\documentclass[reqno,11pt]{amsart}
\usepackage{setspace,tikz,xcolor,mathrsfs,listings,multicol,amssymb}
\usepackage{rotating}
\usepackage[vcentermath]{youngtab}
\usepackage[margin=1in,includefoot,footskip=30pt]{geometry}
\usepackage{enumerate}
\usepackage{booktabs}
\usepackage[centertableaux]{ytableau}
\usepackage[all,cmtip]{xy}
\usetikzlibrary{arrows,matrix}
\tikzset{tab/.style={matrix of math nodes,column sep=-.35, row sep=-.35,text height=7pt,text width=7pt,align=center,inner sep=2,font=\footnotesize}}

\usepackage[colorlinks=true, pdfstartview=FitV, linkcolor=blue, citecolor=blue, urlcolor=blue]{hyperref}


\newcommand{\fsl}{\mathfrak{sl}}
\newcommand{\gl}{\mathfrak{gl}}

\newcommand{\Sym}{\Sigma}
\newcommand{\field}{\mathbf{k}}
\newcommand{\GL}{\operatorname{GL}}
\newcommand{\Or}{\operatorname{O}}

\newcommand{\natrepr}{\mathbf{V}}

\newcommand{\iso}{\cong}

\newcommand{\abs}[1]{\left\lvert #1 \right\rvert}

\DeclareRobustCommand{\stirling}{\genfrac\{\}{0pt}{}}
\newcommand{\divides}{\mid}

\newcommand{\ZZ}{\mathbb{Z}}
\newcommand{\QQ}{\mathbb{Q}}
\newcommand{\RR}{\mathbb{R}}
\newcommand{\CC}{\mathbb{C}}

\newcommand{\mcA}{\mathcal{A}}
\newcommand{\mcB}{\mathcal{B}}
\newcommand{\mcC}{\mathcal{C}}
\newcommand{\mcE}{\mathcal{E}}
\newcommand{\mcG}{\mathcal{G}}
\newcommand{\mcI}{\mathcal{I}}
\newcommand{\mcL}{\mathcal{L}}
\newcommand{\mcM}{\mathcal{M}}
\newcommand{\mcP}{\mathcal{P}}
\newcommand{\mcQ}{\mathcal{Q}}
\newcommand{\mcR}{\mathcal{R}}
\newcommand{\mcS}{\mathcal{S}}
\newcommand{\mcT}{\mathcal{T}}
\newcommand{\mcU}{\mathcal{U}}

\DeclareMathOperator{\End}{End}
\DeclareMathOperator{\Span}{span}

\definecolor{darkred}{rgb}{0.7,0,0} 
\newcommand{\defn}[1]{{\color{darkred}\emph{#1}}} 

\definecolor{UQgold}{RGB}{196, 158, 54} 
\definecolor{UQpurple}{RGB}{73, 7, 94} 
\definecolor{UMNgold}{RGB}{255,200,46} 
\definecolor{UMNmaroon}{RGB}{106,0,50} 
\definecolor{OCUenji}{RGB}{153,0,51} 
\definecolor{OCUsapphire}{RGB}{0,51,102} 

\usepackage{listings}
\lstdefinelanguage{Sage}[]{Python}
{morekeywords={False,sage,True},sensitive=true}
\lstset{
  frame=single,
  showtabs=False,
  showspaces=False,
  showstringspaces=False,
  commentstyle={\ttfamily\color{dgreencolor}},
  keywordstyle={\ttfamily\color{dbluecolor}\bfseries},
  stringstyle={\ttfamily\color{dgraycolor}\bfseries},
  language=Sage,
  basicstyle={\footnotesize\ttfamily},
  aboveskip=0.75em,
  belowskip=0.75em,
  xleftmargin=.15in,
}
\definecolor{dblackcolor}{rgb}{0.0,0.0,0.0}
\definecolor{dbluecolor}{rgb}{0.01,0.02,0.7}
\definecolor{dgreencolor}{rgb}{0.2,0.4,0.0}
\definecolor{dgraycolor}{rgb}{0.30,0.3,0.30}

\theoremstyle{plain}
\newtheorem{thm}{Theorem}[section]
\newtheorem{lemma}[thm]{Lemma}
\newtheorem{conj}[thm]{Conjecture}
\newtheorem{prop}[thm]{Proposition}
\newtheorem{cor}[thm]{Corollary}
\newtheorem{problem}[thm]{Problem}
\theoremstyle{definition}
\newtheorem{dfn}[thm]{Definition}
\theoremstyle{remark}
\newtheorem{ex}[thm]{Example}
\newtheorem{remark}[thm]{Remark}
\numberwithin{equation}{section}


\usepackage[colorinlistoftodos]{todonotes}

\setlength{\marginparwidth}{2cm}

\title[Cellular partition subalgebras]{Cellular subalgebras of the partition algebra}

\author[T.~Scrimshaw]{Travis Scrimshaw}
\address[T.~Scrimshaw]{Department of Mathematics, Hokkaido University, 5 Ch\=ome Kita 8 J\=onishi, Kita Ward, Sapporo, Hokkaid\=o 060--0808, Japan}
\email{tcscrims@gmail.com}
\urladdr{https://tscrim.github.io}

\keywords{cellular algebra, centralizer algebra, complex reflection group}
\subjclass[2010]{05E10, 16G99, 05E16, 05A19}

\begin{document}

\begin{abstract}
We describe various diagram algebras and their representation theory using cellular algebras of Graham and Lehrer and the decomposition into half diagrams.
In particular, we show the diagram algebras surveyed here are all cellular algebras and parameterize their cell modules.
We give a new construction to build new cellular algebras from a general cellular algebra and subalgebras of the rook Brauer algebra that we call the cellular wreath product.
\end{abstract}

\maketitle

\tableofcontents

\section{Introduction}

Schur--Weyl duality is a classical result in representation theory due to Issai Schur~\cite{Schur27} and publicized by Hermann Weyl~\cite{Weyl39} that relates the irreducible representations of the symmetric group $\Sym_n$ with that of the general linear group $\GL_m(\CC)$.
If we take the natural representation $\natrepr := \CC^m$ of $\GL_m(\CC)$, then there is a natural $\Sym_n$-action on $\natrepr^{\otimes n}$ that permutes the factors.
Since $\GL_m(\CC)$ acts diagonally on $\natrepr^{\otimes n}$, this commutes with the $\Sym_n$-action.
Schur--Weyl duality is the statement that the image of the representation afforded by the $\Sym_n$-action is everything that commutes with the $\GL_m(\CC)$-action and vice versa.
Consequently, for $m \geq n$, we have that
\[
\natrepr^{\otimes n} \iso \bigoplus_{\lambda \vdash n} S^{\lambda} \boxtimes V(\lambda)
\]
as $(\Sym_n \times \GL_m(\CC))$-representations, where $S^{\lambda}$ (resp.~$V(\lambda)$) is the irreducible $\Sym_n$ (resp.~$\GL_m(\CC)$) representation indexed by the partition $\lambda$.
In particular, the decomposition is multiplicity free.
For more information, we refer the reader to~\cite{EGHLSVY11,FH91,Howe95}.
(Additionally, the actual endomorphism algebra $S(m,n) = \End_{\Sym_n} \natrepr^{\otimes n}$ is the well-studied Schur algebra; see, \textit{e.g.},~\cite{Green07}.)

If we have a subgroup $H \subseteq \GL_m(\CC)$, then $\natrepr$ is an $H$-representation by restriction.
Subsequently, the endomorphism algebra $\End_H \natrepr^{\otimes n}$ could potentially be larger (and usually is).
As a first case, we take $H = \Sym_m$.
To describe the endomorphism algebra, we will use the partition algebra $\mcP_n(\beta)$ introduced independently by Jones~\cite{Jones94} and Martin~\cite{Martin94}, which is connected to the study of the Potts model from statistical mechanics (see also, \textit{e.g.},~\cite{Martin91,Martin96,Martin00}).
The partition algebra has been well-studied from different perspectives and applications; see, \textit{e.g.},~\cite{BDVO15,HR05,HL05,HJ20,GL96,MR98,Xi99}.
In particular, it is the prototypical diagram algebra whose diagrams are a graphical representations of set partitions of $\{1,\dotsc,n\} \sqcup \{1',\dotsc,n'\}$.

This leads to a combinatorial description of the centralizer algebras for groups $\Sym_m \subseteq H \subseteq \GL_m(\CC)$.
One well-studied group is $H = \Or_n(\CC)$ the orthogonal group.
The subalgebra of the partition algebra is named the Brauer algebra after Brauer for his initial work on it, in particular for proving the corresponding Schur--Weyl duality~\cite{Brauer37}.
This also (essentially) covers the case of the symplectic group $H = Sp_{2n}(\CC)$ (see, \textit{e.g.},~\cite[Thm.~B6.3]{BR99}).
Another case is $H = G(r,p,m)$ by Tanabe~\cite{Tanabe97}, where $G(r,p,m)$ is the infinite family of complex reflection groups in the Shepard--Todd classification~\cite{ST54}.
In particular $G(2,1,m)$ is the Weyl group of $\Or_{2m+1}(\CC)$ consisting of signed permutations, with the centralizer algebra studied by Orellana~\cite{Orellana05,Orellana07}.
Another case that has been studied is the centralizer algebra of $G(r,1,m)$ for $r \geq n$~\cite{AMM21,Kosuda00,Kosuda06,OSSZ21}.

On the other hand, we can construct various subalgebras of $\mcP_n(\beta)$ by imposing combinatorial restrictions on the diagrams and study the corresponding algebras.
One of the most famous subalgebras is the Temperley--Lieb algebra first introduced by Temperley and Lieb in~\cite{TL71} in the context of the Potts model, which was rediscovered by Jones in his work on subfactors (\textit{e.g.},~\cite{Jones83,Jones85,Jones87}) and linked to knot theory.
The Temperley--Lieb algebra is a well-studied algebra with an extensive literature (see, \textit{e.g.},~\cite{BdGN01,BM05,BW89,GdlHJ89,GW93,Kauffman87,Martin91,PRdGN02,PRZ06,RSA14}) and generalizations (such as~\cite{BSA18,DG22,MS94,ILZ18,MW00,LRH20}).
Some facts include an analog of Schur--Weyl duality with the quantum group $U_q(\gl_2)$ and that it is a quotient of the Hecke algebra of $\Sym_m$~\cite{Jones87}.
Furthermore, it is (essentially) equivalent to the planar partition algebra (see, \textit{e.g.},~\cite{Jones94,HR05} and Problem~\ref{prob:TL_planar_zero}).
Other subalgebras that have been considered, such as the half partition algebra~\cite{HR05}, quasi-partition algebra~\cite{DO14}, rook algebra~\cite{Munn57,Solomon02}, and planar rook algebra~\cite{FHH09}.
For most of these cases, an analog of Schur--Weyl duality has been constructed.

Let us digress slightly by looking at the diagram algebras where blocks have size at most $2$.
For these cases, a new phenomenon can appear.
We can have a second independent parameter $\gamma$ that counts the number of simply-connected interior components when composing diagrams and $\beta$ counts the number of loops.
Specifically, these algebras are the rook Brauer algebra~\cite{DEG17,DG22II,HdM14,MM14}, Motzkin algebra~\cite{BH14,DEG17}, and partial Temperley--Lieb algebra~\cite{DG22}.
When $\gamma = \beta$, these are naturally a subalgebra of $\mcP_n(\beta)$, and they are isomorphic for all $\gamma \neq 0$ by rescaling the diagrams (see, \textit{e.g.},~\cite[Lemma~7.3]{DG22II}).
Hence, generically we can consider them to be diagrammatic subalgebras of $\mcP_n(\beta)$ up to rescaling.
Nothing seems to be known for the degenerate cases of $\beta = 0$ (resp.~$\beta = 0$) and $\gamma \neq 0$ (resp.~$\gamma \neq 0$).

In this note, we will survey a number of results on the representation theory of the partition algebra $\mcP_n(\beta)$ and some of its diagram subalgebras.
Our main tool will be using is the theory of cellular algebras introduced by Graham and Lehrer~\cite{GL96}.
We will then use the decomposition of the partition algebra of Xi~\cite{Xi99}, which is particularly amenable to combinatorics and diagrammatic subalgebra.
Thus, the representation theory of the algebras in large part reduces the study to that of the symmetric group.
In particular, we will be following the approach of half diagrams, given for the specific case of the Temperley--Lieb algebra under the name of (quotients of) link modules (see, \textit{e.g.},~\cite{GL96,Martin90,Martin91,RSA14}.
We remark that many of the algebras presented here (see Table~\ref{table:algebras} for many examples) are known to be cellular~\cite{BH14,DG22,GL96,Xi99}.
For nearly all of the other cases considered, they are possibly known by experts to be cellular even if they have not been explicitly written down.
More specifically, the centralizer algebra of $G(r,1,m)$ for $r \geq 2$ and $m \geq n$, the (planar) rook algebra, and half partition algebra can be shown to be cellular as a consequence of the Proposition~\ref{prop:subcellular} (which is immediate from the definition of a cellular algebra) and the partition algebra being cellular.
Yet not every case follows quite so easily.
In particular, to show that the (planar) quasi-partition algebra is cellular (Theorem~\ref{thm:quasi_cellular} and Theorem~\ref{thm:quasi_partition_cellular}) requires more technical analysis, which the author believes to be new.

Additionally, we remark that the techniques used in other papers (\textit{e.g.}~\cite{DO14,HJ20,MM14,OSSZ21}) are no less important than the approach taken here; on the contrary, they often provide more refined descriptions and can be better adapted to addressing other questions such as characters.
Yet, the diagrammatic approach with cellular algebras taken here allows us to describe the cell modules (and, in principle, the simple modules) and the algebra action naturally in terms of tableau more uniformly in terms of inducing representations of Young subgroups of $\Sym_n$.
On the other hand, the approach taken here allows us to study these diagram algebras over arbitrary fields $\field$ (not necessarily $\CC$) and discuss the semisimplicity of a number of these algebras over arbitrary fields and describe their irreducible modules.
However, we do not perform this analysis here as each such algebra not previously treated deserves its own detailed paper.
It is the hope of the author that this paper helps facilitate easier translations between the combinatorial and algebraic information.

\begin{table}
\begin{center}
\begin{tabular}{ccccl}
\toprule
name & notation & planar & propagating & block sizes \\
\midrule
partition & $\mcP_n(\beta)$ & no & no & any size \\
half partition & $\mcP_{n-1/2}(\beta)$ & no & $n \sim n'$ & any size \\
quasi-partition & $\mcQ\mcP_n(\beta)$ & no & no & $> 1$ \\
$G(r,d,m)$-centralizer & $\mcG_n^{(r,d,m)}(\beta)$ & no & no & based on $r$ \\
uniform block & $\mcU_k$ & no & yes & top equals bottom \\
rook Brauer & $\mcR\mcB_n(\beta)$ & no & no & $\leq 2$ \\
rook & $\mcR_n(\beta)$ & no & if size $2$ & $\leq 2$ \\
Brauer & $\mcB_n(\beta)$ & no & no & $= 2$ \\
symmetric group & $\field [\Sym_n]$ & no & yes & $= 2$ \\
planar partition & $\mcP\mcP_n(\beta)$ & yes & no & any size \\
Temperley--Lieb & $\mcT\mcL_n(\beta)$ & yes & no & $= 2$ \\
Motzkin & $\mcM_n(\beta)$ & yes & no & $\leq 2$ \\
partial TL & $\mcP\mcT\mcL_n(\beta)$ & yes & no & $\leq 2$ and balanced \\
planar quasi-partition & $\mcP\mcQ\mcP_n(\beta)$ & yes & no & $> 1$ \\
planar rook & $\mcP\mcR_n(\beta)$ & yes & if size $2$ & $\leq 2$ \\
planar $r$-color & $\mcP\mcC_{r,n}(\beta)$ & yes & no & based on $r$ \\
\bottomrule
\end{tabular}
\end{center}
\caption{A list of some subalgebras of the partition algebra and a summary of the conditions imposed on the indexing diagrams.}
\label{table:algebras}
\end{table}

To illustrate this, we consider the uniform block (permutation) algebra $\mcU_n$ studied in~\cite{OSSZ21}.
This is spanned by the set diagrams that represent permutations of blocks of equal size.
We show that a number of the constructions given in~\cite{OSSZ21} can be described as coming from the cellular structure of the partition algebra $\mcP_n(\beta)$ restricted to $\mcU_n$.
As a consequence, we are able to show that $\mcU_n$ is a cellular algebra and describe its simple representations (as far as we understand the representations of the symmetric group) over an arbitrary field (in~\cite{OSSZ21}, they only considered it as a $\CC$-algebra).
For another example, let us consider the description using multiset-valued tableaux, which are tableau filled with multisets with a given total ordering under the usual standard condition,\footnote{
The standard condition here means we restrict to set-valued tableaux.
Morevoer, the (multi)set-valued tableaux here are different than the those in the K-theory of the Grassmannian considered in~\cite{HS20,LP07,PP16}, which have different semistandard conditions, weights, and generating functions.}
for the $\mcP_n(\beta)$ irreducible representations.
This has appeared (sometimes implicitly) under a few different names in many different papers~\cite{BH19,BHH17,HJ20,MR98,OSSZ21,OZ21}.
We can see this as a rephrasing of the decomposition of~\cite{Xi99}, which breaks the diagrams into an upper part, lower part, and a middle permutation part using the propagating blocks, using a simple generalization of the blocks-with-defects approach (see Section~\ref{sec:partition_algebra}).
In both of these cases, we are just inducing a Specht module of a Young subgroup $\Sym_k$ and the exactly $k$ defects in the half diagrams corresponds to the multiset that we use to fill the tableau.
Moreover, the Robinson--Schensted--Knuth (RSK) algorithm~\cite{Robinson35,Schensted61,Knuth70} provides the link between tableaux and permutations (of the multisets) as for the symmetric group, with this link used explicitly in~\cite{COSSZ20}.
The restriction of this decomposition was given for the Temperley--Lieb algebra and Brauer algebra in~\cite{GL96}, which appeared as early as the work of Brown~\cite{Brown56}.
For more detailed treatments of (most of) the general approach in this paper, see~\cite{BH14} for the Motzkin algebra and~\cite{DG22} for the partial Temperley--Lieb algebra. 

Let us discuss the new results in this paper.
As previously mentioned, we show a number of diagram algebras are cellular and describe their cell modules.
Many of these diagram algebras are likely already known to experts but not written down, but we believe the (planar) quasi-partition algebra is new (Theorem~\ref{thm:quasi_cellular} and Theorem~\ref{thm:quasi_partition_cellular}).
A combinatorially interesting fact is that the cell modules of the planar quasi-partition algebra are given by the triangle Riordan numbers.
We provide a number of new formulas for the dimensions of the cell modules of the (planar) $G(r,1,m)$-centralizer algebra, including an appearance of the Fuss--Catalan numbers (Proposition~\ref{prop:planar_color_zero_defect_dim}); see Section~\ref{sec:planar_even} and Section~\ref{sec:planar_color}.
Another novel result in this paper (Theorem~\ref{thm:wreath_product}) is a general construction to construct new cellular algebras from a general cellular algebra and subalgebras of the rook Brauer algebra.
We call this the wreath product of cellular algebras.
This is the common generalization of the papers~\cite{RX04,ZC06} and also yields another proof of the cellularity of $\field [G(r,1,m)]$ (assuming the $r = 1$ case for the symmetric group).
We expect this can be extended to having the ``base'' be an algebra with Hecke type relations (\textit{e.g.}, BMW algebras), giving another proof of the cellularity~\cite{GL96} of the Ariki--Koike algebra~\cite{AK94}.

In the process of trying to describe the various algebras and their irreducible modules, we came across a number of questions that we have included for the interested reader to pursue.
These include corner cases where the behavior seems to differ at an algebraic level, but perhaps not within their representation theory (Problem~\ref{prob:two_param_rook_brauer_semisimple}, Problem~\ref{prob:TL_planar_zero}, Problem~\ref{prob:two_param_motzkin}).
Other examples include combinatorial questions that might lead to interesting relations (\textit{e.g.}, Problem~\ref{prob:irrep_dim_CE}).
There could also be new diagram-type algebras constructed by mixing different constraints and understood using the techniques in this paper.
For example, a blob algebra~\cite{MS94} (see also Section~\ref{sec:blob_algebra}) version of the Motzkin algebra, or putting other cellular algebras on the leftmost strands (which could be seen as a subalgebra of the wreath product).

Next, we mention a number of known Schur--Weyl duality statements in Table~\ref{table:schur_weyl}, although this is likely not exhaustive of those involving subalgebras of the partition algebra.
Let us mention some additional cases not previously discussed.
The first is a Schur--Weyl duality for the Burau representations, both reduced and unreduced, of the braid group $\mathbf{B}_m$ given in~\cite{DG21}.
In this case, the centralizer algebra is given by the rook algebra $\mcR_n([m]_q)$, where $[m]_q = \frac{q^m-1}{q-1}$ are the $q$-analogs of $m$.
Next is the so-called tangle algebra $\mcT_n(1)$ studied in~\cite{BE18}, which we conjecture is equivalent to the planar quasi-partition algebra (Conjecture~\ref{conj:tangle_pqp}).
The centralizer of the twin group $TW_m$ on the natural (type $A$) Hecke algebra representation, which can be considered as the Burau representation, was shown to be the rook Brauer algebra (see Section~\ref{sec:rook_brauer}) in~\cite{DG22II}.
The general Lie superalgebra $\gl(1|1)$ has a Schur--Weyl duality with the planar rook algebra~\cite{BM13}.
The last is the partition algebra Schur--Weyl duality has been extended to the non-semisimple setting over general commutative rings in~\cite{BDM22} with the kernel of the symmetric group action giving a cellular basis in~\cite{BDM22II}.
Similar and some more general results using different techniques were shown in~\cite{Donkin22}.

\begin{table}
\begin{center}
\begin{tabular}{ccc}
\toprule
tensor $T$-action & module $V$ & module $G$-action\\
\midrule
$\CC [\Sym_n]$ & natural $\natrepr \iso \CC^m$ & $\GL_m(\CC)$\\
$\mcG_n^{(r,p,m)}(m)$ & natural $\natrepr \iso \CC^m$ & $\CC [G(r,p,m)]$ \\
$\mcP_{n-1/2}(m)$ & natural $\CC [\Sym_m]$-module & $\CC [\Sym_{m-1}]$ \\
$\mcQ\mcP_n(m)$ & Specht $S^{(n-1.1)}$ & $\CC [\Sym_m]$ \\
$\mcR_n(m)$ & $V(1) \oplus V(0)$ & $\GL_m(\CC)$ \\
$\mcR_n([m]_q)$ & unreduced Burau & $\CC [\mathbf{B}_m]$ \\
$\mcR\mcB_n(m+1)$ & $V(1) \oplus V(0)$ & $\Or_m(\CC)$ \\
$\mcR\mcB_n(m, \beta')$ & Burau $\CC^m$ & $\CC [TW_m]$ \\
$\mcT\mcL_n\bigl(\pm(q+q^{-1})\bigr)$ & natural $V(1) \iso \CC^2$ & $U_q(\fsl_2)$ \\
$\mcM_n\bigl(1\pm(q+q^{-1})\bigr)$ & adjoint $V(1) \oplus V(0)$ & $U_q(\fsl_2)$ \\
$\mcP\mcR_n(\beta)$ & natural $\natrepr^{\otimes n}$ & $U\bigl(\gl(1|1)\bigr)$ \\
$\mcP\mcT\mcL_n\bigl(1\pm(q+q^{-1})\bigr)$ & $V(1) \oplus V(0)$ & $U_q(\gl_2)$ \\
$\mcT_n(1)$ & $V(2)$ & $U(\fsl_2)$ \\
\bottomrule
\end{tabular}
\end{center}
\caption{Some known Schur--Weyl duality statements on the module $V^{\otimes n}$, where $V$ is a (left) $G$-module.}
\label{table:schur_weyl}
\end{table}

We conclude by mentioning some additional references for the interested reader.
There is the survey on combinatorial representation theory by Barcelo and Ram~\cite{BR99} that discusses questions on diagram algebras (among others) from a different perspective.
Another perspective on diagram algebras was considered by Cox, Martin, Parker, and Xi in~\cite{CMPX06}, where they studied the diagram algebras as a tower of algebras through an abstract framework.
Furthermore, they introduce a generalization of the Temperley--Lieb algebra by assigning arrows to each line, which also generalizes the blob algebra.
By considering Schur--Weyl duality with $\GL_m(\CC)$ on $\natrepr^{\otimes n} \otimes (\natrepr^*)^{\otimes k}$, we obtain the walled Brauer algebra, which is a subalgebra of the Brauer algebra originating in the work of Tureav~\cite{Turaev89} and Koike~\cite{Koike89} with its own rich literature; see \textit{e.g.},~\cite{BCHLLS94,BJSH21,CDVDM08,Halverson96,JK20} and references therein.
In particular, Brundan and Stroppel in~\cite{BS12} make a link with their previous work on studying Khovanov's arc algebra and generalizations~\cite{BS10,BS11,BS11III,BS12IV}.
In turn, this is connected with another diagrammatic algebra known as the Khovanov--Lauda--Rouquier (KLR) algebra or quiver Hecke algebra~\cite{KL09,Rouquier08} that has a vast literature too large to even begin to list here, but we simply mention the generalized Schur--Weyl duality functors of Kang, Kashiwara, and Kim~\cite{KKK15,KKK18} (see~\cite{KKOP20,KKOP21} for some recent results related to these functors).
A broader framework of sandwiched cellular algebras was introduced by Tubbenhauer and coauthors~\cite{TM21,Tubbenhauer22,TV21} as a way to generalize Kazhdan--Lusztig cells to general algebras.

\subsection*{Acknowledgements}

The author thanks Andrew Mathas for suggesting to show the uniform block algebra is cellular by using the partition algebra and~\cite{Xi99}, numerous conversions over the years about cellular algebras, and comments on an earlier draft of this paper.
The author thanks Stephen Doty for catching a number of mistakes in an earlier drafts of this paper and providing numerous suggestions, historical comments, and references and deeply appreciates his proofreading.
The author thanks Hyohe Miyachi for the proof of Conjecture~\ref{conj:tangle_pqp} when both algebras are semisimple.
The author thanks Georgia Benkart, Rosa Orellana, Tom Halverson, Arun Ram, J{\o}rgen Rasmussen, David Ridout, Franco Saliola, Anne Schilling, and Mike Zabrocki for useful conversations and suggestions.
The author thanks the anonymous referee for their helpful comments and for the references regarding the braids and ties algebra.

The author was partially supported by Grant-in-Aid for JSPS Fellows 21F51028.
This work was partly supported by Osaka City University Advanced Mathematical Institute (MEXT Joint Usage/Research Center on Mathematics and Theoretical Physics JPMXP0619217849).

\section{Background}
\label{sec:background}

Consider a positive integer $m \in \ZZ_{>0}$.
Let $[m] := \{1 < 2 < \cdots < n\}$ and $[m'] := \{1' < 2' < \cdots < n'\}$.
Let $\field$ denote a field of characteristic $p$ (possibly $p = 0$).
All of the $\field$-algebras considered in this paper will be associative and unital.
Unless otherwise specified, tensor products will be over~$\field$.
Let $\ZZ_r := \ZZ / r \ZZ$, which we will often consider as a cyclic abelian group (under $+$).

Let $\Sym_m$ denote the symmetric group on $[m]$.
For $r, d \in \ZZ_{>0}$ such that $d \divides r$, let $G(r,d,m)$ denote the complex reflection group given by $r$ colored permutations (where multiplication is constructed as the wreath product $\ZZ_r \wr \Sym_m$) such that the sum of the colors is equivalent to $0 \pmod{d}$.
This has a natural representation $\natrepr$ on $\CC^m$ corresponding to products of permutation matrices and diagonal matrices $D(\zeta_r^{i_1}, \dotsc, \zeta_r^{i_m})$ such that $i_1 + \cdots + i_m \equiv 0 \pmod{d}$, where $\zeta_r$ is a primitive $r$-th root of unity.
In particular, the matrices all are generalized permutation matrices, with each row and column having exactly one nonzero entry of the form $\zeta_r^k$.
Note that $G(1,1,m) \iso \Sym_m$ and $G(r,1,m) \iso \ZZ_r \wr \Sym_m$.

A (integer) \defn{partition} $\mu$ of $n$ is a weakly decreasing sequence of positive numbers $(\mu_1 \geq \mu_2 \geq \cdots \geq \mu_{\ell} > 0)$ whose sum $\mu_1 + \cdots + \mu_{\ell} = n$.
We write this as $\mu \vdash n$ or $\abs{\mu} = n$, and let $\ell(\mu) = \ell$ denote its length.
For nonnegative integers $m,n \in \ZZ_{\geq 0}$ and partitions $\mu \in m$ and $\nu \in n$, we say $\mu \leq \nu$ in graded dominance order if $m \leq n$ or if $m = n$ then $\sum_{i=1}^k \mu_i \leq \sum_{i=1}^k \nu_i$ for all $k$ (\textit{i.e.}, usual dominance order), where we extend $\mu$ and $\nu$ with an infinite number of trailing $0$'s.
A \defn{standard tableau} of $\mu$ by a totally ordered alphabet $A$ is a filling of the Young diagram of $\mu$ such that each letter appears exactly once and rows and columns are (strictly) increasing.
We draw our partitions and tableaux using English convention.

Let $f_{\lambda}$ denote the number of standard tableaux for a partition $\lambda$.
It is a classical fact that
\begin{equation}
\label{eq:fla=n2}
\sum_{\lambda \vdash k} f_{\lambda}^2 = k!
\end{equation}
with many proofs; for example, this is a consequence of the Robinson--Schensted--Knuth (RSK) bijection (see, \textit{e.g.},~\cite[Ch.~7]{ECII}).

\medskip
\noindent
\textbf{Caution:} It can be the case that $\lambda$ is \emph{not} a partition in the sequel.


\subsection{Partition algebras}
\label{sec:partition_algebra}

Fix some $\beta \in \field$.
Let $\mcP_n(\beta)$ denote the \defn{partition algebra}, whose basis is indexed by set partitions of $[n] \sqcup [n]'$, which we represent as diagrams from $[n]$ on the top to $[n]'$ on the bottom and identify the parts of the set partition with connected components.
We often identify the basis elements of $\mcP_n(\beta)$ with their defining set partition.
Following composition conventions, multiplication $\rho \cdot \sigma$ of basis elements (diagrams) $\rho$ and $\sigma$ as the set partition formed by stacking the diagram of $\sigma$ on top of $\rho$ and removing the $D$ interior components times $\beta^D$.
We say a block (or part) $\rho_i$ of a basis element $\rho$ is \defn{propagating} if $\rho_i \cap [n] \neq \emptyset$ and $\rho_i \cap [n]' \neq \emptyset$, \textit{i.e.}, it connects to both sides of the diagram.

\begin{ex}
\label{ex:diagram_multiplication}
For the diagrams
\begin{align*}
\rho & = \;
\begin{tikzpicture}[scale = 0.5,thick, baseline={(0,-1ex/2)}] 
\tikzstyle{vertex} = [shape=circle, minimum size=2pt, inner sep=1pt]
\foreach \i in {1,...,8} {
    \node[vertex] (G\i) at (\i, 1) [shape=circle, draw] {};
    \node[vertex] (G-\i) at (\i, -1) [shape=circle, draw] {};
}
\draw[-,blue] (G1) -- (G-2) .. controls +(0, .9) and +(0, .9) .. (G-5) .. controls +(0, .5) and +(0, .5) .. (G-6);
\draw[-,color=darkred] (G2) .. controls +(0, -.5) and +(0, -.5) .. (G3) .. controls +(0, -.75) and +(0, -.75) .. (G5);
\draw[-] (G-1) .. controls +(0, .75) and +(0, .75) .. (G-3) .. controls +(0, .5) and +(0, .5) .. (G-4);
\draw[-,color=OCUsapphire] (G6) -- (G-8);
\end{tikzpicture}
\; = \{{\color{blue}\{1,2',5',6'\}}, {\color{darkred}\{2,3,5\}}, \{4\}, {\color{OCUsapphire}\{6,8'\}}, \{7\}, \{8\}, \{1',3',4'\}, \{7'\}\},
\\[5pt]
\tau & = \;
\begin{tikzpicture}[scale = 0.5,thick, baseline={(0,-1ex/2)}] 
\tikzstyle{vertex} = [shape=circle, minimum size=2pt, inner sep=1pt]
\foreach \i in {1,...,8} {
    \node[vertex] (G\i) at (\i, 1) [shape=circle, draw] {};
    \node[vertex] (G-\i) at (\i, -1) [shape=circle, draw] {};
}
\draw[-,color=darkred] (G-2) .. controls +(-.2, 0.5) and +(0, -1.5) .. (G1) .. controls +(0, -.5) and +(0, -.5) .. (G2) .. controls +(0, -.6) and +(0, -.6) .. (G4);
\draw[-,blue] (G-1) -- (G3) .. controls +(0, -.6) and +(0, -.6) .. (G5) .. controls +(0, -.5) and +(0, -.5) .. (G6);
\draw[-,color=OCUenji] (G-3) .. controls +(0, .75) and +(0, .75) .. (G-5) -- (G8);
\draw[-,color=UQpurple] (G-4) .. controls +(0, .75) and +(0, .75) .. (G-7);
\end{tikzpicture}
 \; = \{{\color{darkred}\{1,2,4,2'\}}, {\color{blue}\{3,5,6,1'\}}, \{7\}, {\color{OCUenji}\{8,3',5'\}}, {\color{UQpurple}\{4',7'\}}, \{6'\}, \{8'\}\},
\end{align*}
we have the multiplication
\begin{align*}
\rho \cdot \tau & = \;
\begin{tikzpicture}[scale = 0.5,thick, baseline={(0,-3.5ex)}] 
\tikzstyle{vertex} = [shape=circle, minimum size=2pt, inner sep=1pt]
\foreach \i in {1,...,8} {
    \node[vertex] (G\i) at (\i, 1) [shape=circle, draw] {};
    \node[vertex] (G-\i) at (\i, -1) [shape=circle, draw] {};
    \node[vertex] (Gp-\i) at (\i, -3) [shape=circle, draw] {};
}
\draw[-,blue] (G-1) -- (Gp-2) .. controls +(0, .9) and +(0, .9) .. (Gp-5) .. controls +(0, .5) and +(0, .5) .. (Gp-6);
\draw[-,color=darkred] (G-2) .. controls +(0, -.5) and +(0, -.5) .. (G-3) .. controls +(0, -.75) and +(0, -.75) .. (G-5);
\draw[-] (Gp-1) .. controls +(0, .75) and +(0, .75) .. (Gp-3) .. controls +(0, .5) and +(0, .5) .. (Gp-4);
\draw[-,color=OCUsapphire] (G-6) -- (Gp-8);
\draw[-,color=darkred] (G-2) .. controls +(-.2, 0.5) and +(0, -1.5) .. (G1) .. controls +(0, -.5) and +(0, -.5) .. (G2) .. controls +(0, -.6) and +(0, -.6) .. (G4);
\draw[-,blue] (G-1) -- (G3) .. controls +(0, -.6) and +(0, -.6) .. (G5) .. controls +(0, -.5) and +(0, -.5) .. (G6);
\draw[-,color=OCUenji] (G-3) .. controls +(0, .75) and +(0, .75) .. (G-5) -- (G8);
\draw[-,color=UQpurple] (G-4) .. controls +(0, .75) and +(0, .75) .. (G-7);
\end{tikzpicture}
\, \Longleftrightarrow \,
\beta^2 \,
\begin{tikzpicture}[scale = 0.5,thick, baseline={(0,-1ex/2)}] 
\tikzstyle{vertex} = [shape=circle, minimum size=2pt, inner sep=1pt]
\foreach \i in {1,...,8} {
    \node[vertex] (G\i) at (\i, 1) [shape=circle, draw] {};
    \node[vertex] (G-\i) at (\i, -1) [shape=circle, draw] {};
}
\draw[-,color=darkred] (G1) .. controls +(0, -.5) and +(0, -.5) .. (G2) .. controls +(0, -.8) and +(0, -.8) .. (G4) .. controls +(0, -.9) and +(0, -.9) .. (G8);
\draw[-,blue] (G-6) .. controls +(0, .5) and +(0, .5) .. (G-5) .. controls +(0, .75) and +(0, .75) .. (G-2) .. controls +(0, 1.5) and +(0, -1.5) .. (G3) .. controls +(0, -.6) and +(0, -.6) .. (G5) .. controls +(0, -.5) and +(0, -.5) .. (G6);
\draw[-] (G-1) .. controls +(0, .75) and +(0, .75) .. (G-3) .. controls +(0, .5) and +(0, .5) .. (G-4);
\end{tikzpicture}
\\ & = \beta^2 \, \{{\color{darkred}\{1,2,4,8\}}, {\color{blue}\{3,5,6,2',5',6'\}}, \{7\}, \{1',3',4'\}, \{7'\}, \{8'\}\}.
\end{align*}
We note that $\rho$ (resp.~$\tau$ and $\rho \cdot \tau$) has $2$ (resp.~$3$ and $1$) propagating blocks.
Likewise, we compute
\begin{align*}
\tau \cdot \rho & = \beta \,
\begin{tikzpicture}[scale = 0.5,thick, baseline={(0,-1ex/2)}] 
\tikzstyle{vertex} = [shape=circle, minimum size=2pt, inner sep=1pt]
\foreach \i in {1,...,8} {
    \node[vertex] (G\i) at (\i, 1) [shape=circle, draw] {};
    \node[vertex] (G-\i) at (\i, -1) [shape=circle, draw] {};
}
\draw[-,blue] (G1) -- (G-1);
\draw[-,color=darkred] (G2) .. controls +(0, -.5) and +(0, -.5) .. (G3) .. controls +(0, -.75) and +(0, -.75) .. (G5);
\draw[-,color=OCUenji] (G-3) .. controls +(0, .75) and +(0, .75) .. (G-5) -- (G6);
\draw[-,color=UQpurple] (G-4) .. controls +(0, .75) and +(0, .75) .. (G-7);
\end{tikzpicture}
\\ & = \beta \, \{{\color{blue}\{1,1'\}}, {\color{darkred}\{2,3,5\}}, \{4\}, \{6, 3', 5'\}, \{7\}, \{8\}, \{2'\}, \{4',7'\}, \{6'\}, \{8'\}\},
\end{align*}
which also has only $1$ propagating block.
\end{ex}

\begin{remark}
Our multiplication convention might be the reverse of some authors, \textit{e.g.},~\cite{HJ20}.
\end{remark}

By~\cite[Lemma~4.2]{Xi99}, we have a decomposition
\begin{equation}
\label{eq:partition_decomposition}
\mcP_n \iso \bigoplus_{k=0}^n V'_k \otimes \field[\Sym_k] \otimes V_k
\end{equation}
as $\field$-modules, where $V_k$ (resp.~$V'_k$) is the free $\field$-module spanned by set partitions $\rho$ of $[n]$ (resp.~$[n]'$) with at least $k$ parts and chosen subset $S \subseteq \rho$ such that $\abs{S} = k$ called the \defn{defects}.
This means we can decompose our natural basis elements of $\mcP_n$ into two set partitions and a permutation encoding the crossings of the connecting blocks using the mapping from $\min(\rho_i \cap [n]) \mapsto \min(\rho_i \cap [n]')$.
Pictorially, this is dividing our diagram for $\rho$ into three parts, the lower set partition on $[n]$, the middle part with the crossings, and the upper set partition on $[n]'$.
Note that in the decomposition, the defects in the lower and upper parts correspond to the propagating blocks.
We call the basis elements of $V'_k$ (and $V_k$) \defn{half diagrams}.

\begin{ex}
\label{ex:decomposition}
If we consider the diagram $\tau$ from Example~\ref{ex:diagram_multiplication} (drawn below using a different realization where the propagation is done from smallest element to smallest element), then this decomposition is given by
\begin{align*}
\tau & \longleftrightarrow \;
\begin{tikzpicture}[scale = 0.5,thick, baseline={(0,-2ex/2)}] 
\tikzstyle{vertex} = [shape=circle, minimum size=2pt, inner sep=1pt]);
\foreach \i in {1,...,8} {
    \node[vertex] (G\i) at (\i, 2) [shape=circle, draw] {};
    \node[vertex] (G-\i) at (\i, -2) [shape=circle, draw] {};
}
\draw[-,color=darkred] (G-2) .. controls +(0, 1.5) and +(0, -1.5) .. (G1) .. controls +(0, -.5) and +(0, -.5) .. (G2) .. controls +(0, -.75) and +(0, -.75) .. (G4);
\draw[-,blue] (G-1) .. controls +(0, 1.5) and +(0, -1.5) .. (G3) .. controls +(0, -.75) and +(0, -.75) .. (G5) .. controls +(0, -.5) and +(0, -.5) .. (G6);
\draw[-,color=OCUenji] (G-5) .. controls +(0, .75) and +(0, .75) .. (G-3) .. controls +(0, 1.5) and +(0, -1.5) .. (G8);
\draw[-,color=UQpurple] (G-4) .. controls +(0, .75) and +(0, .75) .. (G-7);
\draw[dashed,black!50] (0,1) -- (9,1);
\draw[dashed,black!50] (0,-1) -- (9,-1);
\end{tikzpicture}
\hspace{30pt}
\begin{gathered}
(\{\{1,2,4\}, \{3,5,6\}, \{8\}\}, \{\{7\}\}) = v_{\tau}
\\
\begin{bmatrix} 1 & 3 & 8 \\ 2' & 1' & 3' \end{bmatrix} = \sigma_{\tau}
\\
(\{\{1'\},\{2'\},\{3',5'\}\},\{\{4',7'\},\{6'\},\{8'\}\}) = v'_{\tau}
\end{gathered}
\\ & \longleftrightarrow v'_{\tau} \otimes \sigma_{\tau} \otimes v_{\tau} \in V'_3 \otimes \field [\Sym_3] \otimes V_3,
\end{align*}
where the set partition is the union of the pair and the first part is the defects and the permutation is written in two-line notation.
\end{ex}

The following is a classical fact due to Jones.

\begin{thm}[{\cite{Jones94}}]
\label{thm:partition_double_centralizer}
There exists a surjection $\mcP_n(m) \to \End_{\Sym_m} \natrepr^{\otimes n}$.
Furthermore, this map is a bijection if and only if $m \geq 2n$.
\end{thm}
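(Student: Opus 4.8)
The plan is to make the map concrete on diagrams, identify its image with a space of $\Sym_m$-invariant tensors computed via classical invariant theory, and then separate surjectivity (which I expect to hold for all $m$) from injectivity (which is what forces $m \geq 2n$). First I would define the homomorphism on the standard basis. The space $\natrepr^{\otimes n}$ has basis $e_{i_1} \otimes \cdots \otimes e_{i_n}$ indexed by functions $[n] \to [m]$, with $\Sym_m$ acting by simultaneously permuting the values in $[m]$. To a set-partition diagram $d$ of $[n] \sqcup [n]'$ I would assign the operator $T_d$ whose matrix coefficient between a top labeling $\mathbf{i}$ and a bottom labeling $\mathbf{i}'$ is $1$ when the combined labeling of all $2n$ vertices is constant on every block of $d$, and $0$ otherwise. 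The first routine check is that $d \mapsto T_d$ is an algebra homomorphism: stacking diagrams and summing over the labels of the middle row reproduces matrix multiplication, and each of the $D$ interior components removed during composition is summed freely over $[m]$, contributing exactly the factor $m^D = \beta^D$ with $\beta = m$. The second routine check is that $T_d$ commutes with $\Sym_m$, since the condition ``constant on blocks'' is invariant under relabeling $[m]$; thus the image lies in $\End_{\Sym_m} \natrepr^{\otimes n}$.

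Next I would compute the target dimension by invariant theory. Using the self-duality of the permutation representation $\natrepr \iso \natrepr^{*}$, I identify $\End_{\Sym_m} \natrepr^{\otimes n}$ with the invariants $(\natrepr^{\otimes 2n})^{\Sym_m}$. A basis of $\natrepr^{\otimes 2n}$ is indexed by functions $f \colon [2n] \to [m]$, on which $\Sym_m$ acts through the values; the orbits are indexed by the fiber partitions $\pi$ of $[2n]$, and an orbit is nonempty precisely when $\pi$ has at most $m$ blocks. Hence the orbit sums $O_\pi$ form a basis of the invariants, and $\dim \End_{\Sym_m} \natrepr^{\otimes n}$ equals the number of set partitions of $[2n]$ into at most $m$ blocks.

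For surjectivity I would relate the two bases. Viewing $T_d$ as an element of $\natrepr^{\otimes 2n}$, the labelings constant on the blocks of $d$ are exactly the $f$ whose fiber partition $\pi$ is a coarsening of $d$, so $T_d = \sum_{\pi \geq d} O_\pi$, where $O_\pi = 0$ whenever $\pi$ has more than $m$ blocks. This is the zeta transform over the partition lattice; since the partitions with at most $m$ blocks form an up-set, Möbius inversion inside that filter expresses every surviving $O_\pi$ as a combination of the $T_\rho$. Therefore the $T_d$ span the full invariant space, and the map is surjective for every $m$, which establishes the first assertion of the theorem.

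Finally, injectivity reduces to a dimension count: $\dim \mcP_n(m) = B_{2n}$, the total number of set partitions of $[2n]$, whereas the target has dimension equal to the number of such partitions with at most $m$ blocks. These agree if and only if every partition of $[2n]$ has at most $m$ blocks, i.e. if and only if $m \geq 2n$, since the discrete partition into singletons has exactly $2n$ blocks. Combined with surjectivity, the map is a bijection precisely when $m \geq 2n$. I expect the main obstacle to be the surjectivity step: identifying the centralizer with $\Sym_m$-invariant tensors and proving that the diagram operators $T_d$, rather than the orbit sums $O_\pi$ themselves, already span the invariants. This is where the triangularity and Möbius inversion over the partition lattice do the real work, whereas verifying the homomorphism property and performing the final dimension comparison are comparatively mechanical.
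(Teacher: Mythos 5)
Your proposal is correct, and since the paper states this result without proof (citing Jones), the relevant comparison is with the classical argument, which your write-up reconstructs faithfully: defining $T_d$ by the constant-on-blocks matrix coefficients, checking the homomorphism property with the $m^D$ factor from interior components, identifying $\End_{\Sym_m} \natrepr^{\otimes n}$ with $(\natrepr^{\otimes 2n})^{\Sym_m}$ via self-duality, expanding $T_d = \sum_{\pi \geq d} O_{\pi}$ over coarsenings, and applying M\"obius inversion inside the filter of partitions with at most $m$ blocks is exactly the standard double-centralizer proof of Jones (also presented in Halverson--Ram). Your separation of surjectivity (valid for all $m$) from injectivity (the dimension count $B_{2n}$ versus the number of set partitions of $[2n]$ with at most $m$ blocks, which agree precisely when $m \geq 2n$) is likewise how the classical proof proceeds, so there is nothing to fix.
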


An explicit bijection between the Bratteli diagram approach for the general theory of $\End_{\Sym_m} \natrepr^{\otimes n}$ and the partition algebra when $m \geq 2n$ was constructed in~\cite{COSSZ20}.

\begin{remark}
The partition algebra has a nice set of generators (and relations) that involve fairly simple diagrams.
While these are useful to prove a number of facts (such as isomorphisms), we generally will not use them.
Consequently, we do not describe such presentations, but they can be found in the references.
\end{remark}

\subsection{Cellular algebras}

We give the necessary definitions following~\cite{GL96}.

\begin{dfn}[{Cellular algebra~\cite{GL96}}]
\label{defn:cellular_algebra}
Let $\mcA$ be a (unital associative) $\field$-algebra with an anti-involution~$\iota$.
Let $\Lambda$ be a finite poset.
Let $M = (M(\lambda) \mid \lambda \in \Lambda)$, where $M(\lambda)$ is a finite set.
Let
\[
C = \{C^{\lambda}_{ST} \mid \lambda \in \Lambda; S,T \in M(\lambda)\}
\]
be a $\field$-basis for $\mcA$.
We say $\mcA$ is a \defn{cellular algebra} with cell datum $(\Lambda, \iota, M, C)$ if
\begin{enumerate}
\item $\iota(C^{\lambda}_{ST}) = C^{\lambda}_{TS}$ and
\item \label{cell_basis_triangular} for every $\lambda \in \Lambda$, $S,T \in M(\lambda)$, and $a \in \mcA$, we can write
  \[
  a C^{\lambda}_{ST} = \sum_{U \in M(\lambda)} r_a(U,S) C^{\lambda}_{UT} + \mcA^{<\lambda},
  \]
  where $r_a(U,S) \in \field$ do not depend on $T$ and $\mcA^{<\lambda} := \Span_{\field}\{C^{\mu}_{UV} \mid \mu < \lambda; U, V \in M(\mu)\}$ is the module of lower order terms.
\end{enumerate}
The basis $C$ is called a \defn{cell basis} of $\mcA$.
\end{dfn}

From the definition, we have the following result, which is likely well-known to experts but the author could not find in the literature, about certain subalgebras of cellular algebras.

\begin{prop}
\label{prop:subcellular}
Let $\mcA$ be a cellular algebra with cell datum $(\Lambda, \iota, M, C)$.
Let $\overline{\mcA} \subseteq \mcA$ be a subalgebra with a basis $\overline{C} \subseteq C$ invariant under $\iota$.
Then $\overline{\mcA}$ is a cellular algebra with cell datum $(\overline{\Lambda}, \iota|_{\overline{\mcA}}, \overline{M}, \overline{C})$ with $\overline{\Lambda}$ and $\overline{M}$ being the indices that appear in $\overline{C}$.
\end{prop}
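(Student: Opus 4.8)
The plan is to verify the two conditions of Definition~\ref{defn:cellular_algebra} for $\overline{\mcA}$, where for each $\lambda \in \overline{\Lambda}$ I set $\overline{M}(\lambda) = \{S \mid C^{\lambda}_{ST} \in \overline{C} \text{ for some } T\}$ and the proposed cell basis is the rectangular family $\{C^{\lambda}_{ST} \mid \lambda \in \overline{\Lambda},\ S,T \in \overline{M}(\lambda)\}$, which the clause defining $\overline{\Lambda}$ and $\overline{M}$ as the indices appearing in $\overline{C}$ identifies with $\overline{C}$. The first condition is immediate: $\iota$ is an anti-involution of $\mcA$ that by hypothesis preserves the subset $\overline{C}$, so its restriction $\iota|_{\overline{\mcA}}$ is an anti-involution of $\overline{\mcA}$, and the relation $\iota(C^{\lambda}_{ST}) = C^{\lambda}_{TS}$ is inherited from $\mcA$ since both sides lie in $\overline{C}$. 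I would note that the $\iota$-invariance of $\overline{C}$ is exactly what makes $\overline{M}(\lambda)$ symmetric in its two roles, i.e.\ $S$ occurs as a first subscript if and only if it occurs as a second subscript.

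For the triangular multiplication condition, I would fix $a \in \overline{\mcA}$ and $C^{\lambda}_{ST} \in \overline{C}$ and begin from the expansion that already holds in the ambient algebra $\mcA$, namely $a C^{\lambda}_{ST} = \sum_{U \in M(\lambda)} r_a(U,S)\, C^{\lambda}_{UT} + \mcA^{<\lambda}$ with the scalars $r_a(U,S)$ independent of $T$. The single key observation is that $a C^{\lambda}_{ST}$ already lies in $\overline{\mcA}$, because $\overline{\mcA}$ is a subalgebra, and is therefore a $\field$-linear combination of $\overline{C}$. Since $\overline{C} \subseteq C$ and coordinates in the basis $C$ are unique, the displayed $C$-expansion of $a C^{\lambda}_{ST}$ can only involve basis elements that already belong to $\overline{C}$.

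From this uniqueness I would read off both halves of the conclusion. In the leading term, any $U$ with $r_a(U,S) \neq 0$ forces $C^{\lambda}_{UT} \in \overline{C}$ and hence $U \in \overline{M}(\lambda)$, so the sum may be restricted to $U \in \overline{M}(\lambda)$ without altering it, and the coefficients $r_a(U,S)$, carried over verbatim from $\mcA$, remain independent of $T$. In the remainder, every $C^{\mu}_{UV}$ occurring with $\mu < \lambda$ must lie in $\overline{C}$, whence $\mu \in \overline{\Lambda}$ and $U,V \in \overline{M}(\mu)$; thus the remainder lies in $\overline{\mcA}^{<\lambda} = \Span_{\field}\{C^{\mu}_{UV} \mid \mu < \lambda;\ U,V \in \overline{M}(\mu)\}$. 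This is precisely condition~\ref{cell_basis_triangular} for $\overline{\mcA}$.

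The step demanding the most care, rather than the most work, is the bookkeeping that keeps every term inside $\overline{C}$, which rests entirely on uniqueness of basis coordinates together with closure of $\overline{\mcA}$ under multiplication. I would also make explicit the one structural point that the argument silently uses: that $\overline{C}$ coincides with the rectangular family $\{C^{\lambda}_{ST} \mid \lambda \in \overline{\Lambda},\ S,T \in \overline{M}(\lambda)\}$, so that $\overline{\mcA}^{<\lambda}$ is genuinely spanned by elements of $\overline{C}$ and the cell datum $(\overline{\Lambda}, \iota|_{\overline{\mcA}}, \overline{M}, \overline{C})$ is well-formed. This is the reason the hypothesis demands that $\overline{C}$ be a sub-basis drawn from $C$ and invariant under $\iota$, and not merely an arbitrary spanning set of $\overline{\mcA}$.
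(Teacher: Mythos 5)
Your proposal is correct and is essentially the paper's own argument: the paper states Proposition~\ref{prop:subcellular} without proof, calling it immediate from the definition, and your verification (the restriction of $\iota$, plus uniqueness of coordinates in the basis $C$ together with closure of $\overline{\mcA}$ under multiplication forcing every term of $a C^{\lambda}_{ST}$, leading and lower-order alike, to lie in $\overline{C}$) is exactly that immediate argument written out. One small caution on your final remark: the identification of $\overline{C}$ with the rectangular family $\{C^{\lambda}_{ST} \mid \lambda \in \overline{\Lambda},\ S,T \in \overline{M}(\lambda)\}$ is not a consequence of $\iota$-invariance and closure under multiplication (the diagonal subalgebra of $M_2(\field)$ with its matrix-unit cell basis satisfies both yet omits the off-diagonal units), so it is best regarded as an implicit hypothesis of the proposition — one that does hold for every diagram subalgebra to which the paper applies it.
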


From the triangularity property~(\ref{cell_basis_triangular}) in the definition, we have the following way to construct new cellular bases that, similar to Proposition~\ref{prop:subcellular}, is likely well-known to experts.

\begin{prop}
\label{prop:cellular_triangle_basis}
Let $\mcA$ be a cellular algebra with cell datum $(\Lambda, \iota, M, C)$.
Then any basis of the form
\[
\widetilde{C} = \{ \widetilde{C}_{ST}^{\lambda} \in C_{ST}^{\lambda} + \mcA^{< \lambda} \mid \lambda \in \Lambda; S,T \in M(\lambda) \}
\]
is a cell basis of $\mcA$ and defines new cell datum $(\Lambda, \widetilde{\iota}, M, \widetilde{C})$ with $\widetilde{\iota}(\widetilde{C}_{ST}^{\lambda}) = \widetilde{C}_{TS}^{\lambda}$.
\end{prop}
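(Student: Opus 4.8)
The plan is to treat the passage from $C$ to $\widetilde C$ as a single $\field$-linear change of basis $P$ with $P(C^\lambda_{ST}) = \widetilde C^\lambda_{ST}$, which is \emph{unitriangular} with respect to the filtration of $\mcA$ by the cellular ideals $\mcA_\Gamma := \Span_\field\{C^\mu_{UV} \mid \mu \in \Gamma\}$, indexed by the down-sets $\Gamma \subseteq \Lambda$. First I would record two consequences of $\widetilde C^\lambda_{ST} - C^\lambda_{ST} \in \mcA^{<\lambda}$: inverting this relation by downward induction on the poset writes each $C^\lambda_{ST}$ as $\widetilde C^\lambda_{ST}$ plus a $\field$-combination of $\widetilde C^\mu_{UV}$ with $\mu < \lambda$, so $\widetilde C$ is again a basis and $\mcA_\Gamma = \Span_\field\{\widetilde C^\mu_{UV} \mid \mu\in\Gamma\}$ for every down-set $\Gamma$. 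In particular the subspaces $\mcA^{<\lambda}$ are computed identically in either basis and remain (two-sided, hence left) ideals, using only that $\{\mu \mid \mu < \lambda\}$ is a down-set.

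With the filtration under control, I would next verify the triangularity axiom~(\ref{cell_basis_triangular}) for $\widetilde C$ \emph{keeping the original structure constants}. Writing $\widetilde C^\lambda_{ST} = C^\lambda_{ST} + x$ with $x \in \mcA^{<\lambda}$ and taking any $a \in \mcA$, expand $a\,\widetilde C^\lambda_{ST} = a\,C^\lambda_{ST} + a x$; the hypothesis gives $a\,C^\lambda_{ST} \equiv \sum_{U} r_a(U,S)\,C^\lambda_{UT} \pmod{\mcA^{<\lambda}}$, while $a x \in \mcA^{<\lambda}$ since $\mcA^{<\lambda}$ is a left ideal. Replacing each $C^\lambda_{UT}$ by $\widetilde C^\lambda_{UT}$ modulo $\mcA^{<\lambda}$ (first paragraph) yields $a\,\widetilde C^\lambda_{ST} \equiv \sum_U r_a(U,S)\,\widetilde C^\lambda_{UT} \pmod{\mcA^{<\lambda}}$ with the \emph{same} coefficients $r_a(U,S)$, still independent of $T$. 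This argument uses nothing about $\iota$ and so is valid for an arbitrary perturbation.

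The remaining, and genuinely delicate, assertion is that $(\Lambda,\widetilde\iota,M,\widetilde C)$ is a cell datum, i.e.\ that the index-swap map $\widetilde\iota$, defined by $\widetilde\iota(\widetilde C^\lambda_{ST}) = \widetilde C^\lambda_{TS}$ and extended $\field$-linearly, is an algebra anti-involution. Here $\widetilde\iota = P\iota P^{-1}$, so $\widetilde\iota^2 = \mathrm{id}$ is automatic and the whole issue is anti-multiplicativity. My plan is to reduce this to a cleaner statement: setting $\theta := \iota\widetilde\iota$, a one-line manipulation using $\iota(u)\iota(v) = \iota(vu)$ shows that $\widetilde\iota$ is anti-multiplicative \emph{iff} $\theta$ is an algebra homomorphism. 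Moreover $\theta(\widetilde C^\lambda_{ST}) \equiv \widetilde C^\lambda_{ST} \pmod{\mcA^{<\lambda}}$, so $\theta$ is itself unitriangular, and I would attempt to prove it is multiplicative by downward induction on the filtration.

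I expect this last step to be the main obstacle, and the inductive attempt clarifies exactly what is needed: $\theta$ is the identity (hence trivially multiplicative) precisely when $\iota(\widetilde C^\lambda_{ST}) = \widetilde C^\lambda_{TS}$, that is, when the perturbation is $\iota$-equivariant, in which case $\widetilde\iota = \iota$ and the triangularity of the second paragraph finishes the proof. Some such compatibility on $\widetilde C$ is moreover unavoidable, since any anti-involution must fix the unit $1 \in \mcA$ whereas the bare index-swap map need not: in $\mcT\mcL_3(\beta)$, whose top (three--through-strand) cell is the singleton $\{1\}$ sitting above a single lower cell with $\abs{M} = 2$, the admissible perturbation $\widetilde C^{\mathrm{top}} = 1 + C^{\mu}_{12}$ gives $\widetilde\iota(1) = 1 + C^{\mu}_{12} - C^{\mu}_{21} \neq 1$. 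Thus I would present the cell-datum conclusion as holding for exactly those perturbations satisfying $\iota(\widetilde C^\lambda_{ST}) = \widetilde C^\lambda_{TS}$, and flag that the unconstrained formulation requires this hypothesis in order for $\widetilde\iota$ to be an anti-involution at all.
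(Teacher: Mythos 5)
Your second paragraph is exactly the paper's proof: the paper verifies only Definition~\ref{defn:cellular_algebra}(\ref{cell_basis_triangular}), using precisely the two ingredients you isolate, namely that $\mcA^{<\lambda}$ is a left ideal (so $ax \in \mcA^{<\lambda}$) and that the change of basis is unitriangular (so each $C^{\lambda}_{UT}$ may be replaced by $\widetilde{C}^{\lambda}_{UT}$ modulo $\mcA^{<\lambda}$, keeping the same coefficients $r_a(U,S)$). Your first paragraph just makes the unitriangularity honest by inverting it along down-sets; up to that point you and the paper coincide.

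Your third and fourth paragraphs identify a genuine gap --- but it is in the paper's proof, not in your argument. The paper opens with ``we only need to show Definition~\ref{defn:cellular_algebra}(\ref{cell_basis_triangular}) holds'' and never checks that the index-swap map $\widetilde{\iota}$ is an algebra anti-involution, which Definition~\ref{defn:cellular_algebra} requires of any cell datum. Your $\mcT\mcL_3(\beta)$ example is valid: the perturbation $\widetilde{C}^{\mathrm{top}} = 1 + C^{\mu}_{12}$ is admissible since $C^{\mu}_{12} \in \mcA^{<\mathrm{top}}$, and then $\widetilde{\iota}(1) = \widetilde{\iota}\bigl(\widetilde{C}^{\mathrm{top}} - \widetilde{C}^{\mu}_{12}\bigr) = 1 + C^{\mu}_{12} - C^{\mu}_{21} \neq 1$, whereas any anti-automorphism of a unital algebra fixes $1$. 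So the proposition as literally stated is too strong: one needs your compatibility hypothesis $\iota(\widetilde{C}^{\lambda}_{ST}) = \widetilde{C}^{\lambda}_{TS}$ (which forces $\widetilde{\iota} = \iota$, after which your second paragraph finishes the proof), or some independent argument that $\widetilde{\iota}$ is anti-multiplicative; your reduction to the unitriangular map $\theta = \iota\widetilde{\iota}$ being an algebra homomorphism is a correct reformulation of exactly what must be supplied, and your example shows it can fail. One downstream consequence worth flagging beyond what you wrote: the paper applies this proposition in Theorem~\ref{thm:quasi_cellular} in a situation where, by the example immediately following that theorem, $\widetilde{\iota} \neq \iota$ --- so your equivariance hypothesis fails there, and the anti-automorphy of $\widetilde{\iota}$ for the quasi-partition basis requires its own verification (for instance via the iterated-inflation structure invoked in that proof); it does not follow from the proposition's proof as written.
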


\begin{proof}
We only need to show Definition~\ref{defn:cellular_algebra}(\ref{cell_basis_triangular}) holds.
We have
\[
a \widetilde{C}_{ST}^{\lambda} = a C_{ST}^{\lambda} + a \mcA^{<\lambda} = \sum_{U \in M(\lambda)} r_a(U,S) C_{UT} + \mcA^{<\lambda} = \sum_{U \in M(\lambda)} r_a(U,S) \widetilde{C}_{UT} + \mcA^{<\lambda},
\]
since $\mcA^{<\lambda}$ is a left ideal by Definition~\ref{defn:cellular_algebra} (see also~\cite{KX96}) and the change of basis $C \to C'$ is unitrangular.
\end{proof}

For a cellular algebra $\mcA$ with cell datum $(\Lambda, \iota, M, C)$, the \defn{cell module} (or \defn{standard module}) indexed by $\lambda \in \Lambda$ is the free $\field$-module
\[
W(\lambda) := \Span_{\field} \{ C_S \mid S \in M(\lambda) \}
\]
with the natural action $a C_S = \sum_U r_a(U,S) C_U$.
Roughly speaking, the action is given by fixing a $\lambda$ and forgetting one of the indices of the basis.
Hence, $\dim W(\lambda) = \abs{M(\lambda)}$.
An immediate consequence of these definitions is
\begin{equation}
\label{eq:dim_formula}
\dim \mcA = \sum_{\lambda \in \Lambda} \bigl( \dim W(\lambda) \bigr)^2.
\end{equation}

We define a bilinear form $\Phi_{\lambda} \colon W(\lambda) \times W(\lambda) \to \field$ by
\[
C^{\lambda}_{ST} C^{\lambda}_{UV} \equiv \Phi_{\lambda}(C_T, C_U) C^{\lambda}_{SV} \pmod{\mcA^{<\lambda}}
\]
and extended bilinearly.
The bilinear form $\Phi_{\lambda}$ is symmetric and $\mcA$-invariant in the sense $\Phi_{\lambda}(a v, w) = \Phi_{\lambda}(v, \iota(a) w)$ for all $a \in \mcA$ and $v,w \in W(\lambda)$.
This allows us to construct the simple modules.

\begin{thm}[{\cite{GL96}}]
The simple modules of a cellular algebra with cell datum $(\Lambda, \iota, M, C)$ are parameterized by $\{ \lambda \in \Lambda \mid \Phi_{\lambda} \neq 0 \}$.
Moreover, the (absolutely) simple module
\[
L(\lambda) \iso W(\lambda) / R(\lambda),
\]
where $R(\lambda) := \{w \in W(\lambda) \mid \Phi_{\lambda}(v,w) = 0 \text{ for all } v \in W(\lambda)\}$ is the radical of $\Phi_{\lambda}$.
\end{thm}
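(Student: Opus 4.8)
The plan is to follow the classical Graham--Lehrer argument, deriving everything from the multiplication rule together with the $\mcA$-invariance of the form $\Phi_\lambda$. First I would record the two facts that drive the whole proof. From the defining congruence $C^\lambda_{ST} C^\lambda_{UV} \equiv \Phi_\lambda(C_T,C_U) C^\lambda_{SV} \pmod{\mcA^{<\lambda}}$ together with the recipe $a C_S = \sum_U r_a(U,S) C_U$ for the cell-module action, one reads off that inside $W(\lambda)$ the element $C^\lambda_{ST}$ acts by
\[
C^\lambda_{ST} \cdot w = \Phi_\lambda(C_T, w)\, C_S
\]
for every $w \in W(\lambda)$, extended bilinearly from the basis. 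Second, using invariance $\Phi_\lambda(av,w) = \Phi_\lambda(v,\iota(a)w)$ one checks that $R(\lambda)$ is an $\mcA$-submodule: if $\Phi_\lambda(v,w)=0$ for all $v$, then $\Phi_\lambda(v,aw) = \Phi_\lambda(\iota(a)v,w) = 0$ as well. Hence $L(\lambda) = W(\lambda)/R(\lambda)$ is a genuine module and $\Phi_\lambda$ descends to a nondegenerate form on it.

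Next I would show that $\Phi_\lambda \neq 0$ forces $L(\lambda)$ to be absolutely simple. The displayed action formula shows at once that any submodule $U \subseteq W(\lambda)$ with $U \not\subseteq R(\lambda)$ equals all of $W(\lambda)$: picking $w \in U$ and $T$ with $\Phi_\lambda(C_T,w) \neq 0$, the operator $C^\lambda_{ST}$ sends $w$ to a nonzero multiple of $C_S$, recovering every $C_S$ inside $U$. Thus $R(\lambda)$ is the unique maximal submodule and $L(\lambda)$ is simple. For absolute simplicity, choose $S,T$ with $c := \Phi_\lambda(C_T,C_S) \neq 0$; then $e := c^{-1} C^\lambda_{ST}$ acts on $L(\lambda)$ as a rank-one idempotent with image $\field\, \overline{C_S}$. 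Any $\phi \in \End_\mcA(L(\lambda))$ commutes with $e$, hence preserves the line $\field\, \overline{C_S}$, so $\phi(\overline{C_S}) = \mu\, \overline{C_S}$; cyclicity of $\overline{C_S}$ (from simplicity) then gives $\phi = \mu\,\mathrm{id}$, so $\End_\mcA(L(\lambda)) = \field$.

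For completeness, given any simple $L$ I would choose an order ideal $\Gamma \subseteq \Lambda$ minimal for the property $\mcA_\Gamma L \neq 0$, where $\mcA_\Gamma = \Span_\field\{C^\nu_{PQ} \mid \nu \in \Gamma\}$ is the two-sided ideal attached to $\Gamma$. Let $\lambda$ be a maximal element of $\Gamma$; then $\Gamma \setminus \{\lambda\}$ is a strictly smaller order ideal, so by minimality it annihilates $L$, and since it contains $\{\nu \mid \nu < \lambda\}$ we get $\mcA^{<\lambda} L = 0$. As $\mcA_\Gamma L \neq 0$ while $\mcA_{\Gamma\setminus\{\lambda\}} L = 0$, some $C^\lambda_{ST}$ acts nonzero, say $C^\lambda_{ST} v \neq 0$. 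The map $\psi \colon W(\lambda) \to L$, $C_U \mapsto C^\lambda_{UT} v$, is then $\mcA$-linear — the only potential obstruction is a term in $\mcA^{<\lambda}$, which kills $v$ — and nonzero, hence surjective. Pushing the action formula through $\psi$ shows $\Phi_\lambda \not\equiv 0$, for otherwise $C^\lambda_{ST}$ would act as $0$ on $L$; thus $L(\lambda)$ is simple, $\ker\psi$ is the unique maximal submodule $R(\lambda)$, and $L \iso L(\lambda)$.

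Finally, distinctness: for the order ideal $\Gamma_\mu = \{\nu \mid \nu \not\ge \mu\}$ the ideal $\mcA_{\Gamma_\mu}$ annihilates $W(\mu)$ (the same computation that shows $\mcA^{<\lambda}$ annihilates $W(\lambda)$), so if $C^\lambda_{ST}$ acts nonzero on $L(\mu)$ then $\lambda \notin \Gamma_\mu$, i.e.\ $\lambda \ge \mu$. An isomorphism $L(\lambda) \iso L(\mu)$ transports the rank-one idempotent of $L(\lambda)$, forcing $\lambda \ge \mu$, and symmetrically $\mu \ge \lambda$, so $\lambda = \mu$. The step I expect to be the main obstacle is completeness: the delicate point is arranging $\mcA^{<\lambda} L = 0$ so that $\psi$ is genuinely a module map, which requires the order-ideal bookkeeping (a minimal order ideal with nonzero action, then extracting a maximal element) rather than a naive ``maximal $\lambda$ acting nonzero,'' precisely because $\Lambda$ need not be totally ordered; the same need to handle incomparable elements through the ideals $\mcA_\Gamma$ is what makes the distinctness argument run.
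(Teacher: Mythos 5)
Your proof is correct: it is precisely the classical Graham--Lehrer argument --- the action formula $C^{\lambda}_{ST} \cdot w = \Phi_{\lambda}(C_T, w)\, C_S$ on the cell module, the identification of $R(\lambda)$ as the unique maximal submodule when $\Phi_{\lambda} \neq 0$, the rank-one idempotent giving $\End_{\mcA}(L(\lambda)) = \field$ and hence absolute simplicity, the order-ideal bookkeeping ($\mcA_{\Gamma}$ for a minimal order ideal with $\mcA_{\Gamma} L \neq 0$) to arrange $\mcA^{<\lambda} L = 0$ for completeness, and the comparison $\lambda \geq \mu$, $\mu \geq \lambda$ for distinctness. The paper gives no proof of its own here (the theorem is quoted directly from~\cite{GL96}), so your reconstruction is exactly the argument of the cited source and there is nothing it diverges from.
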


\begin{remark}
\label{rem:algebraic_closure}
A particular feature of this result is that for a cellular algebra, it is equivalent to work over the algebraic closure of $\field$, which is also necessarily cellular.
In other words, the semisimplicity and the set of simple modules is the same for a cellular algebra over $\field$ as its algebraic closure, which we can get by extension of scalars.
This is the definition of an absolutely simple module.

Note that this is not mean we can consider any subfield of $\field$ and the algebra remains cellular.
In particular, the group algebra $\field [\ZZ_n]$ when $\zeta_n \in \field$ (\textit{i.e.}, $\field$ has all $n$-th roots of unity) has $n$ irreducible representations (all of dimension $1$), but otherwise we can find fewer (nonsplit) irreducible representations of higher dimension.
For example, consider $n = 4$ and compare when $\field = \CC$ and when $\field = \RR$ (which has a simple $2$ dimensional module with the generator of $\ZZ_4$ acting as rotation by $\pi/2$).
Consequently, $\RR [\ZZ_4]$ is not cellular but $\CC [\ZZ_4]$ is (see Section~\ref{sec:wreath} below).
\end{remark}

\begin{remark}
\label{rem:assoc_graded}
From the definition of a cellular algebra, there is a natural $\Lambda$-filtration of left ideals on any cellular algebra $\mcA$; that is $\mcA = \bigcup_{\lambda \in \Lambda} \mcA^{\leq \lambda}$ such that $\mcA^{\leq \lambda} \subseteq \mcA^{\leq \mu}$ whenever $\lambda \leq \mu$ in the poset $\Lambda$ and $\mcA \cdot \mcA^{\leq \lambda} \subseteq \mcA^{\leq \lambda}$ for any fixed $\lambda \in \Lambda$.
It is clear $\iota(\mcA^{\leq \lambda}) = \mcA^{\leq \lambda}$, and so the left ideal $\mcA^{\leq \lambda}$ is a two-sided ideal.
The cell modules are the ``square root by $\iota$'' of the direct summands of the associated graded module of the (left) regular representation with respect to this filtration; more precisely, $\mcA = \bigoplus_{\lambda \in \Lambda} \mcA^{\leq \lambda} / \mcA^{< \lambda}$ and $\mcA^{\leq \lambda} / \mcA^{< \lambda} \iso W(\lambda) \otimes \iota\bigl( W(\lambda) \bigr)$.
This is essentially the basis-free definition of cellular algebras given in~\cite{KX96} as the cell modules are left ideals of $\mcA^{\leq \lambda}$.
The construction of the link modules for the Temperley--Lieb algebra (see, \textit{e.g.},~\cite{GL96,RSA14}) is similar by looking at a maximal $\Lambda$-indexed chain of left ideals.
\end{remark}

We note that $\field [\Sym_k]$ has an explicit construction of a cellular basis with $M(\lambda)$ given as the set of standard Young tableaux of shape $\lambda \vdash k$; see~\cite{GL96}.
By using a cellular basis for $\field [\Sym_k]$ and the decomposition~\eqref{eq:partition_decomposition}, we have the following.

\begin{thm}[{\cite[Thm.~4.1]{Xi99}}]
\label{thm:partition_cellular}
The partition algebra $\mcP_n(\beta)$ is a cellular algebra with cell datum $(\Lambda, \iota, M, C)$ given by
\begin{itemize}
\item $\Lambda = \{\lambda \vdash k \mid k \in \{0,1,2,\dotsc,n\}\}$ under graded dominance order;
\item $\iota$ reflecting the diagram vertically;
\item $M(\lambda)$ is all pairs $(\rho, T)$, where $\rho$ is a set partition of $[n]$ such that $\abs{\rho} \geq \abs{\lambda}$, where $\abs{\rho}$ denotes the number of parts of $\rho$ as usual, and $T$ is a standard tableau of shape $\lambda$ in the alphabet $A \subseteq \rho$ such that $\abs{A} = \abs{\lambda}$ under some total order on the subsets of $[n]$;
\item some cell basis $C$.
\end{itemize}
\end{thm}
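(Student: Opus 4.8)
The plan is to \emph{inflate} the Murphy cellular basis of each symmetric group algebra $\field[\Sym_k]$ along the half-diagram decomposition~\eqref{eq:partition_decomposition}. Recall from~\cite{GL96} that $\field[\Sym_k]$ carries a cell datum whose poset is the set of partitions of $k$ under dominance order, whose anti-involution is $\sigma \mapsto \sigma^{-1}$, and whose index sets are the standard tableaux of shape $\lambda \vdash k$; write $m^{\lambda}_{st}$ for its cell basis, so $\iota(m^{\lambda}_{st}) = m^{\lambda}_{ts}$. After fixing once and for all a total order on the subsets of $[n]$, any size-$k$ defect set $A \subseteq \rho$ inherits a total order, and a standard tableau $T$ of shape $\lambda$ on the alphabet $A$ standardizes to an ordinary standard tableau $\overline{T}$; this standardization is a bijection.

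First I would make~\eqref{eq:partition_decomposition} explicit on basis elements: a diagram is recorded as a triple $(v, \sigma, v')$ with lower half-diagram $v = (\rho, A) \in V_k$, permutation $\sigma \in \Sym_k$, and upper half-diagram $v' = (\rho', A') \in V'_k$, where $k$ is the number of propagating blocks. Replacing the middle factor by the Murphy basis, I would define
\[
C^{\lambda}_{(\rho,S),(\rho',T)} := (\rho, A) \otimes m^{\lambda}_{\overline{S}\,\overline{T}} \otimes (\rho', A'),
\]
where $S$ (resp.~$T$) is a standard tableau of shape $\lambda$ on the defect alphabet $A$ (resp.~$A'$). Since the standardization maps are bijections and the $m^{\lambda}_{st}$ form a basis of $\field[\Sym_k]$, the collection $C$ is a $\field$-basis of $\mcP_n(\beta)$, and $M(\lambda)$ is precisely the set of pairs $(\rho, T)$ in the statement. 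The anti-involution axiom is then immediate: vertical reflection exchanges the two half-diagrams and inverts the middle permutation, sending $(\rho,A) \otimes m^{\lambda}_{\overline{S}\,\overline{T}} \otimes (\rho',A')$ to $(\rho',A') \otimes m^{\lambda}_{\overline{T}\,\overline{S}} \otimes (\rho,A) = C^{\lambda}_{(\rho',T),(\rho,S)}$.

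The substance, and the step I expect to be the main obstacle, is the triangularity axiom~\ref{cell_basis_triangular}. Given $a \in \mcP_n(\beta)$, I would analyze $a \cdot C^{\lambda}_{(\rho,S),(\rho',T)}$ by stacking diagrams, using the key diagrammatic fact that the number of propagating blocks cannot increase under multiplication. If some propagating block is destroyed, the product has fewer than $k$ propagating blocks and so lies in $\mcA^{<\lambda}$, because in graded dominance order every partition of size $<k$ is strictly below $\lambda$. If the propagation number is preserved at $k$, then every top defect stays connected through to the bottom, so the upper index $(\rho',T)$ is carried along unchanged, while the action on the pair (lower half-diagram, middle permutation) factors as the action of $a$ on $V_k$ tensored with left multiplication in $\field[\Sym_k]$. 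Applying the triangularity of the Murphy basis to the middle factor yields
\[
a \cdot C^{\lambda}_{(\rho,S),(\rho',T)} = \sum_{(\rho'',S')} r_a\bigl((\rho'',S'),(\rho,S)\bigr)\, C^{\lambda}_{(\rho'',S'),(\rho',T)} + \mcA^{<\lambda},
\]
where the lower-order terms absorb both the propagation-reducing contributions and the Murphy terms indexed by partitions $\mu < \lambda$ of the same size $k$. Crucially, the coefficients depend only on $a$ and the lower index $(\rho,S)$, since the upper index $(\rho',T)$ is a passive spectator in a left multiplication, which is exactly what Definition~\ref{defn:cellular_algebra}(\ref{cell_basis_triangular}) demands.

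The one point demanding genuine care is verifying that the ``preserved $k$'' action really factors through $\field[\Sym_k]$ compatibly with the Murphy triangular structure; equivalently, that $\mcP_n(\beta)$ is an iterated inflation of the algebras $\field[\Sym_k]$ along the modules $V_k$ in the sense of~\cite{KX96}. Once this factorization is established, the theorem follows by importing the symmetric group cellularity layer by layer in $k$, with the graded dominance order providing the required global poset and the destruction of propagating blocks providing the transitions between layers.
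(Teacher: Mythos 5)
Your proposal is correct and takes essentially the same route as the paper: the paper obtains this theorem by citing Xi, whose proof is exactly the inflation of a cellular basis of $\field[\Sym_k]$ along the half-diagram decomposition~\eqref{eq:partition_decomposition}, i.e., exhibiting $\mcP_n(\beta)$ as an iterated inflation of symmetric group algebras in the sense of~\cite{KX96}. The verification you single out as the delicate point---that multiplication preserving the propagation number genuinely factors through the inflation structure---is precisely the part of Xi's argument that needed the small correction of~\cite{GP18}, which the paper also acknowledges.
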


\subsection{Cellular partition theory}
\label{sec:crt}

Recall that the characteristic of $\field$ is $p$.
An important consequence~\cite[Cor.~4.11]{Xi99} is we can parameterize the simple modules of $\mcP_n(\beta)$ as those $\lambda \in \Lambda$ that are a $p$-partition of size $\delta_{\beta0} \leq k \leq n$.
Next, we reinterpret the description of the cell modules from~\cite[Cor.~4.10]{Xi99} using interpretation given in~\cite{HL05} with the half diagram description.
Indeed, the basis for $W(\lambda)$ is given by $M(\lambda)$, where we use the total order given on subsets of $[n]$ by comparing the smallest value in each part.\footnote{Some authors use the largest value, such as in~\cite{BH19,HJ20,OSSZ21}. This is inconsequential and can be considered as using a different cellular basis.}
Note that the elements in the standard tableau are the defects, and the cell module comes with a natural $\mcP_n(\beta)$-action.
We note that the $\mcP_n(\beta)$-action cannot increase the number of defects, and since we want the number of defects to remain fixed, any element of $\mcP_n(\beta)$ that decreases the number of defects acts by $0$.
This translates to an action of the natural basis on the set partition and tableau pair, where we can apply the Garnir straightening relations.
Hence, two alternative descriptions~\cite{BDVK15,Xi99} of the cell module for $\lambda \vdash k$ are
\begin{equation}
\label{eq:specht_decomposition}
W(\lambda) \iso V'_k \otimes S^{\lambda} \otimes v_k \iso V'_{n,k} \otimes_{\field [\Sym_k]} S^{\lambda},
\end{equation}
where $S^{\lambda}$ is the Specht module of $\Sym_k$, $v_k$ is a fixed vector in $V_k$, and $V'_{n,k}$ is the $\field$-span of the diagrams with $k$ propagating blocks and $i$, for all $1 \leq i \leq n-k$, a singleton block.

From the half diagram description, we can write some formulas for the dimension of $V_k'$, which then yields $\dim W(\lambda) = f_{\lambda} \dim V'_k$.
Let $\stirling{a}{b}$ denote the Stirling number of the second kind counting the number of set partitions of $[a]$ into $b$ parts.
Define $B_a = \sum_{b=0}^a \stirling{a}{b}$ as the $a$-th Bell number that counts the number of set partitions of $[a]$,
We claim
\[
\dim V'_k = \sum_{j=k}^n \binom{n}{j} \stirling{j}{k} B_{n-j} = \sum_{j=k}^n \binom{j}{k} \stirling{n}{j}.
\]
The first formula comes from choosing a subset of size $j$ for all of the defect blocks and then taking a set partition on the remaining elements, which was proven bijectively in~\cite[Thm.~2.4]{CDDSY07} (see also~\cite{CD02,DS95,Roby91,Roby95}).
The second formula is from choosing $k$ parts of a set partition of $n$ (with exactly $j$ parts) to be the defects.

When $\field$ is a field of characteristic $0$ and $\beta \notin \{0,1,2,\dotsc,2n-1\}$, the cell modules are the set of simple modules from the double-centralizer theory with $\Sym_m$ (see, \textit{e.g.},~\cite{HR05,MS94II}).
Alternatively, vacillating tableaux or, equivalently, chains in the Bratelli diagram give dimension formulas for the cell modules.

We can give a necessary pictorial condition for the bilinear form $\Phi_{\lambda}(C_T, C_U) = 0$ as the diagram formed by reflecting $U$ vertically and connecting it with $T$ does not induce a bijection between the defects of $T$ and $U$.
Indeed, this will cause the number of propagating blocks (which equals the number of defects on the top and bottom) in the result to decrease and corresponds to a multiplication by $0$; see also Remark~\ref{rem:assoc_graded}.
On the other hand, if we have a bijection, then it induces a permutation in $\Sym_k$, which is the one coming from the decomposition of the diagrams for $\mcP_n(\beta)$.
Furthermore, $\Phi_{\lambda}(C_T, C_U)$ is equal to $\beta^D$, where $D$ is the number of interior components not containing a defect, times the bilinear form induced from the module $S^{\lambda}$ of $\field [\Sym_k]$ evaluated at the corresponding induced permutation.

\begin{ex}
Consider the elements $v_{\tau}$ and $v'_{\tau}$ from Example~\ref{ex:decomposition}, which identify with elements $C_U$ and $C_T$, respectively, in a cell module $W(\lambda)$, where $\abs{\lambda} = 3$.
Then the corresponding pairing $\Phi_{\lambda}(C_T, C_U) = \Phi_{\lambda}(C_U, C_T) = 0$ as we do not have a bijection between the defects:
\[
\begin{tikzpicture}[scale = 0.5,thick, baseline=10pt] 
\tikzstyle{vertex} = [shape=circle, minimum size=2pt, inner sep=1pt]
\foreach \i in {1,...,8} {
    \node[vertex] (G\i) at (\i, 2) [shape=circle, draw] {};
}
\draw[-,color=darkred] (G1) + (0,-1) -- (G1) .. controls +(0, -.5) and +(0, -.5) .. (G2) .. controls +(0, -.6) and +(0, -.6) .. (G4);
\draw[-,blue] (G3) + (0,-1) -- (G3) .. controls +(0, -.6) and +(0, -.6) .. (G5) .. controls +(0, -.5) and +(0, -.5) .. (G6);
\draw[-,color=blue] (G5) .. controls +(0, .75) and +(0, .75) .. (G3) -- ++(0,1);
\draw[-,color=darkred] (G4) .. controls +(0, .75) and +(0, .75) .. (G7);
\draw[-,color=darkred] (G1) -- ++(0,1);
\draw[-,color=darkred] (G2) -- ++(0,1);
\draw[-,color=UQpurple] (G8) -- ++(0,-1);
\end{tikzpicture}
\]
where we have reflected $T$ instead of $U$.
However, if we instead took
\[
U' = (\{\{2'\},\{3',5'\},\{8'\}\},\{\{1'\},\{4',7'\},\{6'\}\}),
\]
then for $\Phi_{\lambda}(C_T, C_{U'})$ we do obtain a bijection:
\[
\begin{tikzpicture}[scale = 0.5,thick, baseline=10pt] 
\tikzstyle{vertex} = [shape=circle, minimum size=2pt, inner sep=1pt]
\foreach \i in {1,...,8} {
    \node[vertex] (G\i) at (\i, 2) [shape=circle, draw] {};
}
\draw[-,color=darkred] (G1) + (0,-1) -- (G1) .. controls +(0, -.5) and +(0, -.5) .. (G2) .. controls +(0, -.6) and +(0, -.6) .. (G4);
\draw[-,blue] (G3) + (0,-1) -- (G3) .. controls +(0, -.6) and +(0, -.6) .. (G5) .. controls +(0, -.5) and +(0, -.5) .. (G6);
\draw[-,color=blue] (G5) .. controls +(0, .75) and +(0, .75) .. (G3) -- ++(0,1);
\draw[-,color=darkred] (G4) .. controls +(0, .75) and +(0, .75) .. (G7);
\draw[-,color=UQpurple] (G8) -- ++(0,1);
\draw[-,color=darkred] (G2) -- ++(0,1);
\draw[-,color=UQpurple] (G8) -- ++(0,-1);
\end{tikzpicture}
\]
which induces the identity permutation and is scaled by $\beta^0$.
Finally, if we take
\[
U'' = (\{\{3',5',6'\},\{4'\},\{8'\}\},\{\{1'\},\{2'\},\{7'\}),
\]
then for $\Phi_{\lambda}(C_T, C_{U'})$ obtain
\[
\begin{tikzpicture}[scale = 0.5,thick, baseline=10pt] 
\tikzstyle{vertex} = [shape=circle, minimum size=2pt, inner sep=1pt]
\foreach \i in {1,...,8} {
    \node[vertex] (G\i) at (\i, 2) [shape=circle, draw] {};
}
\draw[-,color=darkred] (G1) + (0,-1) -- (G1) .. controls +(0, -.5) and +(0, -.5) .. (G2) .. controls +(0, -.6) and +(0, -.6) .. (G4);
\draw[-,blue] (G3) + (0,-1) -- (G3) .. controls +(0, -.6) and +(0, -.6) .. (G5) .. controls +(0, -.5) and +(0, -.5) .. (G6);
\draw[-,color=blue] (G6) .. controls +(0, .5) and +(0, .5) .. (G5) .. controls +(0, .75) and +(0, .75) .. (G3) -- ++(0,1);
\draw[-,color=darkred] (G4) + (0,1) -- (G4);
\draw[-,color=UQpurple] (G8) -- ++(0,1);
\draw[-,color=UQpurple] (G8) -- ++(0,-1);
\end{tikzpicture}
\]
which induces the simple transposition $(1 \; 2)$ and scaled by $\beta^1$.
\end{ex}

\section{Cellular subalgebras}

For the remainder of this paper $\iota$ will be the automorphism of the partition algebra that reflects the diagram vertically (\textit{i.e.}, sends $i \leftrightarrow i'$ within the set partition) restricted to the subalgebra we are considering.

\subsection{Half integer partition algebra}

We begin with the \defn{half integer partition algebra} $\mcP_{n-1/2}(\beta)$ studied in~\cite{HR05}, which is defined as the subalgebra spanned by all diagrams such that $n$ and $n'$ are in the same part.
As such, the analysis of the representation theory is similar to that of the usual partition algebra with a few small differences.
We have $\dim \mcP_{n-1/2}(\beta) = B_{2n-1}$ since we chose the set partition for $[n-1] \sqcup [n]'$ and the part $n$ belongs to is the same as $n`$.
This is analogous to $\dim \mcP_n(\beta) = B_{2n}$.
By Proposition~\ref{prop:subcellular}, we have the following.

\begin{prop}
The half integer partition algebra $\mcP_{n-1/2}(\beta)$ is cellular.
\end{prop}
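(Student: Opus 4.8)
The plan is to apply Proposition~\ref{prop:subcellular} directly, so the entire task reduces to verifying its two hypotheses for the inclusion $\mcP_{n-1/2}(\beta) \subseteq \mcP_n(\beta)$. First I would observe that the defining spanning set of $\mcP_{n-1/2}(\beta)$—the set partitions of $[n] \sqcup [n]'$ in which $n$ and $n'$ lie in the same block—is literally a \emph{subset} $\overline{C}$ of the natural diagram basis $C$ of $\mcP_n(\beta)$, not merely a spanning set of some abstract subspace. This is the crucial point that makes Proposition~\ref{prop:subcellular} applicable: that proposition requires $\overline{C} \subseteq C$, and here the containment is on the nose because both algebras use honest set-partition diagrams and the condition ``$n \sim n'$'' simply selects some of them.

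Next I would check that $\overline{\mcA} := \mcP_{n-1/2}(\beta)$ is genuinely a subalgebra, i.e.\ closed under the stacking multiplication. If $\rho$ and $\sigma$ both have $n$ and $n'$ in the same block, then in the product $\rho \cdot \sigma$ the bottom vertex $n$ of $\rho$ is connected to the middle vertex $n'$ of $\rho$, which is identified with the top vertex $n$ of $\sigma$, which is in turn connected to the bottom vertex $n'$ of $\sigma$; hence $n$ and $n'$ remain in a common block of the composite. Thus the product lies in $\overline{\mcA}$, and one checks the identity diagram (all strands vertical) satisfies $n \sim n'$, so $\overline{\mcA}$ is a unital subalgebra.

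Then I would verify that $\overline{C}$ is invariant under the anti-involution $\iota$, which reflects a diagram vertically by swapping $i \leftrightarrow i'$. Under this reflection the condition ``$n$ and $n'$ are in the same block'' is manifestly preserved, since it is symmetric in $n$ and $n'$; so $\iota$ permutes $\overline{C}$ among itself. Having confirmed that $\overline{\mcA}$ is a subalgebra with $\iota$-invariant basis $\overline{C} \subseteq C$, Proposition~\ref{prop:subcellular} yields at once that $\mcP_{n-1/2}(\beta)$ is cellular with the inherited cell datum.

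I do not expect any serious obstacle here; the entire argument is an application of an already-established proposition, and the content is purely the bookkeeping above. The one subtlety worth stating explicitly—and the only place one could slip—is that Theorem~\ref{thm:partition_cellular} provides the cell basis $C$ of $\mcP_n(\beta)$ only abstractly (it asserts ``there exists a cellular basis $C$''), so one should note that Proposition~\ref{prop:subcellular} is being applied to \emph{that} cellular basis, and confirm that the diagram subset ``$n \sim n'$'' is compatible with the half-diagram decomposition so that it restricts to a sub-collection of $C$ rather than only the naive diagram basis. Once this compatibility is noted, the proof is immediate.
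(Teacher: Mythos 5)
Your proposal is correct and takes exactly the paper's route: the paper's entire proof of this proposition is a one-line appeal to Proposition~\ref{prop:subcellular}, which is precisely your plan (subalgebra closure, $\iota$-invariance of the diagram set, inherit the cell datum). You are in fact somewhat more careful than the paper, since the compatibility issue you flag at the end --- that the cell basis $C$ of Theorem~\ref{thm:partition_cellular} is not the naive diagram basis, so one must check that the span of the diagrams with $n \sim n'$ is the span of a subset of Xi's cellular basis (which does hold, e.g.\ after ordering defect blocks by largest element so the condition becomes that the middle permutation fixes the last block, a condition respected by the cellular basis of $\field[\Sym_k]$) --- is passed over in silence in the paper.
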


Since $n$ and $n'$ need to be in the same block, the half diagrams on $[n]'$ are now required to have $n'$ as an element of a defect, and so $\Lambda$.
Thus, we see that $\Lambda$ is the set of all partitions $\lambda$ such that $1 \leq \abs{\lambda} \leq n$ and
\[
\dim W(\lambda) = (k \widetilde{V}'_k + \widetilde{V}'_{k-1}) f_{\lambda},
\]
where $k:= \abs{\lambda}$ and $\widetilde{V}'_k$ is the half diagram module for $\mcP_{n-1}(\beta-1)$ (from~\eqref{eq:specht_decomposition}), by choosing the defect block to add $n'$ to or if $n'$ is a defect on its own.

\subsection{Quasi-partition algebra}

The quasi-partition algebra $\mcQ\mcP_n(\beta)$ was defined in~\cite{DO14} as a centralizer algebra with $\beta = m \in \ZZ_{>0}$.
However, we instead define it as a subalgebra of $\mcP_n(\beta - 1)$ using~\cite[Lemma~2.3]{DO14} with a basis indexed by all diagrams without any isolated vertices (\textit{cf.}~\cite[Sec.~2.4]{DO14}).
(Due to the differences in multiplication conventions, we need to flip all diagrams by $\iota$.)
However, the basis is not given by these diagrams alone, but instead as a sum over certain subsets.
In order for this algebra to be well-defined, we require $\beta \neq 0$.
For brevity, we do not include the explicit description of the basis here as we only need the properties given by~\cite[Lemma~2.2]{DO14}.

\begin{thm}
\label{thm:quasi_cellular}
The quasi-partition algebra is a cellular algebra.
\end{thm}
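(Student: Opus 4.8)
The plan is to combine Proposition~\ref{prop:cellular_triangle_basis} with Proposition~\ref{prop:subcellular}: I will exhibit the quasi-partition basis, after a triangular modification of the Xi cell basis of $\mcP_n(\beta-1)$ from Theorem~\ref{thm:partition_cellular}, as an $\iota$-invariant subset of a cell basis of $\mcP_n(\beta-1)$. Recall that $\mcQ\mcP_n(\beta) \subseteq \mcP_n(\beta-1)$ has, by~\cite[Lem.~2.2, Lem.~2.3]{DO14}, a distinguished basis $\{q_\pi\}$ indexed by the set partitions $\pi$ of $[n] \sqcup [n]'$ with no singleton block, where each $q_\pi$ is a prescribed $\field$-linear combination of the diagram basis elements $d_\sigma$ of $\mcP_n(\beta-1)$ (this is where the hypothesis $\beta \neq 0$ enters). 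Since reflecting a diagram through $\iota$ carries a no-singleton set partition to a no-singleton set partition and is compatible with the defining sums, the set $\{q_\pi\}$ is invariant under $\iota$; this is exactly the hypothesis required by Proposition~\ref{prop:subcellular}.

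The technical heart is to analyze the leading term of each $q_\pi$ with respect to the cellular filtration. First I would pass from the diagram basis to the cell basis $C^\lambda_{ST}$, recalling that this change of basis is unitriangular (in dominance order) within each fixed number $k$ of propagating blocks, since it only modifies the middle $\field[\Sym_k]$-factor of the decomposition~\eqref{eq:partition_decomposition} by the symmetric-group cell basis. Using the properties of the sum defining $q_\pi$ from~\cite{DO14}, I would show that $q_\pi$ has a single dominant term: there is a shape $\lambda_\pi \vdash k_\pi$, with $k_\pi$ the number of propagating blocks of $\pi$, and indices $S_\pi, T_\pi \in M(\lambda_\pi)$ such that
\[
q_\pi \in c_\pi\, C^{\lambda_\pi}_{S_\pi T_\pi} + \mcP_n(\beta-1)^{< \lambda_\pi}, \qquad c_\pi \in \field^{\times},
\]
where the lower-order terms absorb every contribution of a $d_\sigma$ with fewer propagating blocks (the singleton corrections only merge or break off blocks, which cannot increase the propagating number). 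Rescaling by $c_\pi^{-1}$, I then need the assignment $\pi \mapsto (\lambda_\pi, S_\pi, T_\pi)$ to be injective and $\iota$-equivariant in the sense that $\iota\pi \mapsto (\lambda_\pi, T_\pi, S_\pi)$; the latter is immediate from $\iota$ exchanging the top and bottom half diagrams, while injectivity amounts to matching the no-singleton set partitions with their leading cell-basis slots.

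Granting this, the proof concludes formally. For each index $(\lambda, S, T)$ arising as a leading slot $(\lambda_\pi, S_\pi, T_\pi)$, set $\widetilde{C}^\lambda_{ST} := c_\pi^{-1} q_\pi$, and for all remaining indices keep $\widetilde{C}^\lambda_{ST} := C^\lambda_{ST}$; choosing the modifications so that $\widetilde{C}^\lambda_{TS} = \iota(\widetilde{C}^\lambda_{ST})$ (possible because $\iota(\mcP_n(\beta-1)^{<\lambda}) = \mcP_n(\beta-1)^{<\lambda}$ and by $\iota$-equivariance of the leading-slot map) keeps the family $\iota$-symmetric. By Proposition~\ref{prop:cellular_triangle_basis} the family $\widetilde{C}$ is a cell basis of $\mcP_n(\beta-1)$ with cell datum $(\Lambda, \iota, M, \widetilde{C})$, and by construction $\{c_\pi^{-1} q_\pi\} \subseteq \widetilde{C}$ is an $\iota$-invariant sub-basis spanning $\mcQ\mcP_n(\beta)$, so Proposition~\ref{prop:subcellular} yields cellularity. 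I expect the main obstacle to be the leading-term computation of the previous paragraph: controlling how the inclusion--exclusion over singletons interacts with both the propagating-block grading and the straightening inside each $\field[\Sym_k]$, and in particular proving that the leading-slot map is \emph{injective} rather than merely well defined.
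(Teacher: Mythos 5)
Your overall route---triangularly modifying Xi's cell basis of $\mcP_n(\beta-1)$ so that the quasi-partition basis becomes an invariant sub-cell-basis, then invoking Proposition~\ref{prop:subcellular}---is the same strategy as the paper's proof, but two of your central claims fail, and they are exactly the points the paper is careful to route around. The first is your claim that the set $\{q_\pi\}$ is invariant under $\iota$. It is not: the example immediately following the theorem in the paper displays the basis element for $\pi = \{\{1,2,1',2'\}\}$ (with $n=2$), whose singleton-splitting corrections occur only on the primed side (terms such as $d_{\{1,2,1'\},\{2'\}}$ and $d_{\{1,2,2'\},\{1'\}}$ appear, but not their reflections $d_{\{1,1',2'\},\{2\}}$ and $d_{\{2,1',2'\},\{1\}}$). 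Since each $q_\pi$ has $d_\pi$ as its unique no-singleton term and these are distinct across $\pi$, the element $\iota(q_\pi)$ cannot equal any $q_{\pi'}$, so the family is not $\iota$-closed. Consequently Proposition~\ref{prop:subcellular} cannot be applied with $\iota$, and your later requirement $\widetilde{C}^{\lambda}_{TS} = \iota(\widetilde{C}^{\lambda}_{ST})$ is unachievable on the slots you fill with the $q_\pi$. This is precisely why the paper's proof replaces $\iota$ by the modified anti-involution $\widetilde{\iota}$ of Proposition~\ref{prop:cellular_triangle_basis}, which swaps the \emph{indices} of the new basis (reflects the indexing diagram) rather than applying $\iota$ to the element itself.

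The second gap is your dominant-term containment $q_\pi \in c_\pi C^{\lambda_\pi}_{S_\pi T_\pi} + \mcP_n(\beta-1)^{<\lambda_\pi}$; the justification ``the corrections cannot increase the propagating number'' is not sufficient, and the containment is in fact false. Splitting a singleton off a propagating block that retains vertices on both rows leaves the number of propagating blocks unchanged, so such a correction lies in the \emph{same} cell layer $\lambda_\pi$ with a different half-diagram index, and $C^{\lambda_\pi}_{S_\pi T'}$ with $T' \neq T_\pi$ does not lie in $\mcP_n(\beta-1)^{<\lambda_\pi}$. (In the example above, $d_{\{1,2,1'\},\{2'\}}$ has one propagating block, the same as $d_\pi$.) The paper's proof handles this with a one-sided observation: every refinement that touches the top half diagram strictly decreases the propagating count and is therefore genuinely lower order, so modulo lower terms the corrections move only the bottom half-diagram index. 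Such a unitriangular but one-sided change of basis within each layer is not covered by Proposition~\ref{prop:cellular_triangle_basis} alone; the paper instead concludes through Xi's iterated-inflation criterion~\cite{Xi99,GP18}, with $\widetilde{\iota}$ as the anti-involution. Repairing your argument thus amounts to replacing $\iota$-invariance by $\widetilde{\iota}$-closure and weakening your containment to this one-sided statement---at which point it becomes the paper's proof.
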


\begin{proof}
By~\cite[Lemma~2.2]{DO14}, the basis of $\mcQ\mcP_n(\beta)$ is triangular when expressed in the basis of $\mcP_n(\beta-1)$ with respect to refinement, denoted $\leq$, of set partitions.
Furthermore, when we refine any block on the top, all of the vertices in that block will become isolated.
Hence, we strictly decrease the number of propagating blocks when we perform any refinements of the top (the element of $V_k$ in the decomposition~\eqref{eq:partition_decomposition}).
Therefore, in terms of~\eqref{eq:partition_decomposition}, we can express any basis element $\widetilde{\rho} \in \mcQ\mcP_n(\beta)$ as an element $\mcP_n(\beta-1)$ by
\begin{equation}
\label{eq:top_expansion}
\widetilde{\rho} = \widetilde{v}'_{\rho} \otimes \widetilde{\sigma}_{\rho} \otimes \widetilde{v}_{\rho} = \sum_{\tau \leq \rho} b_{\tau\rho} (v'_{\tau} \otimes \sigma_{\tau} \otimes \widetilde{v}_{\rho}) + \mcA^{<},
\end{equation}
where $\widetilde{v}_{\rho} = v_{\rho}$ and $\mcA^{<} = \sum_{\lambda \vdash (k-1)} \mcA^{<\lambda}$ with $k$ equaling the number of propagating blocks of $\rho$.
Thus, by the argument of Proposition~\ref{prop:cellular_triangle_basis}, we can ignore these terms in $\mcA^{<}$.
Next, the definition of multiplication in $\mcP_n(\beta-1)$ implies there exists maps $\phi_k \colon \mcQ\mcP_n(\beta) \times V_k \to V_k$ and $\theta_k \colon \mcQ\mcP_n(\beta) \times V_k \to \Sym_k$ such that
\[
a \cdot \widetilde{\rho} = \phi_k(a, \widetilde{v}'_{\tau}) \otimes \theta_k(a, \widetilde{v}'_{\tau}) \widetilde{\sigma}_{\rho} \otimes v_{\rho}.
\]
We use the map $\widetilde{\iota}$ that simply reflects the diagram indexing the basis (like for the usual partition algebra) as the anti-involution (analogous to Proposition~\ref{prop:cellular_triangle_basis}).
That this is an anti-involution follows from~\cite[Eq.~(5)]{DO14} and the fact that flipping the diagram is an anti-involution on $\mcP_n(\beta-1)$.
Hence, this satisfies the conditions for an interated inflation~\cite{GP18,Xi99}.
\end{proof}

\begin{ex}
The map $\widetilde{\iota}$ is not simply the restriction of $\iota$ for $\mcP_n(\beta-1)$ since
\[
\begin{tikzpicture}[scale = 0.5,thick, baseline={(0,-1ex/2)}] 
\tikzstyle{vertex} = [shape=circle, minimum size=2pt, inner sep=1pt]
\foreach \i in {1,2} {
    \node[vertex] (G\i) at (\i, 1) [shape=circle, draw] {};
    \node[vertex] (G-\i) at (\i, -1) [shape=circle, draw] {};
}
\draw[-] (G1) .. controls +(0, -.5) and +(0, -.5) .. (G2) -- (G-2) .. controls +(0, .5) and +(0, .5) .. (G-1) -- (G1);
\end{tikzpicture}
\quad \longleftrightarrow \quad
\begin{tikzpicture}[scale = 0.5,thick, baseline={(0,-1ex/2)}] 
\tikzstyle{vertex} = [shape=circle, minimum size=2pt, inner sep=1pt]
\foreach \i in {1,2} {
    \node[vertex] (G\i) at (\i, 1) [shape=circle, draw] {};
    \node[vertex] (G-\i) at (\i, -1) [shape=circle, draw] {};
}
\draw[-] (G1) .. controls +(0, -.5) and +(0, -.5) .. (G2) -- (G-2) .. controls +(0, .5) and +(0, .5) .. (G-1) -- (G1);
\end{tikzpicture}
- \beta^{-1}\,
\begin{tikzpicture}[scale = 0.5,thick, baseline={(0,-1ex/2)}] 
\tikzstyle{vertex} = [shape=circle, minimum size=2pt, inner sep=1pt]
\foreach \i in {1,2} {
    \node[vertex] (G\i) at (\i, 1) [shape=circle, draw] {};
    \node[vertex] (G-\i) at (\i, -1) [shape=circle, draw] {};
}
\draw[-] (G-1) -- (G1) .. controls +(0, -.5) and +(0, -.5) .. (G2);
\end{tikzpicture}
- \beta^{-1}\,
\begin{tikzpicture}[scale = 0.5,thick, baseline={(0,-1ex/2)}] 
\tikzstyle{vertex} = [shape=circle, minimum size=2pt, inner sep=1pt]
\foreach \i in {1,2} {
    \node[vertex] (G\i) at (\i, 1) [shape=circle, draw] {};
    \node[vertex] (G-\i) at (\i, -1) [shape=circle, draw] {};
}
\draw[-] (G1) .. controls +(0, -.5) and +(0, -.5) .. (G2) -- (G-2);
\end{tikzpicture}
+ \beta^{-2}\,
\begin{tikzpicture}[scale = 0.5,thick, baseline={(0,-1ex/2)}] 
\tikzstyle{vertex} = [shape=circle, minimum size=2pt, inner sep=1pt]
\foreach \i in {1,2} {
    \node[vertex] (G\i) at (\i, 1) [shape=circle, draw] {};
    \node[vertex] (G-\i) at (\i, -1) [shape=circle, draw] {};
}
\draw[-] (G1) .. controls +(0, -.5) and +(0, -.5) .. (G2);
\end{tikzpicture}
+ \beta^{-2}\,
\begin{tikzpicture}[scale = 0.5,thick, baseline={(0,-1ex/2)}] 
\tikzstyle{vertex} = [shape=circle, minimum size=2pt, inner sep=1pt]
\foreach \i in {1,2} {
    \node[vertex] (G\i) at (\i, 1) [shape=circle, draw] {};
    \node[vertex] (G-\i) at (\i, -1) [shape=circle, draw] {};
}
\end{tikzpicture}\, \in \mcP_n(\beta-1).
\]
Indeed, this element is invariant under $\widetilde{\iota}$ but not $\iota$.
\end{ex}

A more from-first-principles proof of Theorem~\ref{thm:quasi_cellular} is also possible.
We can build a cellular basis for $\mcQ\mcP_n(\beta)$ and $\mcP_n(\beta-1)$ by starting with a cellular basis of $\Sym_k$ for all $0 \leq k \leq n$ (RSK allows us to go between the cell basis indices and diagrams).
Then Equation~\eqref{eq:top_expansion} implies that this cellular basis has the analogous triangular expansion in terms of a cellular basis of $\mcP_n(\beta-1)$ that fixes the second index; that is, $\widetilde{C}_{ST}^{\lambda} = \sum_{R \leq S} b_{RS} C_{RT}^{\lambda} + \mcA^{<\lambda}$.
We then would conclude the proof by using that we have a cellular basis of $\mcP_n(\beta-1)$.

Note that $\Lambda$ is given by all partitions of size at most $n$ just like for $\mcP_n(\beta)$, recovering~\cite[Cor.~4.3]{DO14}.
We recover~\cite[Thm.~4.6]{DO14} from a straightforward counting of the half diagrams of the cell modules.

\begin{cor}[{\cite[Thm.~4.6]{DO14}}]
Let $\lambda \in \Lambda$ and $k = \abs{\lambda}$.
Then we have
\[
\dim W(\lambda) = f_{\lambda} \sum_{s=0}^k \binom{n}{s} \sum_{j=k-s}^{\lfloor \frac{n-s}{2} \rfloor} \binom{j}{k-s} \stirling{n-s}{j}_{\geq2},
\]
where $\stirling{a}{b}_{\geq2}$ is the number of set partitions of $a$ into $b$ parts with each part having size at least $2$.
\end{cor}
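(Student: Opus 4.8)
The plan is to reduce the computation to a count of half diagrams via the cellular decomposition~\eqref{eq:specht_decomposition}, which gives $\dim W(\lambda) = f_{\lambda} \dim V'_k$ with $k = \abs{\lambda}$, where $V'_k$ is spanned by the half diagrams on $[n]$ with $k$ defects that actually occur for $\mcQ\mcP_n(\beta)$. Equivalently, the cell basis $M(\lambda)$ consists of pairs (half diagram, standard tableau of shape $\lambda$ on the defect alphabet), so that $\dim W(\lambda)$ equals $f_{\lambda}$ times the number of admissible half diagrams. Thus the whole problem becomes a combinatorial bookkeeping of these half diagrams.

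First I would pin down precisely which half diagrams occur. A half diagram is a set partition of $[n]$ together with a choice of $k$ distinguished blocks, the defects, which become the propagating blocks of the full diagram. A defect block connects to the opposite side of the full diagram, so it is automatically non-isolated no matter its size within the half; hence defects may be singletons. A non-defect block, on the other hand, remains entirely on one side, so the quasi-partition condition (no isolated vertices, after passing to $\mcP_n(\beta-1)$) forces it to have size at least $2$. Therefore the admissible half diagrams are exactly the set partitions of $[n]$ with $k$ chosen defect blocks of any positive size, all remaining blocks having size $\geq 2$.

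Next I would count these by stratifying according to the number $s$ of defect blocks that happen to be singletons. Choosing which $s$ elements of $[n]$ form singleton defects contributes $\binom{n}{s}$. On the remaining $n-s$ elements we place a set partition into $j$ blocks each of size at least $2$, giving the factor $\stirling{n-s}{j}_{\geq 2}$, and among these $j$ blocks we designate $k-s$ of them to be the non-singleton defects, contributing $\binom{j}{k-s}$. The inequalities $k-s \leq j$ (enough blocks to choose defects from) and $2j \leq n-s$ (each block has size $\geq 2$) give the range $k-s \leq j \leq \lfloor (n-s)/2 \rfloor$, and letting $s$ run from $0$ to $k$ and multiplying by $f_{\lambda}$ yields the stated formula.

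The main obstacle is the first step, namely justifying the asymmetric role of defect versus non-defect blocks under the no-isolated-vertex constraint. This requires tracing the decomposition~\eqref{eq:partition_decomposition} through the cell-module description of Theorem~\ref{thm:quasi_cellular}: one must check that the fixed top half diagram $v_k$ is itself admissible and that neither it nor the middle permutation ever produces an isolated vertex, so that the quasi-partition condition localizes on the bottom half diagram exactly as claimed. Once this localization is established, the remaining enumeration is the routine inclusion of the three multiplicative factors above.
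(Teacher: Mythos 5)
Your proposal is correct and takes essentially the same approach as the paper's proof: both reduce via the cellular decomposition~\eqref{eq:specht_decomposition} to counting admissible half diagrams (defect blocks of any size, non-defect blocks of size at least $2$) and then stratify by the number $s$ of singleton defects, producing exactly the factors $\binom{n}{s}$, $\stirling{n-s}{j}_{\geq 2}$, and $\binom{j}{k-s}$. The localization issue you flag at the end is precisely what Theorem~\ref{thm:quasi_cellular} is invoked to settle, so the paper proceeds directly to the enumeration.
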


\begin{proof}
From Theorem~\ref{thm:quasi_cellular}, it is sufficient to count the half diagrams by selecting the defect blocks as the $f_{\lambda}$ comes from the decomposition~\eqref{eq:specht_decomposition}.
We chose exactly $s$ singleton defects, then we chose the remaining $k - s$ defects from the parts of the set partition, which necessarily has no singletons.
\end{proof}

The dimension of the quasi-partition algebra was given in~\cite[Cor.~2.9]{DO14} as
\[
\dim \mcQ\mcP_n(\beta) = \sum_{j=1}^{2n} (-1)^{j-1} B_{2n-j} + 1;
\]
see also~\cite[A000296]{OEIS}.
We can also perform the same analysis like the half partition algebra to get the odd sized base sets, but it would require showing that the restricted basis is closed under multiplication.
If that is true, the Schur--Weyl duality for the half partition algebra suggests a Schur--Weyl duality for the half quasi-partition algebra.

\subsection{Complex reflection group centralizer}

We begin this subsection with the submodule $\mcG^{(r,d,m)}_n(\beta) \subseteq \mcP_n(\beta)$ Tanabe studied in~\cite{Tanabe97} and its relationship with $\End_{G(r,d,m)} \natrepr^{\otimes n}$, the centralizer algebra of $G(r,d,m)$ under the natural diagonal action.
Following~\cite[Lemma~2.1]{Tanabe97}, we \emph{define} $\mcG^{(r,d,m)}_n$ as the submodule of $\mcP_n(\beta)$ with basis given by the set partitions $\rho = \{\rho_1, \rho_2, \dotsc, \rho_{\ell}\}$ such that $\ell \leq m$ and satisfy either of the following conditions:
Denote $N(\rho_i) := \abs{\rho_i \cap [n]}$ and $N'(\rho_i) := \abs{\rho_i \cap [n]'}$
\begin{enumerate}
\item \label{cond:blocks} $N(\rho_i) \equiv N'(\rho_i) \pmod{r}$ for all $i$; or
\item \label{cond:d} $\ell = m$ and
  \begin{enumerate}
    \item \label{cond:diff} $N(\rho_i) \equiv N'(\rho_i) \pmod{r/d}$ for all $i$, and
    \item \label{cond:constant_level} there exists $s \in \{1, 2, \dotsc, r-1\}$ such that $N(\rho_i) - N'(\rho_i) \equiv s \pmod{r}$ for all $i$.
  \end{enumerate}
\end{enumerate}
Note that Condition~(\ref{cond:constant_level}) is stronger than $N(\rho_i) - N'(\rho_i) \not\equiv 0 \pmod{r}$ for all $i$.
We remark that Condition~(\ref{cond:d}) can never be satisfied if $d = 1$ or if $m > 2n$.
Similarly, Condition~(\ref{cond:diff}) becomes vacuous if $d = r$.

In~\cite[Lemma~2.1]{Tanabe97}, it is claimed that $\mcG^{(r,d,m)}_n(m) \iso \End_{G(r,d,m)} \natrepr^{\otimes n}$ as $\field$-algebras (taking $\mcG^{(r,d,m)}$ as a subalgebra), but his claim of linear independence relies on the faithfulness of the $\mcP_n(m)$-representation on $\End_{\Sym_m} \natrepr^{\otimes n}$ from Theorem~\ref{thm:partition_double_centralizer}.
Thus his proof that this does form a basis of the centralizer of the $G(r,d,m)$-action only holds when $m \geq 2n$.
In fact, for $m < n$, $\mcG_n^{(r,1,m)}$ does not necessarily have a unit as the next example shows, and hence it cannot be isomorphic (as $\field$-algebras) to the endomorphism algebra (which is unital).

\begin{ex}
Consider $\mcG_3^{(2,1,2)}(\beta)$ for $\beta \neq 0$, and it can be verified that $\dim \mcG_3^{(2,1,2)}(\beta) = 25$ and $\mcG_3^{(2,1,2)}(\beta)$ is closed under the usual diagram multiplication.
All of its diagrams have either one or two propagating blocks, and we denote the set of diagrams with $i$ propagating blocks by $P_i$.
We have $\abs{P_1} = 16$ and $\abs{P_2} = 9$, and one such diagram in $P_1$ and $P_2$ are
\[
\begin{tikzpicture}[scale = 0.5,thick, baseline={(0,-1ex/2)}] 
\tikzstyle{vertex} = [shape=circle, minimum size=2pt, inner sep=1pt]
\foreach \i in {1,2,3} {
    \node[vertex] (G\i) at (\i, 1) [shape=circle, draw] {};
    \node[vertex] (G-\i) at (\i, -1) [shape=circle, draw] {};
}
\draw[-,color=darkred] (G-2) .. controls +(0,1) and +(0,-1) .. (G3) .. controls +(0, -.5) and +(0, -.5) .. (G2) .. controls +(0, -.5) and +(0, -.5) .. (G1);
\draw[-] (G-3) .. controls +(0., .65) and +(0, .65) .. (G-1);
\end{tikzpicture}\,,
\qquad\qquad
\begin{tikzpicture}[scale = 0.5,thick, baseline={(0,-1ex/2)}] 
\tikzstyle{vertex} = [shape=circle, minimum size=2pt, inner sep=1pt]
\foreach \i in {1,2,3} {
    \node[vertex] (G\i) at (\i, 1) [shape=circle, draw] {};
    \node[vertex] (G-\i) at (\i, -1) [shape=circle, draw] {};
}
\draw[-,color=darkred] (G2) .. controls +(0, -.5) and +(0, -.5) .. (G1) .. controls +(0,-1) and +(0,1) ..
         (G-2) .. controls +(0., .5) and +(0, .5) .. (G-3);
\draw[-] (G3) .. controls +(0,-1) and +(0,1) .. (G-1);
\end{tikzpicture}\,.
\]
Note that $\mcG_3^{(2,1,2)}(\beta)$ does not contain $1 \in \mcP_n(\beta)$ as this diagram has $3$ parts.
The product of diagrams $P_1 \cdot P_2 \subseteq P_1$ and $P_2 \cdot P_1 \subseteq P_1$ by a parity argument.
Hence, $\mcG_3^{(2,1,2)}(\beta)$ is not a unital when considered as a subalgebra.
\end{ex}

However, his main result describing the generators yields the following.

\begin{thm}[{\cite{Tanabe97}}]
\label{thm:complex_centralizer}
Suppose $\mcG_n^{(r,d,m)}$ is an algebra over $\field$.
There exists a surjection
\[
\phi \colon \mcG_n^{(r,d,m)}(m) \to \End_{G(r,d,m)} \natrepr^{\otimes n}.
\]
Furthermore, the surjection $\phi$ is an isomorphism if $m \geq 2n$.
\end{thm}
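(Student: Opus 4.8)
The plan is to deduce this from Tanabe's explicit generator description together with Theorem~\ref{thm:partition_double_centralizer}, treating the two assertions (surjectivity for all $m$, and bijectivity for $m \geq 2n$) separately. First I would recall Tanabe's result: $\End_{G(r,d,m)} \natrepr^{\otimes n}$ is spanned by the image of certain partition-algebra diagrams, and the congruence conditions~(\ref{cond:blocks}) and~(\ref{cond:d}) defining $\mcG_n^{(r,d,m)}$ are \emph{exactly} the combinatorial conditions characterizing which set-partition diagrams give $G(r,d,m)$-equivariant endomorphisms. The key point is that the natural $\mcP_n(m)$-action on $\natrepr^{\otimes n}$ restricts on a diagram basis element to an operator that commutes with the $G(r,d,m)$-action if and only if the diagram satisfies the stated congruences; this is the content of~\cite[Lemma~2.1]{Tanabe97}. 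I would define $\phi$ as the restriction to $\mcG_n^{(r,d,m)}$ of the surjection $\mcP_n(m) \to \End_{\Sym_m} \natrepr^{\otimes n}$ from Theorem~\ref{thm:partition_double_centralizer}, check that its image lands in $\End_{G(r,d,m)} \natrepr^{\otimes n}$ (since $G(r,d,m) \supseteq \Sym_m$, the image automatically commutes with the larger group action precisely when the diagram meets the conditions), and conclude surjectivity because Tanabe's generators, which satisfy the defining conditions, span the target.

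For the bijectivity claim when $m \geq 2n$, the strategy is a dimension/linear-independence argument. By Theorem~\ref{thm:partition_double_centralizer}, the map $\mcP_n(m) \to \End_{\Sym_m}\natrepr^{\otimes n}$ is an isomorphism when $m \geq 2n$, so in this range the diagram basis of $\mcP_n(m)$ maps to a linearly independent set in $\End_{\Sym_m}\natrepr^{\otimes n}$. Restricting to the sub-collection of diagrams satisfying the congruence conditions, their images remain linearly independent (as a subset of an independent set), so $\phi$ is injective. Combined with the already-established surjectivity, $\phi$ is then an isomorphism.

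The main obstacle is establishing that $\phi$ is well-defined as an \emph{algebra} homomorphism, i.e.\ that the subspace $\mcG_n^{(r,d,m)}$ is genuinely closed under diagram multiplication and that the conditions~(\ref{cond:blocks})--(\ref{cond:d}) are preserved under composition; the preceding discussion already flags that for $m < n$ this subalgebra need not even be unital, and Condition~(\ref{cond:d}) interacts with the block count $\ell = m$ in a subtle way under stacking diagrams (one must track how the congruences $N(\rho_i) \equiv N'(\rho_i)$ behave when interior components are removed and blocks merge). I would handle this by arguing at the level of the endomorphism algebra: since $\End_{G(r,d,m)}\natrepr^{\otimes n}$ is manifestly an algebra and $\phi$ surjects onto it with image spanned by the admissible diagrams, closure under multiplication is inherited from the target whenever $m \geq 2n$, and for the general statement one appeals directly to Tanabe's verification that the admissible diagrams form a multiplicatively closed set. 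The careful bookkeeping of the mod-$r$ and mod-$r/d$ congruences under composition is the step I expect to require the most attention.
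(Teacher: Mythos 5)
Your main line of argument coincides with the paper's: the paper states this theorem as a direct consequence of Tanabe's work, with $\phi$ being the restriction of the surjection $\mcP_n(m) \to \End_{\Sym_m} \natrepr^{\otimes n}$ of Theorem~\ref{thm:partition_double_centralizer}, surjectivity coming from~\cite[Lemma~2.1]{Tanabe97} (the admissible diagrams span the $G(r,d,m)$-centralizer), and bijectivity for $m \geq 2n$ coming from faithfulness of the $\mcP_n(m)$-representation in that range, exactly as in your linear-independence-of-a-subset argument.

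The one step that would fail is your fallback for the general case, where you propose to appeal ``directly to Tanabe's verification that the admissible diagrams form a multiplicatively closed set.'' No such verification exists, and the paper explicitly refutes the claim: Example~\ref{ex:G2222_diagrams} exhibits products of admissible diagrams of $\mcG_2^{(2,2,2)}(\beta)$ that land outside the span of the admissible diagrams, and shows that Tanabe's proposed remedy (declaring diagrams with more than $m$ parts to be zero, i.e., working modulo a supposed ideal) does not work because those diagrams do not span an ideal of $\mcP_2(\beta)$; indeed, under honest diagram multiplication one gets $\mcG_2^{(2,2,2)}(\beta) = \mcP_2(\beta)$. This is precisely why the theorem carries the hypothesis ``Suppose $\mcG_n^{(r,d,m)}$ is an algebra over $\field$'': closure under multiplication is \emph{assumed}, not proved, and it genuinely fails in some cases (while in the cases the paper later restricts to, namely $r > n$ or $m \geq 2n$, Condition~(\ref{cond:d}) is vacuous and closure does hold). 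So your proof should simply invoke that hypothesis, under which $\phi$, being the restriction of an algebra homomorphism to a subalgebra, is automatically an algebra homomorphism; with that substitution the argument is complete and agrees with the paper's.
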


\begin{conj}
For $r > 1$ and $d = 1$, the map $\psi$ is an isomorphism if and only if $m \geq n$.
\end{conj}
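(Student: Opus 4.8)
The plan is to leverage the surjectivity in Theorem~\ref{thm:complex_centralizer}, so that for $\beta = m$ it suffices to decide when $\phi$ is injective, equivalently when $\dim \mcG_n^{(r,1,m)}(m) = \dim \End_{G(r,1,m)} \natrepr^{\otimes n}$, and then to promote a linear isomorphism to an algebra isomorphism. First I would compute the right-hand dimension through the orbit basis of the centralizer: a basis of $\End_{G(r,1,m)} \natrepr^{\otimes n}$ is indexed by the $\Sym_m$-orbits of pairs $(a,b) \in [m]^n \times [m]^n$ satisfying the color-balance condition $m_v(a) \equiv m_v(b) \pmod r$ for every value $v \in [m]$, where $m_v$ records multiplicities (invariance under the permutation part forces $\Sym_m$-orbits, and commuting with the diagonal torus forces the congruence). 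Reading off the set partition cut out by such a pair shows these orbits are exactly the set partitions $\rho$ of $[n] \sqcup [n]'$ with $\ell(\rho) \le m$ and $N(\rho_i) \equiv N'(\rho_i) \pmod r$ for all $i$ -- precisely the indexing set of the diagram basis of $\mcG_n^{(r,1,m)}$.

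The key structural input, and the place where $r > 1$ enters, is that for $r > 1$ no balanced block can be a singleton (a block with $N_i = 1$, $N'_i = 0$ would require $1 \equiv 0 \pmod r$), so every balanced block has size at least $2$ and hence every basis diagram of $\mcG_n^{(r,1,m)}$ has at most $n$ blocks. For the forward direction $m \ge n$ I would argue as follows: the bound $\ell(\rho) \le n \le m$ makes the constraint ``$\le m$ blocks'' vacuous, the orbit element realizing each balanced pattern is nonzero, and expressing $\phi$ of a diagram in the orbit basis is unitriangular for the refinement order (coarsening merges balanced blocks into balanced blocks), so $\phi$ is a linear isomorphism. Unitality is then immediate, since the identity diagram $\{\{i,i'\}\}_{i=1}^{n}$ has $n \le m$ size-two propagating blocks and therefore lies in $\mcG_n^{(r,1,m)}$; a surjective unital algebra map between algebras of equal dimension is an isomorphism.

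For the reverse direction the obstruction is that when $m < n$ the identity diagram has $n > m$ blocks and is excluded, so $1_{\mcP_n} \notin \mcG_n^{(r,1,m)}$. The plan is to upgrade this to the statement that $\mcG_n^{(r,1,m)}$ admits no unit at all, along the lines of the computation for $\mcG_3^{(2,1,2)}(\beta)$: stratify the basis by the number of propagating blocks and show, by a parity/counting argument on $N_i - N'_i \pmod r$, that multiplication cannot reproduce a diagram of maximal propagating number, so no two-sided identity can exist; a non-unital algebra cannot be isomorphic to the unital centralizer. I expect this final step to be the main obstacle. One must rule out a unit hidden in the lower strata uniformly in $r,n,m$, and -- more seriously -- one must verify that the failure genuinely forces $\dim \mcG_n^{(r,1,m)}(m) > \dim \End_{G(r,1,m)} \natrepr^{\otimes n}$ rather than merely redistributing the cellular strata, since the orbit count above becomes delicate exactly in the range $m < n$. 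Settling this dimension comparison -- equivalently, exhibiting an explicit nonzero element of $\ker \phi$ for each $m < n$ -- is where the real work lies.
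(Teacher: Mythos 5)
This statement is one of the paper's \emph{conjectures}: it is stated without any proof (indeed it refers to an undefined map $\psi$, presumably the $\phi$ of Theorem~\ref{thm:complex_centralizer}), so there is no argument of the paper to compare yours against, and your attempt must stand on its own. Its first half does, essentially: your orbit-basis computation of $\End_{G(r,1,m)}\natrepr^{\otimes n}$ is correct, and for $r>1$, $d=1$ the observation that balanced blocks cannot be singletons forces every admissible diagram to have at most $n$ blocks, so for $m\geq n$ the constraint $\ell\leq m$ is vacuous, balance is preserved under stacking (so $\mcG_n^{(r,1,m)}$ really is an algebra and Theorem~\ref{thm:complex_centralizer} applies), the diagram count equals the orbit count, and unitriangularity plus surjectivity gives the isomorphism. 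Written up carefully, that direction would sharpen the paper's $m\geq 2n$ bound to $m\geq n$ for $d=1$.

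The converse, which you yourself flag as ``where the real work lies,'' is not merely unfinished --- the strategy fails. The inference ``$1_{\mcP_n}\notin\mcG_n^{(r,1,m)}$, hence $\mcG_n^{(r,1,m)}$ has no unit, hence it cannot be isomorphic to the unital centralizer'' is invalid, because a span of diagrams can be a unital algebra whose unit is not $1_{\mcP_n}$ (nor any single diagram). Worse, under the paper's stated definition the ``only if'' direction has outright counterexamples. Take $r=3$, $d=1$, $n=2$, $m=1$: the only balanced set partition of $[2]\sqcup[2]'$ with at most one block is $e=\{\{1,2,1',2'\}\}$; one checks $e\cdot e=e$, so $\mcG_2^{(3,1,1)}(1)=\field\, e$ is a unital one-dimensional algebra, and $\phi(e)$ is the identity of $\End_{G(3,1,1)}(\CC^{\otimes 2})\iso\CC$, so $\phi$ is an algebra isomorphism even though $m=1<n=2$. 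Counterexamples of this shape exist for every $r>n>m$: there all balanced blocks are propagating with equal top and bottom sizes, block counts cannot grow under multiplication, and the diagram count equals the orbit count, forcing $\phi$ to be an isomorphism. So producing a nonzero element of $\ker\phi$ for all $m<n$ --- the step you correctly identify as the crux --- is impossible in the stated generality. Any honest attack on this conjecture must first repair the definition of $\mcG_n^{(r,1,m)}$ in the range $m<n$, on which the paper is itself inconsistent: its example computing $\dim\mcG_3^{(2,1,2)}(\beta)=25$ includes diagrams with three total blocks, contradicting the stated condition $\ell\leq m$, which would give dimension $16$; your proposal silently uses the literal $\ell\leq m$ reading, and that is exactly the reading under which the converse is false.
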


Let us further discuss the assumption that $\mcG_n^{(r,d,m)}(\beta)$ is an algebra.
Indeed, the number of blocks could potentially grow larger than $m$.
In~\cite{Tanabe97}, it was said diagrams with strictly more than $m$ parts are $0$ in $\mcG_n^{(r,d,m)}(\beta)$, which means such diagrams should \emph{span} an ideal $\mcI$ of the partition algebra such that $\mcG_n^{(r,d,m)}(\beta) \iso \mcP_n(\beta) / \mcI$.
Indeed, taking such a quotient is equivalent to setting (the basis elements of) the diagrams to $0$, and there would be no other relations since the remaining diagrams (were claimed in~\cite[Lemma~2.1]{Tanabe97} to) form a basis for $\mcG_n^{(r,d,m)}$.
However, this is not the case, as the next example shows.
This is a byproduct of the fact that the diagrams should be in a quotient of the algebra $\mcG_n^{(r,d,2n)}(\beta)$ for small $m$.

\begin{ex}
\label{ex:G2222_diagrams}
Consider $\mcG_2^{(2,2,2)}(\beta)$.
We have the diagrams coming from Condition~(\ref{cond:d}):
\[
\begin{tikzpicture}[scale = 0.5,thick, baseline={(0,-1ex/2)}] 
\tikzstyle{vertex} = [shape=circle, minimum size=2pt, inner sep=1pt]
\foreach \i in {1,2} {
    \node[vertex] (G\i) at (\i, 1) [shape=circle, draw] {};
    \node[vertex] (G-\i) at (\i, -1) [shape=circle, draw] {};
}
\draw[-] (G-1) -- (G1) .. controls +(0, -.5) and +(0, -.5) .. (G2);
\end{tikzpicture}\,,
\qquad\qquad
\begin{tikzpicture}[scale = 0.5,thick, baseline={(0,-1ex/2)}] 
\tikzstyle{vertex} = [shape=circle, minimum size=2pt, inner sep=1pt]
\foreach \i in {1,2} {
    \node[vertex] (G\i) at (\i, 1) [shape=circle, draw] {};
    \node[vertex] (G-\i) at (\i, -1) [shape=circle, draw] {};
}
\draw[-] (G1) .. controls +(0, -.5) and +(0, -.5) .. (G2) -- (G-2);
\end{tikzpicture}\,,
\qquad\qquad
\begin{tikzpicture}[scale = 0.5,thick, baseline={(0,-1ex/2)}] 
\tikzstyle{vertex} = [shape=circle, minimum size=2pt, inner sep=1pt]
\foreach \i in {1,2} {
    \node[vertex] (G\i) at (\i, 1) [shape=circle, draw] {};
    \node[vertex] (G-\i) at (\i, -1) [shape=circle, draw] {};
}
\draw[-] (G1) -- (G-1) .. controls +(0, .5) and +(0, .5) .. (G-2);
\end{tikzpicture}\,,
\qquad\qquad
\begin{tikzpicture}[scale = 0.5,thick, baseline={(0,-1ex/2)}] 
\tikzstyle{vertex} = [shape=circle, minimum size=2pt, inner sep=1pt]
\foreach \i in {1,2} {
    \node[vertex] (G\i) at (\i, 1) [shape=circle, draw] {};
    \node[vertex] (G-\i) at (\i, -1) [shape=circle, draw] {};
}
\draw[-] (G-1) .. controls +(0, .5) and +(0, .5) .. (G-2) -- (G2);
\end{tikzpicture}\,,
\]
and using these elements, we can form the elements
\begin{equation}
\label{eq:max2_3parts}
\begin{tikzpicture}[scale = 0.5,thick, baseline={(0,-1ex/2)}] 
\tikzstyle{vertex} = [shape=circle, minimum size=2pt, inner sep=1pt]
\foreach \i in {1,2} {
    \node[vertex] (G\i) at (\i, 1) [shape=circle, draw] {};
    \node[vertex] (G-\i) at (\i, -1) [shape=circle, draw] {};
}
\draw[-] (G-1) -- (G1);
\end{tikzpicture}\,,
\qquad\qquad
\begin{tikzpicture}[scale = 0.5,thick, baseline={(0,-1ex/2)}] 
\tikzstyle{vertex} = [shape=circle, minimum size=2pt, inner sep=1pt]
\foreach \i in {1,2} {
    \node[vertex] (G\i) at (\i, 1) [shape=circle, draw] {};
    \node[vertex] (G-\i) at (\i, -1) [shape=circle, draw] {};
}
\draw[-] (G-2) -- (G2);
\end{tikzpicture}\,.
\end{equation}
Hence, the ``basis'' for $\mcG_2^{(2,2,2)}(\beta)$ with $\beta \neq 0$ (such as $\beta = m = n = 2$) is not closed under multiplication unless we consider the product to be $0$.
As above, we must have the diagrams with $3$ or $4$ parts spanning an ideal of $\mcP_2(\beta)$.
However, we can take one of the products
\[
\begin{tikzpicture}[scale = 0.5,thick, baseline={(0,-1ex/2)}] 
\tikzstyle{vertex} = [shape=circle, minimum size=2pt, inner sep=1pt]
\foreach \i in {1,2} {
    \node[vertex] (G\i) at (\i, 1) [shape=circle, draw] {};
    \node[vertex] (G-\i) at (\i, -1) [shape=circle, draw] {};
}
\draw[-] (G-1) .. controls +(0, .5) and +(0, .5) .. (G-2) -- (G2) .. controls +(0, -.5) and +(0, -.5) .. (G1);
\end{tikzpicture}
\; \cdot \;
\begin{tikzpicture}[scale = 0.5,thick, baseline={(0,-1ex/2)}] 
\tikzstyle{vertex} = [shape=circle, minimum size=2pt, inner sep=1pt]
\foreach \i in {1,2} {
    \node[vertex] (G\i) at (\i, 1) [shape=circle, draw] {};
    \node[vertex] (G-\i) at (\i, -1) [shape=circle, draw] {};
}
\draw[-] (G-2) -- (G2);
\end{tikzpicture}
\; = \;
\begin{tikzpicture}[scale = 0.5,thick, baseline={(0,-1ex/2)}] 
\tikzstyle{vertex} = [shape=circle, minimum size=2pt, inner sep=1pt]
\foreach \i in {1,2} {
    \node[vertex] (G\i) at (\i, 1) [shape=circle, draw] {};
    \node[vertex] (G-\i) at (\i, -1) [shape=circle, draw] {};
}
\draw[-] (G-1) .. controls +(0, .5) and +(0, .5) .. (G-2) -- (G2);
\end{tikzpicture}\,,
\qquad\qquad
\begin{tikzpicture}[scale = 0.5,thick, baseline={(0,-1ex/2)}] 
\tikzstyle{vertex} = [shape=circle, minimum size=2pt, inner sep=1pt]
\foreach \i in {1,2} {
    \node[vertex] (G\i) at (\i, 1) [shape=circle, draw] {};
    \node[vertex] (G-\i) at (\i, -1) [shape=circle, draw] {};
}
\draw[-] (G-2) -- (G2);
\end{tikzpicture}
\; \cdot \;
\begin{tikzpicture}[scale = 0.5,thick, baseline={(0,-1ex/2)}] 
\tikzstyle{vertex} = [shape=circle, minimum size=2pt, inner sep=1pt]
\foreach \i in {1,2} {
    \node[vertex] (G\i) at (\i, 1) [shape=circle, draw] {};
    \node[vertex] (G-\i) at (\i, -1) [shape=circle, draw] {};
}
\draw[-] (G-1) .. controls +(0, .5) and +(0, .5) .. (G-2) -- (G2) .. controls +(0, -.5) and +(0, -.5) .. (G1);
\end{tikzpicture}
\; = \;
\begin{tikzpicture}[scale = 0.5,thick, baseline={(0,-1ex/2)}] 
\tikzstyle{vertex} = [shape=circle, minimum size=2pt, inner sep=1pt]
\foreach \i in {1,2} {
    \node[vertex] (G\i) at (\i, 1) [shape=circle, draw] {};
    \node[vertex] (G-\i) at (\i, -1) [shape=circle, draw] {};
}
\draw[-] (G1) .. controls +(0, -.5) and +(0, -.5) .. (G2) -- (G-2);
\end{tikzpicture}\,,
\]
which is not in the supposed ideal (either left or right).

In fact, we note that the elements in~\eqref{eq:max2_3parts} plus $s_1$ and $\{\{1,2,1',2'\}\}$ (both are defined as ``basis'' elements of $\mcG_2^{(2,2,2)}(\beta)$) are the generators of the full partition algebra $\mcP_2(\beta)$.
Hence, under the usual multiplication, we have $\mcG_2^{(2,2,2)}(\beta) = \mcP_2(\beta)$.
\end{ex}

Another way to recover the important portion of Tanabe's results in~\cite{Tanabe97} would be to say there exists a surjection $\mcG_n^{(r,d,2n)}(m) \to \End_{G(r,d,m)} \natrepr^{\otimes n}$.
Note that $\mcG_n^{(r,d,m)}(\beta) \iso \mcG_n^{(r,d,2n)}(\beta)$ for all $m \geq 2n$.
Furthermore, it is likely that the set of diagrams for $m < 2n$ is the indexing set for a basis, but the multiplication is more complicated than the usual concatenation of diagrams.

Consequently, we will be considering either when $r > n$ or when $m \geq 2n$.
Recall that in these cases, diagrams satisfying Condition~(\ref{cond:d}) can never be satisfied.
Furthermore, we will always take $\mcG_n^{(r,d,m)}(\beta)$ to be a subalgebra.
Now we present our first main result.

\begin{thm}
\label{thm:ccc} 
Suppose that either $r > n$ or $m \geq 2n$.
The $G(r,d,m)$-partition algebra $\mcG^{(r,d,m)}_n(\beta)$ is a cellular algebra.
\end{thm}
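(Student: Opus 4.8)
The plan is to follow the inflation philosophy behind Theorem~\ref{thm:partition_cellular}, but with the middle symmetric group algebra replaced by the group algebra of a Young subgroup that remembers a $\ZZ_r$-coloring of the defects. First I would invoke the hypothesis: since either $r > n$ or $m \geq 2n$, Condition~(\ref{cond:d}) is vacuous by the preceding discussion, so the spanning diagrams of $\mcG^{(r,d,m)}_n(\beta)$ are exactly the set partitions $\rho$ with $\ell \leq m$ and $N(\rho_i) \equiv N'(\rho_i) \pmod{r}$ for every block $\rho_i$. In particular $d$ has dropped out entirely and we are really proving cellularity of the $r$-colored partition algebra. I would then split this single congruence into the two independent requirements it imposes under the decomposition~\eqref{eq:partition_decomposition}: every non-propagating block (equivalently, every non-defect block of a half diagram) has size $\equiv 0 \pmod{r}$, and under the middle matching each propagating block pairs a lower defect with an upper defect of the same size modulo $r$.

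The first requirement is a condition on the half diagrams $v \in V_k$ and $v' \in V'_k$ alone, so it simply selects a $\iota$-invariant subcollection of the inflation data. The second requirement is the essential one, and it forces a refinement of Xi's cell datum: I would color each defect by the residue of its block size in $\ZZ_r$, record the residue multiset as a composition $\mathbf{k} = (k_0, \dotsc, k_{r-1})$ with $\sum_c k_c = k$, and demand that the matching be residue preserving. After sorting the defects by residue, a residue-preserving matching lies in the Young subgroup $\Sym_{\mathbf{k}} = \Sym_{k_0} \times \cdots \times \Sym_{k_{r-1}} \subseteq \Sym_k$. I would therefore replace the factor $\field[\Sym_k]$ in~\eqref{eq:partition_decomposition} by $\field[\Sym_{\mathbf{k}}]$, which is cellular as a tensor product of cellular algebras, with cell datum indexed by the $r$-multipartitions $\boldsymbol{\lambda} = (\lambda^{(0)}, \dotsc, \lambda^{(r-1)})$, $\lambda^{(c)} \vdash k_c$, and with $M(\boldsymbol{\lambda})$ given by pairs of standard tableaux of shape $\boldsymbol{\lambda}$. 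The resulting poset is the set of such multipartitions, graded first by the total number of defects $k \leq n$ and refined by the componentwise dominance order, and the anti-involution remains the vertical reflection (as in the proof of Theorem~\ref{thm:quasi_cellular}, one may need the variant $\widetilde\iota$ of Proposition~\ref{prop:cellular_triangle_basis} rather than $\iota$ on the nose).

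To conclude, I would assemble these pieces into a genuine cell datum for $\mcG^{(r,d,m)}_n(\beta)$ by the iterated inflation of K\"onig--Xi used in Theorem~\ref{thm:partition_cellular}: the half diagrams with non-defect blocks of size $\equiv 0 \pmod{r}$ provide the inflation data, the cellular algebras $\field[\Sym_{\mathbf{k}}]$ provide the cellular ``base'' in each residue sector, and the defect-decreasing part of diagram multiplication supplies the $\Lambda$-filtration. Cleanly, this is the content of the cellular wreath product (Theorem~\ref{thm:wreath_product}) applied with $\field[\Sym_k]$ replaced by its $\ZZ_r$-colored version, which specializes to $\field[G(r,1,m)]$. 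I would stress that while the non-defect size constraint selects a $\iota$-invariant subcollection to which Proposition~\ref{prop:subcellular} applies directly, the residue-matching constraint is not visible on Xi's Murphy-type cell basis of $\mcP_n(\beta)$ and genuinely requires the colored refinement above.

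The main obstacle is verifying the triangularity axiom Definition~\ref{defn:cellular_algebra}(\ref{cell_basis_triangular}) for the refined datum, namely that acting by a colored diagram on a residue-adapted cell basis element produces the required leading term plus elements strictly lower in the multipartition poset. Two kinds of lower-order terms must be controlled simultaneously and shown to land in $\mcA^{<\boldsymbol{\lambda}}$: those coming from compositions that merge or absorb blocks and thereby strictly decrease the total number of defects $k$ (handled as for $\mcP_n(\beta)$), and those coming from Garnir straightening inside each factor $\Sym_{k_c}$, which must be seen not to mix distinct residue sectors. The delicate point is that residues are not preserved by multiplication in the ambient $\mcP_n(\beta)$, only by multiplication that keeps $k$ maximal; so the argument must show that whenever the action preserves the defect count it also preserves the composition $\mathbf{k}$. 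This is precisely what makes $\mcG^{(r,d,m)}_n(\beta)$ closed under multiplication in the first place, and it is what allows the colored inflation to go through.
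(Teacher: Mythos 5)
Your proposal follows essentially the same route as the paper's proof: the paper establishes Theorem~\ref{thm:ccc} by running Xi's iterated-inflation argument for Theorem~\ref{thm:partition_cellular} verbatim, observing exactly as you do that the congruence conditions force the allowed middle permutations to form a direct product of symmetric groups, so that $\Lambda$ becomes tuples of partitions under a product of graded dominance orders, $M(\lambda)$ becomes tuples of tableaux paired with a set partition, and $\iota$ remains vertical reflection on the $\iota$-invariant diagram basis. The one inaccuracy is your aside that this is ``the content of'' Theorem~\ref{thm:wreath_product} --- that construction requires the base diagram algebra to have blocks of size at most $2$, so it does not apply to $\mcG^{(r,d,m)}_n(\beta)$ --- but that remark is not load-bearing, and your core colored-inflation argument (including the check that a defect-count-preserving action preserves the residue composition) is precisely what the paper's terse proof asserts.
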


\begin{proof}
The same proof as in~\cite[Thm.~4.1]{Xi99} (see also~\cite{GP18}) holds here since the additional conditions makes the allowed set of permutations isomorphic to a direct product of symmetric groups.
Note that the set of such diagrams is invariant under $\iota$.
This gives us the indexing set $\Lambda$ as a $k$-tuple of partitions, where $k$ is the number of factors in the direct product for those diagrams satisfying Condition~(\ref{cond:blocks}).
We take the ordering on $\Lambda$ as a product poset of the graded dominance order.
For any $\lambda \in \Lambda$, the set $M(\lambda)$ is the corresponding set of pairs consisting of a tuple of semistandard Young tableaux of shape $\lambda$ and a set partition.
\end{proof}

Recall that $G(1,1,m) \iso \Sym_m$ and the centralizer is the full partition algebra, which was shown to be cellular for $m \geq 2n$ by Xi~\cite{Xi99} (with a small correction by~\cite{GP18}).
This likely extends to all cases such that $\mcG_n^{(r,d,m)}(\beta)$ is a $\field$-algebra with a potentially more complicated description of $\Lambda$.

Next, we will explore some particular cases in more detail and describe the cell datum and cell modules.

\subsubsection{Uniform block algebra}
\label{sec:uniform_block}

We consider the case of $G(r,d,m)$ when $d = 1$ and $r > n$.

Let $\mcU_n$ denote the \defn{uniform block algebra}, which is a subalgebra of the partition algebra such that the number of top elements equals the number of bottom elements.
We omitted $m$ from our notation as we are primarily interested in the case $m \geq n$, where the description is independent of $m$, and the case $m < n$ is not fundamentally different.
The construction of all simple modules of $\mcU_n$ over $\CC$ was recently done by Orellana, Saliola, Schilling, and Zabrocki~\cite{OSSZ21} when $m \geq n$.
Their representation theoretic results essentially becomes a corollary of Theorem~\ref{thm:ccc} using the framework of Section~\ref{sec:crt}.

Let us examine their results in detail.
We begin with the (sub)algebra of idempotents $\mcI\mcU_n$ whose structure was given by~\cite[Lemma~2.3]{OSSZ21}.
The following proposition is a straightforward consequence and forms the foundation of the constructions of~\cite{OSSZ21}.

\begin{prop}
\label{prop:uniform_idempotent_cellular}
The algebra $\mcI\mcU_k$ is a cellular algebra with cell datum $(\Lambda, \iota, M, C)$ given by
\begin{itemize}
\item $\Lambda$ is all set partitions of $[n]$ with at most $m$ parts under the refinement order with $\{[n]\}$ as the bottom element;
\item $\iota$ becomes the identity;
\item $M(\lambda) = \{\lambda\}$ (so we can ignore it);
\item $C$ is the natural diagram basis.
\end{itemize}
Moreover, the cell modules are all (nonzero) simple modules and one dimensional.
\end{prop}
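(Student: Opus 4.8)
The plan is to identify $\mcI\mcU_n$ explicitly as the commutative semigroup algebra of the join-semilattice of set partitions, and then read the cell datum straight off this structure. First I would fix the natural basis: for each set partition $\lambda = \{\lambda_1,\dotsc,\lambda_\ell\}$ of $[n]$ with $\ell \leq m$, let $e_\lambda$ be the diagram whose blocks are $\lambda_i \cup \lambda_i'$. These are exactly the idempotent diagrams spanning $\mcI\mcU_n$, and their product is governed by the join in the partition lattice. The key computation (which is the content of~\cite[Lemma~2.3]{OSSZ21}, and can also be checked directly) is that stacking $e_\mu$ on top of $e_\lambda$ produces the middle partition $\lambda \vee \mu$, the transitive closure of the union of the two partitions; since this middle layer reconnects the top and bottom blocks along $\lambda \vee \mu$, one obtains
\[
e_\lambda \cdot e_\mu = e_{\lambda \vee \mu}.
\]
In particular every middle vertex touches both the top and the bottom, so no interior components are removed and no power of $\beta$ appears; the algebra is commutative; and $\Lambda$ is closed under $\vee$ because $\abs{\lambda \vee \mu} \leq \min(\abs{\lambda},\abs{\mu}) \leq m$.

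The second step is to match this against the stated datum, recalling that $\Lambda$ is ordered so that $\{[n]\}$ is the bottom, i.e. $\lambda \leq \mu$ means $\lambda$ is coarser than $\mu$; in this order $\vee$ is precisely the meet, so $\lambda \vee \mu \leq \lambda$ always. Axiom~(1) is immediate: each block $\lambda_i \cup \lambda_i'$ is symmetric, so the vertical reflection $\iota$ fixes every $e_\lambda$ and restricts to the identity, which is a legitimate anti-involution of a commutative algebra. For the triangularity axiom it suffices to check left multiplication by a basis element $e_\mu$. From $e_\mu e_\lambda = e_{\lambda \vee \mu}$ together with $\lambda \vee \mu \leq \lambda$, I get $e_\mu e_\lambda = e_\lambda$ when $\lambda \leq \mu$ and $e_\mu e_\lambda \in \mcA^{<\lambda} = \Span_{\field}\{e_\nu \mid \nu < \lambda\}$ otherwise. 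As $M(\lambda) = \{\lambda\}$ is a singleton, the scalar $r_{e_\mu}(\lambda,\lambda) \in \{0,1\}$ vacuously does not depend on a second index, so Definition~\ref{defn:cellular_algebra}(\ref{cell_basis_triangular}) holds and $C = \{e_\lambda\}$ is a cell basis with the claimed poset.

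Finally, for the ``moreover'' I would compute the cell modules and their forms. Each $W(\lambda) = \field\, C_\lambda$ is one-dimensional, with $e_\mu$ acting as $1$ when $\lambda \leq \mu$ and as $0$ otherwise; since the principal filter of $\lambda$ determines $\lambda$, these characters are pairwise distinct. Evaluating the bilinear form on $e_\lambda e_\lambda = e_\lambda$ gives $\Phi_\lambda(C_\lambda, C_\lambda) = 1 \neq 0$, so by the Graham--Lehrer theorem every $\lambda \in \Lambda$ labels a simple module, $R(\lambda) = 0$, and $L(\lambda) \iso W(\lambda)$ is nonzero and one-dimensional. Counting, the $\abs{\Lambda}$ pairwise non-isomorphic one-dimensional cell modules then exhaust the simple modules and exhibit $\mcI\mcU_n$ as split semisimple (the algebra of a finite meet-semilattice).

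I expect the only genuinely delicate point to be bookkeeping the order-reversal convention, so that the triangularity in Definition~\ref{defn:cellular_algebra}(\ref{cell_basis_triangular}) points toward the bottom element $\{[n]\}$ rather than away from it; the multiplication rule $e_\lambda \cdot e_\mu = e_{\lambda \vee \mu}$ itself is either quoted from~\cite{OSSZ21} or dispatched by the short diagram argument sketched above, after which the cellular axioms and the simplicity statement are essentially formal.
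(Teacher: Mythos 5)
Your proof is correct and is essentially the paper's own argument: the paper states the proposition as a ``straightforward consequence'' of the structure of $\mcI\mcU_n$ given in~\cite[Lemma~2.3]{OSSZ21}, which is exactly the multiplication rule $e_\lambda \cdot e_\mu = e_{\lambda \vee \mu}$ (with no $\beta$ factors, since every middle vertex stays connected to both boundaries) that you take as your starting point. Your subsequent verification of the anti-involution on the commutative algebra, the triangularity toward the bottom element $\{[n]\}$, and the nonvanishing forms $\Phi_\lambda(C_\lambda,C_\lambda)=1$ giving one-dimensional simple cell modules is precisely the omitted straightforward check, carried out correctly.
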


The main observation is that we cannot have an arbitrary permutation $\sigma \in \Sym_k$ for $k \leq \min(n,m)$, but instead have to preserve the block sizes.
Hence, we remark that~\cite[Prop.~2.5]{OSSZ21} is effectively the decomposition of~\cite[Lemma~4.2]{Xi99}.
Furthermore, we obtain a Young subgroup corresponding to the block sizes, which was the maximal group for the corresponding idempotent in~\cite{OSSZ21}.
This means we could use Proposition~\ref{prop:subcellular} to give an alternative proof via a product of symmetric group algebra cellular bases.

Within each Young subgroup, we have a permutation of multisets of the same size $k$ (contrast this with the partition algebra) representing what can happen with the blocks of size $k$.
This permutation can be represented by a pair of semistandard tableaux whose entries are (disjoint) sets of size $k$, where we use the total ordering given by comparing the smallest element in each set.
Consequently, we see that we can interpret $\Lambda$ as the set of partition tuples $\lambda = (\lambda^{(1)}, \dotsc, \lambda^{(n)})$ such that $\sum_{k=1}^n k \abs{\lambda^{(k)}} = n$.
Furthermore, $M(\lambda)$ becomes the set of semistandard tableau tuples, where the $k$-th element has shape $\lambda^{(k)}$ and is filled with subsets of $[n]$, and the union of all of the entries is $[n]$.
Compare this with Theorem~\ref{thm:partition_cellular}, where we are requiring for each pair $(\rho, T)$ that $\abs{rho} = \abs{\lambda}$ and are further separating into separate blocks based on the size of the parts.

From the above description, we see that the cell modules are precisely the modules given in~\cite{OSSZ21}.
Additionally, the bilinear form $\Phi_{\lambda}$ is given by the corresponding bilinear form of a product of symmetric groups, which is the product of the bilinear forms of the appropriate symmetric group.
Consequently, all of the cell modules are simple when $\field$ has characteristic $0$, which recovers the description in~\cite{OSSZ21}.
Furthermore, we obtain a set of simple modules corresponding over a field of characteristic $p > 0$ when each partition in the tuple $\lambda$ is a $p$-regular partition (\textit{cf.}~\cite[Cor.~4.11]{Xi99}).

\subsubsection{Parity matching algebra}

Recall that the group of signed permutations of rank $m$, which is the Weyl group of $\Or_{2m+1}(\CC)$ or the Coxeter group of type $B_m$, and corresponds to $G(2,1,m)$.
The centralizer algebra was studied in more detail in~\cite{Orellana05,Orellana07}, where it was related to the colored partition algebra introduced by Bloss~\cite{Bloss03}.
From~\cite[Lemma~2.1]{Tanabe97}, the diagrams for this algebra are given by the set partitions of $[n]$ and $[n]'$ such that
\begin{enumerate}[(I)]
\item the blocks of even (resp.\ odd) size connect to blocks of even (resp.\ odd) size;
\item all odd blocks must be connected; and
\item there are at most $m$ such connected components in the resulting diagram. \label{cond:block_sizes}
\end{enumerate}
In particular, the number of odd sized blocks must be the same for $[n]$ and $[n]'$.
If we have $n \leq m$, the last condition~(\ref{cond:block_sizes}) is always satisfied and $\mcG_n^{(2,1,m)}(\beta)$ is a (unital) $\field$-algebra.
Hence, assuming $m \geq n$, we call the subalgebra $\mcP_n(\beta)$ spanned by these diagrams the \defn{parity matching algebra} and denote it by $\mcP\mcM_n(\beta)$ since it becomes independent of $m$ like the uniform block algebra.
Furthermore, $\mcP\mcM_n(\beta)$ is cellular by Theorem~\ref{thm:ccc}.

An alternative description of the basis of $\mcP\mcM_n(\beta)$ consists of all diagrams $\rho = \{\rho_1, \dotsc, \rho_{\ell}\}$ that are even set partitions, that is, we have $\abs{\rho_i} \equiv 0 \pmod{2}$ for all $i$.
Hence, another name for this algebra could be the ``even partition algebra.''
We can see this is indeed a $\field$-subalgebra (\textit{i.e.}, closed under multiplication) by a straightforward parity argument.
As a consequence, we obtain the recursion formula (see, \textit{e.g.},~\cite[Sec.~5]{Orellana07}) for its dimension
\[
\dim \mcP\mcM_n(\beta) = \sum_{i=1}^n \binom{2k-1}{2i-1} \dim \mcP\mcM_{n-i}(\beta)
\]
by removing the block containing $n$ from each diagram.
The sequence of dimensions is~\cite[A005046]{OEIS}, which contains further combinatorial interpretations and the closed formula
\[
\dim \mcP\mcM_n(\beta) = \sum_{k=1}^{2n} \sum_{i=0}^{k-1} (-1)^i \frac{(i-k)^{2n} }{2^{k-1} k!} \binom{2k}{i}.
\]

Another similarity to the uniform block algebra is the even and odd blocks cannot interact.
Hence we can use the Young subgroup of $\Sym_{k_1} \times \Sym_{k_2} \subseteq \Sym_k$, where there are $k_1$ (resp.~$k_2$) odd (resp.\ even) with $k = k_1 + k_2$, to deconstruct our diagrams.

The cell modules of $\mcP\mcM_n(\beta)$ are parameterized by pairs of partitions $(\mu, \nu)$ such that
\begin{subequations}
\label{eq:PM_conditions}
\begin{align}
\abs{\mu} + 2\abs{\nu} & \leq n, \label{eq:PM_min_sizes}
\\ n - \abs{\mu} & \equiv 0 \pmod{2}. \label{eq:PM_even_disconnect}
\end{align}
\end{subequations}
Recall this is the set $\Lambda$.
The inequality~\eqref{eq:PM_min_sizes} comes from partitioning the blocks into the even and odd propagating blocks, which must have size at least $2$ and $1$ respectively.
The condition~\eqref{eq:PM_even_disconnect} is that the non-defect blocks must have even size.

The indexing set $M(\lambda)$ for the basis of the irreducible representation $W(\lambda)$ for $\lambda = (\mu, \nu) \in \Lambda$ is a pair of standard tableaux of shapes $(\mu, \nu)$ such that $\mu$ (resp.~$\nu$) are filled with subsets of $[n]$ of odd (resp.~even) size and a set partition of the remaining letters with all blocks having even size such that the disjoint union of all of the entries is $[n]$.
For the element $v_k$ in the construction~\eqref{eq:specht_decomposition}, we can have very propagating block (which is fixed by the choice of $\lambda$) have size $1$ or $2$ in some fixed order.
By the condition~\eqref{eq:PM_min_sizes}, such an element exists.

To describe the dimensions of the cell modules, we need some combinatorial data.
Let $O_{n,k}$ (resp.\ $E_{n,k}$) denote the number of set partitions $\rho = \{\rho_1, \dotsc, \rho_k\}$ (so exactly $k$ parts) of $[n]$ such that $\abs{\rho_i} \equiv 1 \pmod{2}$ (resp.\ $\abs{\rho_i} \equiv 0 \pmod{2}$) for all $i$.
These are the set of odd (resp.\ even) set partitions with exactly $k$ parts, and $O_{n,k}$ is the sequence~\cite[A136630]{OEIS} (resp.~\cite[A156289]{OEIS} for $E_{2n,k}$).
We note the formulas
\begin{align*}
O_{n,k} & = O_{n-2,k-2} + k^2 O_{n-2,k},
&
E_{2n,k} & = (2k-1) E_{2n-2,k-1} + k^2 E_{2n-2,k},
\\
O_{n,k} & = \frac{1}{2^k k!} \sum_{j=0}^k (-1)^{k-j} \binom{k}{j} (2j - k)^n,
&
E_{2n,k} & = \frac{2}{2^k k!} \sum_{j=1}^k (-1)^{k-j} \binom{2k}{k-j} j^n.
\end{align*}
Note that $E_{2n+1,k} = 0$ and $O_{n,k} = 0$ whenever $n + k \equiv 1 \pmod{2}$ by parity arguments.
The $O_{n,k}$ recurrence relation can be proven combinatorially from two cases:
We have a singleton $\{n\}$, which necessarily means there is another singleton, and removing both of these singletons yields $O_{n-2,k-2}$ (it does not affect the result which other singleton we remove by renaming).
Otherwise we need to remove another element from the part containing $n$.
To reconstruct such a part, for some fixed $\nu$ contributing to $O_{n-2,k}$, we choose a part $\nu_i$ in some $\nu \in O_{n-2,k}$ to add $n$ to and choose another part to correspond to $n-1 \leftrightarrow \min \nu_j - 1/2$ by renaming and reordering.
A combinatorial proof for the $E_{n,k}$ recurrence is similar.
While an explicit combinatorial proof does not seem to be in the literature, these proofs sketched above are likely known to experts.

For $\lambda = (\mu, \nu) \in \Lambda$, from the combinatorial description of $M(\lambda)$, we see that
\begin{subequations}
\label{eq:dim_cell_PM}
\begin{align}
\dim W(\lambda) & = f_{\mu} f_{\nu} \sum_{i=k_1}^n \binom{n}{i} O_{i,k_1} \sum_{j=k_2}^{\lfloor (n - i) / 2 \rfloor} \binom{j}{k_2} E_{n-i,j}
\\ & = f_{\mu} f_{\nu} \sum_{i=k_2}^{\lfloor (n-k_1)/2 \rfloor} \binom{n}{2i} \sum_{j=k_2}^i \binom{j}{k_2} E_{2i,j} \cdot O_{n-2i,k_1},
\end{align}
\end{subequations}
where $k_1 = \abs{\mu}$ and $k_2 = \abs{\nu}$.
Indeed, for the first equality, the first sum comes from choosing the elements for the odd sized blocks (all of which must be defects), the second sum comes from choosing exactly $k_2$ of the even sized set partitions to be defects.
The second equality is similar but first choosing the elements for the even sized blocks.
Furthermore, we have
\begin{align*}
\dim \mcP\mcM_n(\beta) & = \sum_{\lambda \in \Lambda} \bigl( \dim W(\lambda) \bigr)^2
\\ & = \sum_{k_1=0}^n \sum_{k_2=0}^{\rfloor (n-k_1) / 2 \lfloor} k_1! k_2! \left( \sum_{i=k_1}^n \binom{n}{i} O_{i,k_1} \sum_{j=k_2}^{\lfloor (n - i) / 2 \rfloor} \binom{j}{k_2} E_{n-i,j} \right)^2
\\ & = \sum_{k_1=0}^n \sum_{k_2=0}^{\rfloor (n-k_1) / 2 \lfloor} k_1! k_2! \left( \sum_{i=k_2}^{\lfloor (n-k_1)/2 \rfloor} \binom{n}{2i} \sum_{j=k_2}^i \binom{j}{k_2} E_{2i,j} \cdot O_{n-2i,k_1} \right)^2
\end{align*}
from Equation~\eqref{eq:dim_cell_PM}, where we have used Equation~\eqref{eq:fla=n2} to obtain the final two formulas.

By the double centralizer theorem (see, \textit{e.g.},~\cite{EGHLSVY11}), the irreducible representations of $\mcP\mcM_n(m)$ are in bijection with those irreducible $G(2,1,m)$-representations appearing in the decomposition of $\natrepr^{\otimes n}$.
All irreducible representations of $G(2,1,m)$ which are indexed by pairs of partitions $(\mu, \widetilde{\nu})$ such that $\abs{\mu} + \abs{\widetilde{\nu}} = m$ (this was attributed to Specht~\cite{Specht32} in~\cite{Ram97}).
We can see by directly counting that we do not obtain all irreducible representations of $G(2,1,n)$ when $m \geq n$.

\begin{ex}
The basis for $\mcP\mcM_2(\beta)$ is given by four diagrams
\ytableausetup{boxsize=1.1em}
\[
\begin{tikzpicture}[scale = 0.5,thick, baseline={(0,-1ex/2)}] 
\tikzstyle{vertex} = [shape=circle, minimum size=2pt, inner sep=1pt]
\foreach \i in {1,2} {
    \node[vertex] (G\i) at (\i, 1) [shape=circle, draw] {};
    \node[vertex] (G-\i) at (\i, -1) [shape=circle, draw] {};
}
\draw[-,color=darkred] (G1) -- (G-1);
\draw[-,color=blue] (G2) -- (G-2);
\end{tikzpicture}\,,
\qquad\qquad
\begin{tikzpicture}[scale = 0.5,thick, baseline={(0,-1ex/2)}] 
\tikzstyle{vertex} = [shape=circle, minimum size=2pt, inner sep=1pt]
\foreach \i in {1,2} {
    \node[vertex] (G\i) at (\i, 1) [shape=circle, draw] {};
    \node[vertex] (G-\i) at (\i, -1) [shape=circle, draw] {};
}
\draw[-,color=darkred] (G1) -- (G-2);
\draw[-,color=blue] (G2) -- (G-1);
\end{tikzpicture}\,,
\qquad\qquad
\begin{tikzpicture}[scale = 0.5,thick, baseline={(0,-1ex/2)}] 
\tikzstyle{vertex} = [shape=circle, minimum size=2pt, inner sep=1pt]
\foreach \i in {1,2} {
    \node[vertex] (G\i) at (\i, 1) [shape=circle, draw] {};
    \node[vertex] (G-\i) at (\i, -1) [shape=circle, draw] {};
}
\draw[-,color=darkred] (G2) .. controls +(0, -.5) and +(0, -.5) .. (G1) --
         (G-1) .. controls +(0., .5) and +(0, .5) .. (G-2);
\end{tikzpicture}\,,
\qquad\qquad
\begin{tikzpicture}[scale = 0.5,thick, baseline={(0,-1ex/2)}] 
\tikzstyle{vertex} = [shape=circle, minimum size=2pt, inner sep=1pt]
\foreach \i in {1,2} {
    \node[vertex] (G\i) at (\i, 1) [shape=circle, draw] {};
    \node[vertex] (G-\i) at (\i, -1) [shape=circle, draw] {};
}
\draw[-,color=darkred] (G1) .. controls +(0, -.5) and +(0, -.5) .. (G2);
\draw[-,color=blue] (G-1) .. controls +(0, .5) and +(0, .5) .. (G-2);
\end{tikzpicture}\,,
\]
which indeed forms a $\field$-algebra.
The set $\Lambda$ is given by the following pairs of partitions
\[
\left( \ydiagram{2}, \emptyset \right),
\qquad
\left( \ydiagram{1,1}, \emptyset \right),
\qquad
\left( \emptyset, \ydiagram{1} \right),
\qquad
(\emptyset, \emptyset),
\]
which are each one-dimensional modules with the corresponding semistandard tableaux
\[
\left( \ytableaushort{12}, \emptyset \right),
\qquad
\left( \ytableaushort{1,2}, \emptyset \right),
\qquad
\left( \emptyset, \ytableaushort{{12}} \right),
\qquad
(\emptyset, \emptyset),
\]
However, there are 5 irreducible representations of $G(2,1,2)$ corresponding to the pair of partitions
\[
\left( \ydiagram{2}, \emptyset \right),
\qquad
\left( \ydiagram{1,1}, \emptyset \right),
\qquad
\left( \emptyset, \ydiagram{1,1}\right),
\qquad
\left( \emptyset, \ydiagram{2} \right),
\qquad
\left( \ydiagram{1}, \ydiagram{1} \right),
\]
where the last one corresponds to a $2$ dimensional representation.
\end{ex}

\subsubsection{Colored permutation symmetrizer}

The remaining case corresponding to colored permutations $G(r,1,m)$ with $2 < r \leq n$ is essentially the same as $r = 2$.
The index set will be a subset of all $r$-tuples of partitions that satisfy the general form of the conditions~\eqref{eq:PM_conditions} for $\lambda =(\lambda^{(1)}, \dotsc, \lambda^{(r)})$:
There exists a $k_0 \in \ZZ_{\geq 0}$ such that
\begin{align*}
r k_0 + \sum_{j=1}^r j \abs{\lambda^{(j)}} & = 0,
\end{align*}
with $\lambda$ again coming from encoding the propagating blocks and $k_0$ approximately counting the non-propagating blocks (which must necessarily be a multiple of $r$; hence $k_0$ is a sum of these factors).
Furthermore, such a diagram exists independent of $m$ as we can always combine the blocks of size $r$ with one of those for $\lambda^{(j)}$ for any $j$ and since all of these are connected with the defect blocks, they do not contribute any additional connected components.
The set $M(\lambda)$ is the analogous tuple of semistandard tableaux of shape $\lambda$ with $\lambda^{(j)}$ being filled with multisets of size $j$ and a set partition with all the blocks having size equivalent to $0$ modulo $r$.
As before, all entries must be disjoint.

\subsection{Brauer algebra}

The \defn{Brauer algebra} $\mcB_n(\beta)$ is the subalgebra of the partition algebra $\mcP_n(\beta)$ spanned by all diagrams with parts that have exactly size $2$.
It was introduced by Brauer~\cite{Brauer37} coming from Schur--Weyl duality using the orthognal group, and it is known to be a cellular algebra~\cite{GL96} (see also~\cite{Xi99}) of dimension
\[
\dim \mcB_n(\beta) = \frac{(2n)!}{2^n n!} = (2n - 1)!!.
\]
Furthermore, from our description, we can easily see that the cell modules (which are Brauer's Specht modules) have dimension
\[
\dim W(\lambda) = \binom{n}{\abs{\lambda}} (n - \abs{\lambda} - 1)!! \dim V_{\Sym_{\abs{\lambda}}}(\lambda),
\]
where $\lambda$ is a partition such that $\abs{\lambda} \leq n$ and $\abs{\lambda} \equiv n \pmod{2}$.
Indeed, we select $\abs{\lambda}$ propagating (necessarily) singletons from $[n]$, then we are left choosing a Brauer diagram on the renaming $n - \abs{\lambda}$ such nodes; the usual Specht module $V_{\Sym_{\abs{\lambda}}}(\lambda)$ comes from Equation~\eqref{eq:specht_decomposition}.
When $\field$ has characteristic $0$, Wenzl showed~\cite{Wenzl1988} it is semisimple whenever $\beta \in \field \setminus \{0, \pm 1, \dotsc, \pm n\}$.
The representations of $\mcB_n(\beta)$ when it is not semisimple were studied by Doran, Hanlon, and Wales~\cite{DWH99} and Cox, De Visscher, and Martin~\cite{CDVM09,CDVM09II}.
The classification of when $\mcB_n(\beta)$ is semisimple for the general characteristic case was shown by Rui and Si~\cite{Rui05,RS06}.

\subsection{Rook Brauer algebra}
\label{sec:rook_brauer}

The \defn{rook Brauer algebra} $\mcR\mcB_n(\beta)$ (also known as the partial Brauer algebra) is defined as the subalgebra of $\mcP_n(\beta)$ spanned by all diagrams such that the parts have size at most $2$.
This has been studied in~\cite{HdM14,MM14} with two different descriptions of their irreducible representations.
The monoid algebra version (so $\beta = 1$) was also studied in~\cite{DEG17}.
Furthermore, we have the following result.

\begin{thm}[{\cite{MM14}}]
\label{thm:rook_Brauer}
The rook Brauer algebra is a cellular algebra. Moreover, it is Morita equivalent to
\[
\mcR\mcB_n(\beta) \simeq \mcB_n(\beta - 1) \oplus \mcB_{n-1}(\beta - 1)
\]
for $\beta - 1, \beta \neq 0$.
\end{thm}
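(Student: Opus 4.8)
First I would dispatch the cellular claim directly from the machinery already in place. The algebra $\mcR\mcB_n(\beta)$ is the span of the size-$\le 2$ diagrams, a subspace manifestly stable under the vertical reflection $\iota$. Because every propagating block of such a diagram meets both $[n]$ and $[n]'$ and has size at most $2$, it is a single through-strand; hence in the decomposition~\eqref{eq:partition_decomposition} the half-diagrams of $\mcR\mcB_n(\beta)$ are exactly the partial matchings of $[n]$ (with the $k$ through-strand endpoints as defects) and the middle factor is the \emph{full} group algebra $\field[\Sym_k]$. Consequently the cell-basis elements $C^{\lambda}_{ST}$ of $\mcP_n(\beta)$ from Theorem~\ref{thm:partition_cellular} whose half-diagrams are partial matchings already lie in $\mcR\mcB_n(\beta)$ (the Murphy basis of $\field[\Sym_k]$ contributes only through-strands), form an $\iota$-invariant subset, and span $\mcR\mcB_n(\beta)$ since the transition matrix to the cell basis respects the half-diagram type. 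Proposition~\ref{prop:subcellular} then yields a cell datum with $\Lambda = \{\lambda \vdash k \mid 0 \le k \le n\}$, with $\iota$ the reflection and $M(\lambda)$ the partial matchings carrying a standard $\lambda$-tableau on their defects; this also reads off the cell modules $W(\lambda)$.

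\textbf{Morita equivalence, strategy.} For the second assertion I would produce a full idempotent $e \in \mcR\mcB_n(\beta)$ with $e\,\mcR\mcB_n(\beta)\,e \cong \mcB_n(\beta-1) \oplus \mcB_{n-1}(\beta-1)$ and then invoke the standard fact that $\mcA$ is Morita equivalent to $e\mcA e$ whenever $\mcA e \mcA = \mcA$. The building blocks are the rook idempotents: writing $r_i$ for the diagram that is the identity except that $i$ and $i'$ are isolated, one checks $r_i^2 = \beta r_i$ and $r_i r_j = r_j r_i$, so (using $\beta \ne 0$) the elements $q_i := \beta^{-1} r_i$ are commuting idempotents. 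Symmetric combinations of the $q_i$ separate diagrams according to a canonical resolution of their rook content, and I would assemble from them the idempotent $e$ cutting out the ``rook-reduced'' part of the algebra. A useful consistency check beforehand is that the cell posets already match: the cells of $\mcB_n(\beta-1)$ (those $\lambda\vdash k$ with $k\equiv n \bmod 2$) together with those of $\mcB_{n-1}(\beta-1)$ (those $\lambda\vdash k$ with $k\equiv n-1 \bmod 2$) exhaust precisely the $\lambda \vdash k$ with $0 \le k \le n$, i.e.\ the cells of $\mcR\mcB_n(\beta)$ found above, so the two sides have the same number of simple modules.

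\textbf{The parameter shift and the splitting.} The two features to explain are the shift $\beta \mapsto \beta - 1$ and the appearance of two Brauer algebras, of ranks $n$ and $n-1$. The shift is a Jones-basic-construction effect: inside the corner $e\,\mcR\mcB_n(\beta)\,e$ a cup--cap no longer closes to a plain loop of weight $\beta$, because conjugating by the rook-resolving idempotent subtracts the single ``rook loop'' the isolated vertices could have contracted to, leaving the renormalized weight $\beta-1$; this is where $\beta - 1 \ne 0$ enters, guaranteeing that the renormalized cup--caps again furnish genuine idempotents in the corner. The two summands come from the parity of the number of isolated points, which equals $n-k \bmod 2$ on both the top and the bottom of any diagram: the even-defect sector organizes into a Brauer algebra on $n$ strands, while the odd-defect sector, after one unpaired rook is absorbed into an auxiliary strand, organizes into a Brauer algebra on $n-1$ strands, and the basic-construction computation shows there are no interactions between the two sectors inside the corner. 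Finally I would verify fullness, $\mcR\mcB_n(\beta)\, e\, \mcR\mcB_n(\beta) = \mcR\mcB_n(\beta)$, by producing every diagram as a product of elements of $e\,\mcR\mcB_n(\beta)$ and $\mcR\mcB_n(\beta)\,e$, the rooks being created from the $e$-sectors by multiplication with suitable half-diagrams.

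\textbf{Main obstacle.} The genuinely technical point is the explicit construction of $e$ together with the verification that the corner is \emph{exactly} $\mcB_n(\beta-1)\oplus\mcB_{n-1}(\beta-1)$ --- in particular that the renormalized loop weight is $\beta-1$ on the nose and that the two parity sectors decouple as a direct sum rather than a nonsplit extension. This bookkeeping, essentially the content of the argument in~\cite{MM14}, is cleanest in the two-parameter rook Brauer algebra, where the rook-contraction weight is tracked separately and then specialized. I would emphasize that the Morita statement is insensitive to the dimension mismatch --- already $\dim \mcR\mcB_2(\beta) = 10 \neq 4 = \dim\bigl(\mcB_2(\beta-1)\oplus\mcB_1(\beta-1)\bigr)$ --- so the target is an equivalence of module categories (equivalently, a product decomposition $\mcR\mcB_n(\beta) \cong \mcA_1 \times \mcA_2$ with the factors Morita equivalent to the two Brauer algebras), not an isomorphism.
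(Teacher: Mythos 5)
First, a point of calibration: the paper does not prove Theorem~\ref{thm:rook_Brauer} at all --- it is imported verbatim from \cite{MM14}, and the only thing the paper adds around it is the half-diagram count for the cell module dimensions. So there is no in-paper proof to compare against, and your proposal has to stand on its own. Your cellularity paragraph does stand: since every propagating block of a size-$\le 2$ diagram is a single through-strand, the restriction of the decomposition~\eqref{eq:partition_decomposition} to $\mcR\mcB_n(\beta)$ has partial matchings (with singleton defects) as half diagrams and the full $\field[\Sym_k]$ in the middle, so the subset of Xi's cell basis from Theorem~\ref{thm:partition_cellular} indexed by such half diagrams is an $\iota$-invariant basis of the subalgebra, and Proposition~\ref{prop:subcellular} applies. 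This is exactly the paper's general mechanism, and your cell datum agrees with the formula $\dim W(\lambda) = f_{\lambda}\binom{n}{k}\sum_{m}\binom{n-k}{2m}(2m-1)!!$ that the paper records immediately after the theorem.

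The Morita half, however, is a plan rather than a proof, and the gap is precisely where you locate it. The idempotent $e$ is never constructed --- ``symmetric combinations of the $q_i$'' that ``separate diagrams according to a canonical resolution of their rook content'' is not a definition; the claim that the corner's loop parameter is exactly $\beta-1$, the claim that the two parity sectors decouple inside the corner as a direct sum rather than a nonsplit extension, and the fullness $\mcR\mcB_n(\beta)\, e\, \mcR\mcB_n(\beta) = \mcR\mcB_n(\beta)$ are all listed as things you ``would verify,'' yet these verifications constitute essentially the whole content of the theorem. You concede this yourself (``essentially the content of the argument in~\cite{MM14}''), which makes the proposal circular as a standalone proof: everything separating the statement from mere plausibility is deferred to the reference being proved. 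Your supporting checks are correct and worth keeping --- $r_i^2 = \beta r_i$ and the commutativity of the $r_i$ in the one-parameter convention, the matching of cell posets by parity, the $n=2$ dimension comparison, and the observation that Morita equivalence (equivalently, a product decomposition into factors Morita equivalent to the two Brauer algebras) is the right reading of the statement --- but none of them closes the gap in the corner computation.
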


Theorem~\ref{thm:rook_Brauer} gives us one method to describe all of the irreducible representations.
However, we could also undertake the same half diagram analysis like for the Brauer algebra to determine the dimensions of its cell modules:
\[
\dim W(\lambda) = f_{\lambda} \binom{n}{k} \sum_{m=0}^{\lfloor (n-k)/2 \rfloor} \binom{n-k}{2m} (2m-1)!!,
\]
where $k = \abs{\lambda}$, by choosing the positions of the singletons.

We can allow $\beta = 0$ in Theorem~\ref{thm:rook_Brauer} if we slightly change the multiplication to only count the number of loops removed in the product, not paths contractible to a point.
In fact, this leads to a more general definition of the rook Brauer algebra (and its subalgebras) using two parameters $\mcR\mcB_n(\beta,\gamma)$, where $\beta$ counts the number of loops removed and $\gamma$ counts the number of contractible paths.
Thus $\mcR\mcB_n(\beta) = \mcR\mcB_n(\beta, \beta)$, and furthermore, we have isomorphic algebras $\mcR\mcB_n(\beta, \beta) \iso \mcR\mcB_n(\beta, \gamma)$ whenever $\beta, \gamma \neq 0$~\cite{HdM14,MM14}.

\begin{ex}
\label{ex:nonassoc_product}
There is a misprint in the definition of the diagram multiplication in~\cite{HdM14}, as it is not sufficient to only consider singletons for the second parameter $\gamma$.
Indeed, if we take this as the definition, then we do not have an associative algebra for general $\gamma$:
\[
\gamma^2 \,
\begin{tikzpicture}[scale = 0.5,thick, baseline={(0,-1ex/2)}] 
\tikzstyle{vertex} = [shape=circle, minimum size=2pt, inner sep=1pt]
\foreach \i in {1,2} {
    \node[vertex] (Gp\i) at (\i, 3) [shape=circle, draw] {};
    \node[vertex] (Gm\i) at (\i, -3) [shape=circle, draw] {};
}
\draw[-,color=darkred] (Gp1) .. controls +(0, -.5) and +(0, -.5) .. (Gp2);
\draw[-,color=UQpurple] (Gm1) .. controls +(0, .5) and +(0, .5) .. (Gm2);
\end{tikzpicture}
\quad = \quad
\begin{tikzpicture}[scale = 0.5,thick, baseline={(0,-1ex/2)}] 
\tikzstyle{vertex} = [shape=circle, minimum size=2pt, inner sep=1pt]
\foreach \i in {1,2} {
    \node[vertex] (Gp\i) at (\i, 3) [shape=circle, draw] {};
    \node[vertex] (G-\i) at (\i, -1) [shape=circle, draw] {};
    \node[vertex] (Gm\i) at (\i, -3) [shape=circle, draw] {};
}
\draw[-,color=darkred] (Gp1) .. controls +(0, -.5) and +(0, -.5) .. (Gp2);
\draw[-,color=UQpurple] (Gm1) .. controls +(0, .5) and +(0, .5) .. (Gm2);
\end{tikzpicture}
\quad = \quad
\begin{tikzpicture}[scale = 0.5,thick, baseline={(0,-1ex/2)}] 
\tikzstyle{vertex} = [shape=circle, minimum size=2pt, inner sep=1pt]
\foreach \i in {1,2} {
    \node[vertex] (Gp\i) at (\i, 3) [shape=circle, draw] {};
    \node[vertex] (G\i) at (\i, 1) [shape=circle, draw] {};
    \node[vertex] (G-\i) at (\i, -1) [shape=circle, draw] {};
    \node[vertex] (Gm\i) at (\i, -3) [shape=circle, draw] {};
}
\draw[-,color=darkred] (Gp1) .. controls +(0, -.5) and +(0, -.5) .. (Gp2);
\draw[-,color=blue] (G1) .. controls +(0, .5) and +(0, .5) .. (G2) -- (G-1);
\draw[-,color=UQpurple] (Gm1) .. controls +(0, .5) and +(0, .5) .. (Gm2);
\end{tikzpicture}
\quad = \quad \gamma \,
\begin{tikzpicture}[scale = 0.5,thick, baseline={(0,-1ex/2)}] 
\tikzstyle{vertex} = [shape=circle, minimum size=2pt, inner sep=1pt]
\foreach \i in {1,2} {
    \node[vertex] (Gp\i) at (\i, 3) [shape=circle, draw] {};
    \node[vertex] (G\i) at (\i, 1) [shape=circle, draw] {};
    \node[vertex] (Gm\i) at (\i, -3) [shape=circle, draw] {};
}
\draw[-,color=darkred] (Gp1) .. controls +(0, -.5) and +(0, -.5) .. (Gp2);
\draw[-,color=blue] (G1) .. controls +(0, .5) and +(0, .5) .. (G2);
\draw[-,color=UQpurple] (Gm1) .. controls +(0, .5) and +(0, .5) .. (Gm2);
\end{tikzpicture}
\quad = \quad \beta \gamma \,
\begin{tikzpicture}[scale = 0.5,thick, baseline={(0,-1ex/2)}] 
\tikzstyle{vertex} = [shape=circle, minimum size=2pt, inner sep=1pt]
\foreach \i in {1,2} {
    \node[vertex] (Gp\i) at (\i, 3) [shape=circle, draw] {};
    \node[vertex] (Gm\i) at (\i, -3) [shape=circle, draw] {};
}
\draw[-,color=darkred] (Gp1) .. controls +(0, -.5) and +(0, -.5) .. (Gp2);
\draw[-,color=UQpurple] (Gm1) .. controls +(0, .5) and +(0, .5) .. (Gm2);
\end{tikzpicture}
\]
\end{ex}

Given the two parameters, we have the following statement and problem (\textit{cf.}~\cite{MM14,DG22II}).

\begin{prop}
\label{prop:two_param_rook_brauer_iso}
For all $\gamma \neq 0$, we have $\mcR\mcB_n(0, \gamma) \iso \mcR\mcB_n(0, 1)$.
\end{prop}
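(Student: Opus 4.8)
The plan is to exhibit an explicit algebra isomorphism that is diagonal in the natural diagram basis, rescaling each basis diagram by a power of $\gamma$ chosen to absorb the $\gamma$-factors produced during multiplication. Concretely, for a basis diagram $d$ let $e(d)$ denote the number of its blocks of size $2$ (through strands, cups, and caps together) and set $g(d) := n - e(d)$; equivalently, $g(d)$ is half the number of singleton blocks of $d$, which is an integer since the numbers of top and bottom singletons are each congruent to $n$ minus the number of propagating blocks modulo $2$, so their sum is even. I would then define $\Phi \colon \mcR\mcB_n(0,\gamma) \to \mcR\mcB_n(0,1)$ on basis diagrams by $\Phi(d) = \gamma^{g(d)} d$ and extend linearly. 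Since $\gamma \neq 0$, the map $\Phi$ is a bijective $\field$-linear map (diagonal with invertible entries and inverse $d \mapsto \gamma^{-g(d)} d$), and it sends the identity diagram (all through strands, so $e = n$ and $g = 0$) to the identity. It therefore remains only to verify that $\Phi$ is multiplicative.

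First I would reduce multiplicativity to a single combinatorial identity. Writing the product of two basis diagrams $A, B$ in $\mcR\mcB_n(\beta, \gamma)$ as $\beta^{D_\beta} \gamma^{D_\gamma} (A * B)$, where $A * B$ is the reduced diagram obtained by stacking and deleting all interior components, $D_\beta$ counts the interior loops, and $D_\gamma$ counts the contractible interior path components, observe that both source and target algebras impose $\beta = 0$. Hence whenever $D_\beta > 0$ the products in both algebras vanish and $\Phi(A \cdot B) = 0 = \Phi(A) \cdot \Phi(B)$ trivially. When $D_\beta = 0$, unwinding $\Phi(A \cdot B)$ and $\Phi(A)\cdot\Phi(B)$ shows that multiplicativity is equivalent to
\[
g(A) + g(B) = g(A * B) + D_\gamma.
\]

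The hard part will be establishing this conservation identity, which is where the careful diagram bookkeeping lives. I would prove the equivalent statement that the number of singleton blocks $s(\,\cdot\,)$ satisfies $s(A) + s(B) = s(A * B) + 2 D_\gamma$. The key observation is that, because all blocks have size at most $2$, the connected components of the glued middle row are paths and cycles: a cycle is an interior loop (counted by $D_\beta$ and involving no singletons), a path meeting the outer boundary becomes a block of $A * B$, and a path meeting neither outer boundary is precisely a contractible interior component counted by $D_\gamma$. Such an interior path has exactly two endpoints, and each endpoint must terminate at a singleton of $A$ or of $B$ (it cannot terminate at a through strand, as that would reach the outer boundary); moreover these two singletons are consumed and do not appear in $A * B$. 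Conversely, every other interaction at the middle row -- a singleton meeting a through strand, or two strands concatenating -- either preserves a singleton or converts it into a boundary singleton of $A * B$, leaving the total count unchanged. Thus each contractible interior component accounts for exactly two ``lost'' singletons, giving the factor $2 D_\gamma$. I expect this case analysis of the gluing, rather than any algebraic subtlety, to be the main obstacle; a clean way to organize it is to classify each of the $n$ middle vertices by its type above (through/cup/singleton) and below (through/cap/singleton) and trace the resulting components.

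Finally, I would note that the integrality of $g$ (hence the absence of any need for square roots of $\gamma$) is exactly what makes the argument valid over an arbitrary field $\field$, and that the statistic $g$ reflects the corrected accounting of contractible components highlighted in Example~\ref{ex:nonassoc_product}: counting only interior singletons rather than all contractible interior paths would break the identity $g(A)+g(B) = g(A*B) + D_\gamma$, and hence the well-definedness of $\Phi$.
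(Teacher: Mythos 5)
Your proposal is correct and is essentially the paper's own argument: the paper's proof is the one-line remark ``Same as the case $\beta \neq 0$,'' which refers to the standard isomorphism $\mcR\mcB_n(\beta,\beta) \iso \mcR\mcB_n(\beta,\gamma)$ obtained by rescaling each diagram by a power of $\gamma$ in half the number of its singletons, exactly the map $\Phi(d) = \gamma^{g(d)} d$ you construct. Your write-up simply supplies the details the paper leaves implicit, namely the conservation identity $g(A)+g(B) = g(A*B) + D_\gamma$ and the observation that it is insensitive to whether $\beta$ vanishes.
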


\begin{proof}
Same as the case $\beta \neq 0$.
\end{proof}

\begin{problem}
\label{prob:two_param_rook_brauer_semisimple}
Determine when $\mcR\mcB_n(0, \gamma)$ and $\mcR\mcB_n(\beta, 0)$ are semisimple.
\end{problem}

We remark that the second parameter can never occur in the Brauer algebra as we can only have loops in the product.

\subsection{Rook algebra}

The \defn{rook algebra} $\mcR_n(\beta) \subseteq \mcP_n(\beta)$ is the set of diagrams $\rho$ such that every block has size at most $2$ and $N(\rho), N'(\rho) \leq 1$.
In other words, every block of size $2$ must be propagating.
An alternative description is in terms of partial permutations of $[n]$, which are injective maps $\pi \colon D \to [n]'$ for some $D \subseteq [n]$.
Furthermore, by rescaling by $\beta^{-I}$, where $I$ is the number of isolated in one half of the diagram (note that we necessarily have the number of isolated vertices on each side being equal), we can see $\mcR_n(\beta) \iso \mcR_n(1)$ for all $\beta \neq 0$.

This was first introduced and the irreducible representations studied by Munn~\cite{Munn57}\footnote{Munn in~\cite{Munn57} considered it as a monoid algebra, and so $\beta = 1$.} and later refined by Solomon~\cite{Solomon02}.
Like for the (rook) Brauer algebra, we have $\Lambda$ as for the partition algebra, but $M(\lambda)$ consists only of the standard Young tableau with entries consisting of a single entry.
This is just like the classical case when we restrict to a specific subset of $[n]$ of size $\abs{\lambda}$, which correspond to the propagating blocks.

As a consequence, we obtain that
\begin{equation}
\label{eq:rook_dim}
\dim \mcR_n(\beta) = \sum_{k=0}^n \sum_{\mu \vdash k} \binom{n}{\abs{\mu}}^2 f_{\mu}^2 = \sum_{k=0}^n \binom{n}{k}^2 k!,
\end{equation}
where $f_{\mu}$ is equal to the number of standard Young tableau on $[k]$ of shape $\mu \vdash k$.
We have factored out the choice of subset in the binomial coefficient and used Equation~\eqref{eq:fla=n2}.
The dimension formula in Equation~\eqref{eq:rook_dim} is also known from the original definition of the rook monoid through a straightforward combinatorial argument; see, \textit{e.g.},~\cite[Eq.~(1.2)]{Solomon02}.

We remark that we cannot have any loops to remove in the product of the rook algebra.
As such, we have $\mcR_n(\gamma) \subseteq \mcR\mcB_n(\beta,\gamma)$ and $\beta$ is not involved.

In~\cite{DG21}, it was shown that the centralizer algebra of the $n$-fold tensor power of the (unreduced) Burau representation $\mathbf{F}$ of the braid group $\mathbf{B}_m$ with $m$ strands is isomorphic to $\mcR_n([m]_q)$ for $q$ not a root of unity and $m > 2n$.
It is generated by $\sigma_i$ being the simple crossing of the $i$-th strand over the $(i+1)$-th strand, and these satisfy the braid relations $\sigma_i \sigma_{i+1} \sigma_i = \sigma_{i+1} \sigma_i \sigma_{i+1}$.
However, the braid group has a number of interesting subgroups:
\begin{itemize}
\item The affine braid group $\widetilde{\mathbf{B}}_{m-1}$ of $m- 1 $ braids in a cylinder by considering the center hole to be an additional (fixed) strand.
\item If $m = 2M - 1$ is odd, the type $B_M$ Artin group is $\langle \sigma_1 \sigma_m, \sigma_2 \sigma_{m-1}, \dotsc, \sigma_{M-1} \sigma_{M+1}, \sigma_M \rangle$ as it can be easily checked that $\tau_{M-1} \sigma_M \tau_{M-1} \sigma_M  = \sigma_M  \tau_{M-1} \sigma_M \tau_{M-1}$ for $\tau_{M-1} := (\sigma_{M-1} \sigma_{M+1})$.
\item The pure braid group $\mathbf{PB}_m$, which can be defined by the short exact sequence $0 \to \mathbf{PB}_m \to \mathbf{B}_m \to \Sym_m \to 0$ using the natural projection.
\item The subgroup $\mathbf{R}_m = \langle \sigma_i^2 \rangle$, which is a right-angled Artin group (also known as a partially commutative group) of a line\footnote{Some conventions in the literature use the complement graph.} by~\cite{Collins94,CP01,Humphries94}.
\end{itemize}

There are other subgroups of these subgroups.
For example, the affine braid group $\widetilde{\mathbf{B}}_{m-1}$ has the pure affine braid group $\widetilde{\mathbf{PB}}_{m-1}$ with the short exact sequence $0 \to \widetilde{\mathbf{PB}}_{m-1} \to \widetilde{\mathbf{B}}_{m-1} \to \widetilde{\Sym}_{m-1} \to 0$, where $\widetilde{\Sym}_{m=1}$ is the affine symmetric group; the right-angled Artin group of a circle generated by the square of generators of $\widetilde{\mathbf{B}}_{m-1}$; and the affine type $\widetilde{C}_M$ braid group (constructed similarly to the type $B_M$ inside $\mathbf{B}_{2M-1}$).
Right-angled Artin groups are known to have many interesting subgroups~\cite{BB97}, and $\mathbf{R}_m$ naturally projects onto the twin group studied in~\cite{DG22II}.
(Some surveys on general (right-angled) Artin groups are~\cite{Charney07,McCammond17}.)

\begin{problem}
Determine the centralizer algebra for the subgroups of the braid group mentioned above acting on $\mathbf{F}^{\otimes n}$.
Furthermore, classify the subgroups where the centralizer algebra is isomorphic to a subalgebra of $\mcP_n([m]_q)$.
\end{problem}

It is possible that the techniques used in~\cite{DG22II} can be applied to any projection from an Artin group to its corresponding Coxeter group, where we consider the Burau representation as Hecke algebra representation.

\section{Planar algebras}

A diagram is \defn{planar} if it can be drawn without crossings.
A \defn{planar algebra} is a subalgebra of the partition algebra if it has a basis given by planar diagrams.
We will sometimes use the term ``noncrossing'' instead of ``planar'' in this manuscript.
For any planar algebra, we necessarily have for each diagram the corresponding permutation $\sigma = 1$, but this does not completely classify planar diagrams in general.
Furthermore, since the diagrams are  noncrossing, the bilinear form of~\cite[Lemma~4.3]{Xi99} maps to the $\field$-span of the trivial permutation.
Thus, each diagram is an element of the cellular basis for $\mcP_n(\beta)$ given in Theorem~\ref{thm:partition_cellular}.
Proposition~\ref{prop:subcellular} yields the following.

\begin{thm}
\label{thm:planar_cellular}
Any planar algebra on $[n] \sqcup [n]'$ fixed under $\iota$ is cellular with a cell basis given by the natural diagram basis and $\Lambda$ being a graded poset, where the grading is the number of defects.
Moreover, there is a unique top element in $\Lambda$ of rank $n$ corresponding to $1 \in \mcP$, and if there exists an element of rank $0$ in $\Lambda$, it is unique.
\end{thm}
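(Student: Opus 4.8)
The plan is to obtain cellularity formally from Proposition~\ref{prop:subcellular} and then extract the poset data from the half-diagram decomposition~\eqref{eq:partition_decomposition}. By hypothesis the planar algebra $\mcP$ is invariant under $\iota$, so the only thing to check before applying Proposition~\ref{prop:subcellular} is that the diagram basis of $\mcP$ is a subset of the cell basis $C$ of $\mcP_n(\beta)$ from Theorem~\ref{thm:partition_cellular}. This is precisely the point established just before the statement: a noncrossing diagram has trivial middle permutation $\sigma = 1$, and since gluing two noncrossing half-diagrams can never create a crossing, the bilinear form of~\cite[Lemma~4.3]{Xi99} lands in $\field \cdot 1$; consequently a planar diagram is already a cell-basis vector rather than a nontrivial $\field[\Sym_k]$-combination of them. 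Proposition~\ref{prop:subcellular} then produces a cell datum for $\mcP$ by restricting $(\Lambda, \iota, M, C)$ to the indices occurring among planar diagrams, with $\iota$ the vertical reflection.

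Next I would pin down the grading. The cell label $\lambda$ attached to a planar diagram satisfies $\abs{\lambda} = k$, where $k$ is its number of propagating blocks, i.e.\ its number of defects in~\eqref{eq:partition_decomposition}. The two-sided ideals $\mcP^{\le k} := \Span_\field\{\text{planar diagrams with at most } k \text{ defects}\}$ furnish the cellular filtration of Remark~\ref{rem:assoc_graded}, so the restricted poset $\Lambda$ is graded by the number of defects, with a larger defect count lying higher.

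For the extreme ranks I would argue combinatorially. Since every algebra here is unital and the identity diagram is planar, we have $1 \in \mcP$, and $1$ has $n$ defects. Conversely, a diagram with $n$ defects has $n$ propagating blocks spread over the $n$ points of $[n]$ and the $n$ points of $[n]'$, so each block is a single through-strand; being noncrossing forces this matching to be the identity. Hence rank $n$ is realized by the single diagram $1$, giving a unique top element of $\Lambda$ corresponding to $1 \in \mcP$. At the other end, any index of rank $0$ is a partition of $0$, and there is only the empty one (equivalently the middle factor is $\field[\Sym_0] = \field$); so if some planar diagram of $\mcP$ has no propagating block, the resulting rank-$0$ index is unique.

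The main obstacle is the reduction in the first paragraph: justifying that a noncrossing diagram is \emph{literally} an element of the cell basis $C$, and not merely an element whose image in the associated graded spreads over several $\Sym_k$-cells. This is exactly where the noncrossing hypothesis does the real work, through the fact that the form of~\cite[Lemma~4.3]{Xi99} takes values in $\field \cdot 1$; once this is granted, both cellularity and the poset description are routine. As a sanity check I would verify the conclusion against the Temperley--Lieb algebra, where $\Lambda$ is the chain of through-strand counts, the top is the identity, and a rank-$0$ element appears exactly when $n$ is even.
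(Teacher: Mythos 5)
Your cellularity argument and your treatment of the top rank follow the paper's own route: the observations that a planar diagram has trivial middle permutation and that the form of~\cite[Lemma~4.3]{Xi99} lands in the span of the trivial permutation are exactly how the paper concludes that each planar diagram is itself a cell basis vector of $\mcP_n(\beta)$, after which Proposition~\ref{prop:subcellular} is invoked; likewise, your identification of the unique $n$-defect diagram with the identity is the content of the paper's remark that the rank-$n$ cell corresponds to half of the identity element.

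The rank-$0$ claim is where you genuinely deviate, and there is a gap. You argue uniqueness by counting labels: a rank-$0$ index is a partition of $0$, of which there is only one. This presupposes that $\Lambda$ is literally the restricted set of partition-algebra labels. But that coarse labeling cannot be what the theorem means: under it, \emph{every} rank would carry at most one label (only the identity of $\Sym_k$ ever occurs in the middle factor), so both ``Moreover'' claims would be vacuous; worse, the coarse labeling does not even furnish a valid cell datum for every planar algebra, because the set of $k$-defect diagrams need not be a full product $M(\lambda) \times M(\lambda)$ of half diagrams. Concretely, in the planar even algebra $\mcP\mcE_4(\beta)$, gluing a half diagram whose defect blocks have sizes $(1,1)$ to one whose defect blocks have sizes $(2,2)$ creates a block with one top vertex and two bottom vertices, violating the parity condition; this is precisely why the paper's $\Lambda$ for $\mcP\mcE_n(\beta)$ is the set of words in $\{1,2\}$, with several cells of the same rank. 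In such refined posets the uniqueness of the rank-$0$ cell is a genuine structural statement, and the paper proves it by an absorption argument: fixing a $0$-defect diagram $\{\rho_0, \rho_0'\}$, its composition with any $0$-defect set partition $\nu$ of $[n]'$ returns (a power of $\beta$ times) $\rho_0'$, so the algebra action ties every $0$-defect half diagram to $\rho_0'$ and the bottom stratum cannot split into distinct cells, there being no lower-order terms to absorb the discrepancy. Your proposal should replace the label-counting step with this argument.
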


Note that the element of rank $n$ corresponds to half of the identity element.
For the element of rank $0$, fix some set partition
$\rho_0$ (resp. $\rho_0'$) of $[n]$ (resp.~$[n]'$) with $0$ defects.
Then we can compose any $0$-defect set partition $\nu$ of $[n]'$ with $\{\rho_0, \rho_0'\}$, and the result is $\rho_0'$.

Consequently, we will always take the trivial $\field [\Sym_k]$ module to build the cell modules.
Thus, in each case where we restrict to the planar subalgebra, we can define the basis of the cell modules by the corresponding noncrossing set partitions (with some fixed number of designated propagating blocks).
Furthermore, for generic $\beta$, the pictorial description of bilinear form being $0$ is also a sufficient condition as in the resulting diagram $\Delta$, the result is simply $\beta^D$, where $D$ is the number of interior components (\textit{i.e.}, not containing a matching pair of defects).

\begin{ex}
If we have the pairing of $\Phi_{\lambda}(C_U, C_T)$ given pictorally as
\[
\begin{tikzpicture}[scale = 0.5,thick, baseline=10pt] 
\tikzstyle{vertex} = [shape=circle, minimum size=2pt, inner sep=1pt]
\foreach \i in {1,...,21} {
    \node[vertex] (G\i) at (\i, 2) [shape=circle, draw] {};
}
\draw[-,color=darkred] (G1) + (0,-1) -- (G1) .. controls +(0, -.5) and +(0, -.5) .. (G2) .. controls +(0, -.6) and +(0, -.6) .. (G4) .. controls +(0, 1.1) and +(0, 1.1) .. (G9);
\draw[-,color=darkred] (G2) -- ++(0,1);
\draw[-,color=UMNgold] (G5) + (0,-1) -- (G5) .. controls +(0, -.6) and +(0, -.6) .. (G7) .. controls +(0, -1.3) and +(0, -1.3) .. (G12);
\draw[-,color=UMNgold] (G12) .. controls +(0, .6) and +(0, .6) .. (G10) -- ++(0,1);
\draw[-,color=blue] (G6) .. controls +(0, .6) and +(0, .6) .. (G8) .. controls +(0, -.75) and +(0, -.75) .. (G11);
\draw[-,color=UQpurple] (G13) + (0,1) -- (G13) .. controls +(0, 1.3) and +(0, 1.3) .. (G21)  .. controls +(0, -.9) and +(0, -.9) .. (G18) .. controls +(0, .5) and +(0, .5) ..  (G17) .. controls +(0, -1.1) and +(0, -1.1) .. (G13);
\draw[-,color=UQpurple] (G18) -- ++(0,-1);
\draw[-,color=OCUenji] (G15) .. controls +(0, .5) and +(0, .5) .. (G14) .. controls +(0, -.5) and +(0, -.5) .. (G15)  .. controls +(0, .9) and +(0, .9) .. (G19);
\end{tikzpicture}
\]
then we have $\Phi_{\lambda}(C_U, C_T) = \beta^5$.
\end{ex}

We note that the description of the cell modules is essentially the same as the construction given in~\cite[Thm.~3.21]{HJ20} for the planar algebras considered there.
In turn, this yields the tableaux description by following the construction in Section~\ref{sec:crt}.

\subsection{Temperley--Lieb and planar partition algebras}

We begin with the Temperley--Lieb algebra $\mcT\mcL_n(\beta)$~\cite{TL71}, which was surveyed in~\cite{RSA14}.
This is spanned by the set of diagrams such that all parts in the set partition have size $2$, which is also called a perfect matching and it counted by the even Catalan numbers
\begin{equation}
\label{eq:TL_dim}
\dim \mcT\mcL_n(\beta) = C_N = \frac{1}{N+1}\binom{2N}{N},
\end{equation}
where $N = 2n$.
Thus, we see the cell modules built from the partition algebra are a reformulation of the link modules~\cite{GL96} (see also Remark~\ref{rem:assoc_graded}) with the simple modules indexed by $\Lambda = \{n - 2k \mid k \in \langle n \rangle \}$ under the natural order, where $\langle n \rangle := \{0,1,\dotsc,\lfloor n/2 \rfloor\}$.\footnote{In~\cite{GL96,RSA14}, instead of the $\Lambda$ given here, the authors used the set $\langle n \rangle$ to index the cell modules, which correspond to the number of defects instead of the number of non-defect entries as described here.}
Precise conditions on $\beta$ when $\mcT\mcL_n(\beta)$ is semisimple can be found in, \textit{e.g.},~\cite[Thm~4.7]{RSA14} and~\cite[Thm.~A.1]{DG22} (which interprets results in~\cite{GdlHJ89}).

For any half diagram for $M(\lambda)$, the number of defects is equal to $\lambda$.
Furthermore, we see that the dimensions of the cell modules are the triangle Catalan numbers
\[
\dim W(\lambda) = \abs{M(\lambda)} = C_{N,k} = \binom{N+k}{k} - \binom{N+k}{k-1} = \frac{N+k-1}{N+1} \binom{N+k}{N},
\]
where $\lambda = 2n - k$ and $N = n - k$.
In particular, when $\lambda = 0,1$, this is precisely the $\lfloor n/2 \rfloor$-th Catalan number.
We also have that $\dim W(\lambda)$ counts the number of Dyck paths of length $n$ that end at height $\lambda$.
Furthermore, Equation~\eqref{eq:dim_formula} yields the identity
\begin{equation}
\label{eq:catalan_square}
C_{2n} = \sum_{k=0}^{\lfloor n/2 \rfloor} C_{n-k,k}^2.
\end{equation}

Next, we consider the planar partition algebra $\mcP\mcP_n(\beta)$.
For $\beta \neq 0$, this reduces to the case of the Temperley--Lieb algebra by the following well-known result (see, \textit{e.g.},~\cite{Jones94,HR05}).

\begin{thm}
\label{thm:TL_planer_iso}
For $\beta \neq 0 $, we have
\[
\mcP\mcP_n(\beta^2) \iso \mcT\mcL_{2n}(\beta).
\]
\end{thm}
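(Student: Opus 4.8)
The plan is to realize the isomorphism diagrammatically through the classical \emph{doubling} (or \emph{thickening}) construction of Jones and Martin. First I would split each of the $2n$ vertices of a diagram of $\mcP\mcP_n$ into an adjacent pair, so that the $n$ top (resp.\ bottom) vertices become $2n$ top (resp.\ bottom) vertices, and then thicken each block of a planar set partition $\pi$ into a planar band whose boundary is a disjoint union of noncrossing arcs. Reading off these boundary arcs produces a perfect matching $D(\pi)$ of $[2n] \sqcup [2n]'$ that is again planar, hence a diagram of $\mcT\mcL_{2n}$. On basis elements this is the classical bijection between noncrossing partitions of $2n$ points and noncrossing perfect matchings of $4n$ points; in particular $D$ is a bijection of the diagram bases, which already matches dimensions since both sides are counted by the same Catalan number (\textit{cf.}\ Equation~\eqref{eq:TL_dim}). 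Since each diagram is an element of the cellular basis by Theorem~\ref{thm:planar_cellular}, $D$ is in fact a bijection of cellular bases.

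The doubling map is \emph{not} by itself multiplicative: a single interior component removed in a product of $\mcP\mcP_n(\beta^2)$ contributes the scalar $\beta^2$, whereas the loops in the doubled $\mcT\mcL_{2n}(\beta)$ product contribute a power of $\beta$ that need not be $\beta^2$ (for instance a thickened singleton closes up into a single loop, while a thickened cap--cup bigon closes up into two). To repair this I would rescale and set $\Phi(\pi) := \beta^{\,\abs{\pi} - n}\, D(\pi)$, where $\abs{\pi}$ denotes the number of blocks of $\pi$, extended $\field$-linearly. This is well defined precisely because $\beta \neq 0$ (the block count ranges from $1$ to $2n$, so the exponent can be negative), which is exactly the hypothesis of the statement; it sends the identity diagram, which has its $n$ propagating blocks and hence $\abs{\pi}=n$, to the identity of $\mcT\mcL_{2n}$, so $\Phi$ is unital. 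Being a rescaling of a bijection by nonzero scalars, $\Phi$ is bijective.

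It then remains to show $\Phi$ is an algebra homomorphism, and this is the heart of the argument. Writing a product of two planar diagrams as $\pi_1 \cdot \pi_2 = \beta^{2D}\,\pi_{12}$ in $\mcP\mcP_n(\beta^2)$, where $D$ is the number of interior components removed, and $D(\pi_1)\cdot D(\pi_2) = \beta^{L}\,D(\pi_{12})$ in $\mcT\mcL_{2n}(\beta)$, where $L$ is the number of loops removed, multiplicativity of $\Phi$ amounts to the single identity
\[
L = 2D + \abs{\pi_{12}} - \abs{\pi_1} - \abs{\pi_2} + n.
\]
The main obstacle is establishing this loop-count identity. I would prove it by an Euler-characteristic argument on the stacked, thickened diagram: the loops of the $\mcT\mcL_{2n}$ product are exactly the boundary circles of the ribbon surface obtained by thickening the middle portion of $\pi_1 \cdot \pi_2$, so $L$ is governed by the numbers of (doubled) vertices, bands, and faces of that surface, and the normalization $\beta^{\abs{\pi}-n}$ is engineered so that the vertex/band contributions telescope against the block counts $\abs{\pi_1}, \abs{\pi_2}, \abs{\pi_{12}}$; one checks the two model cases of an interior singleton and an interior bigon and then propagates by additivity over the connected pieces of the middle. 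Once the identity is verified, together with the compatibility $D(\pi_1 \cdot \pi_2) = D(\pi_1)\cdot D(\pi_2)$ of the underlying matchings, $\Phi$ is a unital bijective algebra homomorphism, giving the desired isomorphism $\mcP\mcP_n(\beta^2) \iso \mcT\mcL_{2n}(\beta)$.
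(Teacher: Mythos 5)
Your proposal is correct in substance but takes a genuinely different route from the paper. The paper's proof is presentation-based: it sends the generators $p_i \mapsto \beta e_{2i-1}$ and $b_i \mapsto \beta^{-1} e_{2i}$, asserts that the images satisfy the required relations, and concludes from surjectivity onto the generators of $\mcT\mcL_{2n}(\beta)$ together with the equality of dimensions (the same Catalan count you invoke). Your map is in fact the \emph{same} homomorphism --- note that $p_i$ has $n+1$ blocks and $b_i$ has $n-1$ blocks, so your rescaling $\beta^{\abs{\pi}-n}$ reproduces exactly the factors $\beta$ and $\beta^{-1}$ --- but you define it in closed form on every diagram and prove multiplicativity directly, reducing it to the loop-count identity $L = 2D + \abs{\pi_{12}} - \abs{\pi_1} - \abs{\pi_2} + n$. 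That identity is precisely equivalent to multiplicativity of $\Phi$, and it is true. What your approach buys: it needs no presentation of either algebra (which the paper deliberately never writes down), it describes the isomorphism on the whole diagram basis rather than on generators, and it makes visible exactly where $\beta \neq 0$ enters. What the paper's approach buys: brevity, since once the presentations are granted the relation check and the dimension count are short.

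The one place your sketch understates the work is the proof of the loop-count identity itself. Checking the interior singleton and the interior bigon and then ``propagating by additivity'' is not sufficient, because loops do \emph{not} arise only from thickened interior components: a component that does reach the top or bottom frame can also shed closed loops (for instance, when a block of $\pi_1$ and a block of $\pi_2$ are glued at two middle vertices, the thickened component is an annulus whose inner boundary circle is a loop even though its outer circle reaches the frame). The correct bookkeeping is per connected component $c$ of the stacked diagram: with $V_c$ blocks and $E_c$ middle gluings, the thickened $c$ is a genus-zero surface with $2 - V_c + E_c$ boundary circles, of which exactly one meets the frame if $c$ touches it and none otherwise; summing over the $D$ interior and $\abs{\pi_{12}}$ frame-touching components, using $\sum_c V_c = \abs{\pi_1} + \abs{\pi_2}$ and $\sum_c E_c = n$, yields your identity. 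The assertion that a frame-touching component has \emph{exactly one} frame-meeting boundary circle, together with the compatibility $D(\pi_1)D(\pi_2) \propto D(\pi_{12})$ of the underlying matchings, is the real content here and requires a planarity (Jordan-curve) argument; both statements are true, so your proof does complete, but they need to be proved rather than inferred from the two model cases.
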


Roughly speaking, the isomorphism is to consider the outline of a thickened version of the planar partition algebra.
Making this precise, for $\beta \neq 0$, the isomorphism is given by
\begin{align*}
p_i =
\begin{tikzpicture}[scale = 0.5,thick, baseline={(0,-1ex/2)}] 
\tikzstyle{vertex} = [shape=circle, minimum size=2pt, inner sep=1pt]
\foreach \i in {1,3,5,6,8} {
    \node[vertex] (G\i) at (\i, 1) [shape=circle, draw] {};
    \node[vertex] (G-\i) at (\i, -1) [shape=circle, draw] {};
    \draw[-] (G\i) -- (G-\i);
}
\node[vertex] (G4) at (4, 1) [shape=circle, draw] {};
\node[vertex] (G-4) at (4, -1) [shape=circle, draw] {};
\draw (2,1) node {$\cdots$};
\draw (2,-1) node {$\cdots$};
\draw (7,1) node {$\cdots$};
\draw (7,-1) node {$\cdots$};
\end{tikzpicture}
& \mapsto
\beta \cdot
\begin{tikzpicture}[scale = 0.5,thick, baseline={(0,-1ex/2)}] 
\tikzstyle{vertex} = [shape=circle, minimum size=2pt, fill=red, inner sep=1pt]
\foreach \i in {1,3,5,6,8} {
    \node[vertex] (G\i) at (\i-.2, 1) [shape=circle, draw] {};
    \node[vertex] (Gp\i) at (\i+.2, 1) [shape=circle, draw] {};
    \node[vertex] (G-\i) at (\i-.2, -1) [shape=circle, draw] {};
    \node[vertex] (Gp-\i) at (\i+.2, -1) [shape=circle, draw] {};
    \draw[-] (G\i) -- (G-\i);
    \draw[-] (Gp\i) -- (Gp-\i);
}
\node[vertex] (G4) at (4-.2, 1) [shape=circle, draw] {};
\node[vertex] (Gp4) at (4+.2, 1) [shape=circle, draw] {};
\node[vertex] (G-4) at (4-.2, -1) [shape=circle, draw] {};
\node[vertex] (Gp-4) at (4+.2, -1) [shape=circle, draw] {};
\draw[-] (G4) .. controls +(0.1, -.25) and +(-0.1, -.25) .. (Gp4);
\draw[-] (G-4) .. controls +(0.1, .25) and +(-0.1, .25) .. (Gp-4);
\draw (2,1) node {$\cdots$};
\draw (2,-1) node {$\cdots$};
\draw (7,1) node {$\cdots$};
\draw (7,-1) node {$\cdots$};
\end{tikzpicture}
= \beta e_{2i-1},
\\
b_i =
\begin{tikzpicture}[scale = 0.5,thick, baseline={(0,-1ex/2)}] 
\tikzstyle{vertex} = [shape=circle, minimum size=2pt, inner sep=1pt]
\foreach \i in {1,3,4,5,6,8} {
    \node[vertex] (G\i) at (\i, 1) [shape=circle, draw] {};
    \node[vertex] (G-\i) at (\i, -1) [shape=circle, draw] {};
}
\foreach \i in {1,3,6,8} {
    \draw[-] (G\i) -- (G-\i);
}
\draw (2,1) node {$\cdots$};
\draw (2,-1) node {$\cdots$};
\draw (7,1) node {$\cdots$};
\draw (7,-1) node {$\cdots$};
\draw[-] (G4) .. controls +(0, -.5) and +(0, -.5) .. (G5) -- (G-5) .. controls +(0, .5) and +(0, .5) .. (G-4) -- (G4);
\end{tikzpicture}
& \mapsto
\beta^{-1} \cdot
\begin{tikzpicture}[scale = 0.5,thick, baseline={(0,-1ex/2)}] 
\tikzstyle{vertex} = [shape=circle, minimum size=2pt, fill=red, inner sep=1pt]
\foreach \i in {1,3,4,5,6,8} {
    \node[vertex] (G\i) at (\i-.2, 1) [shape=circle, draw] {};
    \node[vertex] (Gp\i) at (\i+.2, 1) [shape=circle, draw] {};
    \node[vertex] (G-\i) at (\i-.2, -1) [shape=circle, draw] {};
    \node[vertex] (Gp-\i) at (\i+.2, -1) [shape=circle, draw] {};
}
\foreach \i in {1,3,6,8} {
    \draw[-] (G\i) -- (G-\i);
    \draw[-] (Gp\i) -- (Gp-\i);
}
\draw[-] (Gp4) .. controls +(0.1, -.25) and +(-0.1, -.25) .. (G5);
\draw[-] (Gp-4) .. controls +(0.1, .25) and +(-0.1, .25) .. (G-5);
\draw[-] (G4) -- (G-4);
\draw[-] (Gp5) -- (Gp-5);
\draw (2,1) node {$\cdots$};
\draw (2,-1) node {$\cdots$};
\draw (7,1) node {$\cdots$};
\draw (7,-1) node {$\cdots$};
\end{tikzpicture}
= \beta^{-1} e_{2i}.
\end{align*}
It is a simple direct check to see this is a morphism as it satisfies the Temperley-Lieb algebra relations.
Since the image is the generating set of $\mcT\mcL_{2n}(\beta)$ and the dimensions of the two algebras are equal, this is an isomorphism.
(Alternatively, the map is clearly invertible that maps to the generating set of $\mcP\mcP_n(\beta)$.)

For $\beta = 0$ (or if we wanted to work over more general rings, when $\beta$ is not a unit), the map above does not make sense.
However, based upon computations done using \textsc{SageMath}~\cite{sage} with $\field = \QQ$ (this should have no dependence on the field since all structure coefficients are either $1$ or $0$) and vague claims in the literature, there should still be an isomorphism at $\beta = 0$.
The author does not know of any proof and could not construct an explicit isomorphism for $n = 2$ over $\CC$.

\begin{problem}
\label{prob:TL_planar_zero}
Determine if Theorem~\ref{thm:TL_planer_iso} holds at $\beta = 0$.
\end{problem}

Therefore, by Theorem~\ref{thm:TL_planer_iso} and assuming it also holds for $\beta = 0$, the semisimplicity of $\mcP\mcP_n(\beta)$ is determined by the Temperley--Lieb algebra $\mcT\mcL_{2n}(\sqrt{\beta})$ description from~\cite{RSA14} as we can always extend $\field$ to include $\sqrt{\beta}$ by Remark~\ref{rem:algebraic_closure}.
It is possible these algebras are not isomorphic at $\beta = 0$ but satisfy the weaker statement of being Morita equivalent, which would be sufficient for the purposes of this article since we are only interested in properties of the representations.
A proof of the Morita equivalence should be a consequence of the description of the bilinear forms $\Phi_{\lambda}$ from~\cite[Sec.~4]{RSA14} with suitable modifications for the natural basis of $\mcP\mcP_n(\beta)$.

It is known that the $\beta = 0$ case is often one of the more interesting cases (see, \textit{e.g.},~\cite{RSA14} and references therein).
Other interesting behaviors can occur for $\beta = \pm (q + q^{-1})$ with $q$ being a root of unity, which covers all of the cases when the Temperley--Lieb algebra is possibly not semisimple (\textit{e.g.}, $q = \sqrt{-1}$ gives $\beta = 0$).
For example, in~\cite{ILZ19}, the Temperley--Lieb algebra $\mcT\mcL_n\bigl(\pm(q+q^{-1})\bigr)$ at $q$ being a root of unity was related to the fusion category generated by tilting modules of $U_q(\fsl_2)$; see also~\cite{AST18}.

\subsection{Planar uniform block algebra}

The planar subalgebra $\mcP\mcU_n$ of $\mcI\mcU_n \subseteq \mcU_n$ is simply the subalgebra of planar idempotents of $\mcU_n$ (see Section~\ref{sec:uniform_block}).
Therefore, the basis is indexed by compositions of $n$.
For such a composition $\mu = (\mu_1, \dotsc, \mu_{\ell})$, the basis element is indexed by the idempontents of the form
\[
\bigl\{ \{1, \dotsc, \mu_1\}, \{\Psi_1+1, \dotsc, \Psi_1+\mu_2\}, \dotsc, \{\Psi_{\ell-1}+1, \dotsc, n\} \bigr\},
\]
where $\Psi_k = \mu_1 + \cdots + \mu_k$, to the same primed set partition.
Consequently, we see that
\begin{equation}
\label{eq:planar_uniform_dim}
\dim \mcP\mcU_n = 2^{n-1},
\end{equation}
and by Proposition~\ref{prop:uniform_idempotent_cellular}, all of the cell modules are one dimensional irreducible representations.

\subsection{Planar rook algebra}

The planar rook algebra $\mcP\mcR_n(\beta)$ was studied in~\cite{FHH09} and consists of planar diagrams with either singletons or propagating blocks.
As a consequence, the planar condition is now equivalent to having a corresponding permutation $\sigma = 1$.

\begin{cor}
The planar rook algebra $\mcP\mcR_n(\beta)$ is a cellular algebra with cell data given by $\Lambda = \{0,1,\dotsc,n\}$ and $M(\lambda)$ being the subsets of $[n]$ of size $\lambda$.
The Gram matrix for the bilinear form $\Phi_{\lambda}$ with respect to the natural diagram basis is a diagonal matrix with entries being $\beta^{\lambda}$.
It is always semisimple unless $\beta = 0$, in which case there is a unique simple one dimensional module $M(0)$.
\end{cor}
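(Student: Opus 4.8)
The plan is to derive everything from the planar cellular machinery of Theorem~\ref{thm:planar_cellular} together with a single diagram computation of the Gram matrix. First I would note that $\mcP\mcR_n(\beta)$ is by definition a planar subalgebra of $\mcP_n(\beta)$ and that $\iota$ preserves its defining conditions, since reflecting a planar rook diagram again yields a planar diagram all of whose blocks have size at most two with every size-two block propagating. Thus Theorem~\ref{thm:planar_cellular} applies and $\mcP\mcR_n(\beta)$ is cellular with the natural diagram basis and $\Lambda$ graded by the number of propagating blocks. Because each block of a rook diagram meets each row in at most one point, every half diagram is a disjoint union of singletons and is therefore determined by the set of vertices it leaves non-propagating; recording that set identifies $M(\lambda)$ with the size-$\lambda$ subsets of $[n]$, where $\lambda$ is the number of non-propagating (singleton) vertices, so that $M(0)$ is the all-propagating half of the identity. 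This gives $\dim W(\lambda) = \binom{n}{\lambda}$ and, via \eqref{eq:dim_formula}, recovers $\dim \mcP\mcR_n(\beta) = \sum_{\lambda=0}^n \binom{n}{\lambda}^2 = \binom{2n}{n}$.

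The heart of the proof is the evaluation of $\Phi_\lambda$ on this basis. Following Section~\ref{sec:crt}, for $C_T, C_U \in M(\lambda)$ I would form the diagram obtained by reflecting $U$ and stacking it on $T$ and count interior components. Since all blocks are singletons, a vertex of the middle row lies on a through-strand exactly when it propagates in both $T$ and $U$, and it forms an isolated interior component exactly when it is a singleton in both. Consequently the number of propagating blocks of the composite equals the size of the intersection of the two propagating sets, which have common size $n-\lambda$; unless these sets coincide the intersection is strictly smaller, the number of propagating blocks drops, and by the principle of Section~\ref{sec:crt} the product is a lower-order term, forcing $\Phi_\lambda(C_T, C_U) = 0$. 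This makes the Gram matrix diagonal. When $T = U$ every propagating vertex survives, the induced permutation is the identity by planarity so the trivial $\field[\Sym_{n-\lambda}]$-form contributes $1$, and the $\lambda$ common singleton vertices each produce one isolated interior component; hence $\Phi_\lambda(C_T, C_T) = \beta^{\lambda}$, and the Gram matrix of $\Phi_\lambda$ equals $\beta^{\lambda}$ times the identity. This diagram bookkeeping is the one genuinely nontrivial step, and it is transparent precisely because the rook condition forbids blocks of size greater than two and non-propagating blocks of size two, so no loops or longer interior paths can ever form to recombine the propagating blocks or to alter the count of interior components.

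Finally I would read off the representation theory from the Graham--Lehrer dictionary. For $\beta \neq 0$ each form $\Phi_\lambda = \beta^\lambda I$ is nondegenerate, so $R(\lambda) = 0$ and $W(\lambda) = L(\lambda)$ for every $\lambda \in \Lambda$; then \eqref{eq:dim_formula} reads $\dim \mcP\mcR_n(\beta) = \sum_{\lambda} \bigl(\dim L(\lambda)\bigr)^2$, which by Artin--Wedderburn forces $\mcP\mcR_n(\beta) \iso \bigoplus_{\lambda=0}^n \mathrm{Mat}_{\binom{n}{\lambda}}(\field)$ and hence split semisimplicity. For $\beta = 0$ we have $\Phi_\lambda \equiv 0$ for all $\lambda > 0$ while $\Phi_0$ is the $1 \times 1$ identity, so the index set $\{\lambda \in \Lambda \mid \Phi_\lambda \neq 0\}$ is $\{0\}$ and the unique simple module is $L(0) = W(0) = M(0)$, which is one dimensional; since $\dim \mcP\mcR_n(0) = \binom{2n}{n} > 1$ for $n \geq 1$, the algebra cannot be semisimple. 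The main obstacle is thus the middle paragraph: confirming that a mismatch of propagating sets strictly lowers the propagating count and that the $T=U$ composite contributes exactly $\beta^{\lambda}$, which is the step I would write out in full detail.
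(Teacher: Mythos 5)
Your proposal is correct and takes essentially the same approach as the paper: cellularity via Theorem~\ref{thm:planar_cellular} with half diagrams identified with subsets of $[n]$ recording the singleton blocks, then the stacking computation showing the Gram matrix is $\beta^{\lambda}$ times the identity, and the Graham--Lehrer dictionary for the (non)semisimplicity claims. The paper's own proof is a terse two-line version of this, declaring the Gram matrix and semisimplicity statements ``straightforward from the combinatorial description''; you have simply written out those details (correctly) in full.
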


\begin{proof}
The first statement is Theorem~\ref{thm:planar_cellular} with the subsets of $[n]$ corresponding to the singleton blocks.
The remaining claims are straightforward from the combinatorial description.
\end{proof}

For $\beta = 0$, we can describe the action on $M(0)$ explicitly by $a v = c_a v$ for all $a \in \mcP\mcR_n(\beta)$ and $v \in M(0)$, where $c_a$ is the coefficient of $1$ in $a$.

We can also easily see from Equation~\eqref{eq:dim_formula} that
\[
\dim \mcP\mcR_n(\beta) = \binom{2n}{n} = \sum_{\lambda=0}^n \binom{n}{\lambda}^2.
\]

\subsection{Motzkin algebra}

The Motzkin algebra $\mcM(\beta)$ introduced in~\cite{BH14} can be described as the Temperley--Lieb algebra but with singletons allowed.
The dimension of $\mcM(\beta)$ is the $2n$-th Motzkin number $M_{2n}$.
For the cell datum, we have $\Lambda = [n]$ with $M(\lambda)$ being the set of noncrossing matchings with allowing singletons with exactly $\lambda$ defects, a set of non-nested singletons.
The dimensions of the cell modules are counted by the Motzkin triangle numbers, which are known to be computed by~\cite[Eq.~(3.22)]{BH14} and~\cite{Lando03} (see also~\cite[A026300]{OEIS}):
\begin{align*}
M_{n,k} & = \sum_{i=0}^{\lfloor (n-k)/2 \rfloor} \binom{n}{k-2i} \stirling{k+2i}{i}
\\ & = \sum_{i=0}^{\lfloor (n-k)/2 \rfloor} \binom{n}{2i+k} \left[ \binom{2i+k}{i} - \binom{2i+k}{i-1} \right]
\\ & = M_{n-1,k-1} + M_{n-1,k} + M_{n-1,k+1},
\end{align*}
with $M_{n,-1} = M_{n,n+1} = 0$ and $M_{0,0} = 1$.
The bijection from a half diagram in $M(\lambda)$ to a Motzkin path of length $n$ that ends at height $\lambda$ is by reading the diagram from left-to-right and treating each pair $\{a,b\}$ as an arc from $a$ to $b$, where each outgoing (resp.\ incoming) edge is $+1$ (resp.~$-1$), non-defect singletons are $0$, and defect singletons are $+1$.
This is the cell datum shown in Benkart and Halverson~\cite[Thm.~4.16]{BH14}.
We also have the analog of Equation~\eqref{eq:catalan_square} by Equation~\eqref{eq:dim_formula}:
\begin{equation}
\label{eq:motzkin_square}
M_{2n} = \sum_{\lambda=0}^n M_{n,\lambda}^2.
\end{equation}

\begin{thm}[{\cite[Thm.~5.14]{BH14}}]
The Motzkin algebra $\mcM_n(\beta)$ is semisimple if and only if $\beta - 1$ is not the root of the rescaled Chebyshev polynomials of the second kind\footnote{The rescaled Chebyshev polynomials have coefficients in $\ZZ$, so they can still be evaluated when $x$ is an element of a field of characteristic $2$.}
defined by $u_k(x) = U_k(x/2)$ for all $1 \leq k < n$.
\end{thm}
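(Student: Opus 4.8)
The plan is to apply the semisimplicity criterion for cellular algebras to the cell structure on $\mcM_n(\beta)$ established above and reduce everything to a Gram determinant computation. By the theory of Graham and Lehrer quoted earlier~\cite{GL96}, a cellular algebra over $\field$ is semisimple if and only if the bilinear form $\Phi_\lambda$ is nondegenerate for every $\lambda \in \Lambda$; equivalently, the Gram matrix $G_\lambda$ of $\Phi_\lambda$ in the natural diagram basis $M(\lambda)$ is invertible for every $\lambda$. Thus $\mcM_n(\beta)$ is semisimple if and only if $\det G_\lambda \neq 0$ in $\field$ for all $\lambda \in \Lambda$. First I would record that, because Motzkin diagrams are planar, every entry of $G_\lambda$ is either $0$ or a power $\beta^D$ (by the description of $\Phi_\lambda$ in Theorem~\ref{thm:planar_cellular} and the surrounding discussion), so $\det G_\lambda$ is a polynomial in $\beta$ with integer coefficients. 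In particular the whole computation can be carried out as an identity in $\ZZ[\beta]$ and specialized to $\field$ only at the end, which is exactly what makes the criterion characteristic free (cf.\ the footnote).

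Next I would establish a recursion for $\det G_\lambda$ by restricting cell modules along the inclusion $\mcM_{n-1}(\beta) \hookrightarrow \mcM_n(\beta)$ adjoining a through-strand at the node $n$. The three-term recursion $M_{n,k} = M_{n-1,k-1} + M_{n-1,k} + M_{n-1,k+1}$ recorded above reflects the three possibilities for the $n$-th node of a half diagram (a defect, a non-defect singleton, or the endpoint of an arc), and I would upgrade it to a filtration of the restricted cell module whose subquotients are $W_{n-1}(\lambda-1)$, $W_{n-1}(\lambda)$, and $W_{n-1}(\lambda+1)$. Tracking $\Phi_\lambda$ through this filtration, the Gram form on $W_n(\lambda)$ becomes block-structured, each block being a smaller Gram form scaled by an eigenvalue coming from the interaction of the new strand with the existing diagram. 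The crucial point, and where the shift $\beta \mapsto \beta - 1$ enters, is that the singleton option contributes an additive $+1$ relative to the pure Temperley--Lieb situation, so these eigenvalues are ratios of the rescaled Chebyshev polynomials $u_k$ evaluated at $\beta - 1$ rather than at $\beta$.

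Solving the recursion, I expect $\det G_\lambda$ to factor (up to sign and a power of $\beta$) as a product $\prod_k u_k(\beta-1)^{e_{\lambda,k}}$ with nonnegative integer exponents, where $k$ ranges over a subset of $\{1, \dots, n-1\}$ and every such $k$ occurs with positive exponent for at least one $\lambda$. Taking the product over all $\lambda \in \Lambda$ then gives that $\prod_\lambda \det G_\lambda$ is, up to units, $\prod_{k=1}^{n-1} u_k(\beta-1)^{m_k}$ with each $m_k \geq 1$. Specializing this polynomial identity to the given $\beta \in \field$, it is nonzero precisely when no factor $u_k(\beta-1)$ with $1 \leq k < n$ vanishes, which is exactly the asserted criterion; the characteristic-free statement follows since the factorization is already valid over $\ZZ[\beta]$.

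The main obstacle is the second step: proving the Gram determinant recursion and pinning down the Chebyshev factors with their correct multiplicities. This demands a careful diagonalization of the restricted bilinear form, in practice via Jones--Wenzl-type idempotents adapted to allow singletons, together with honest bookkeeping of how the $+1$ singleton contribution propagates through the recursion to produce $u_k(\beta-1)$ in place of $u_k(\beta)$. An alternative that bypasses some of this combinatorics would be to transport the known Temperley--Lieb Gram determinant formula through the $U_q(\gl_2)$ Schur--Weyl duality listed in Table~\ref{table:schur_weyl} (with $\beta - 1 = \pm(q+q^{-1})$), but making that comparison rigorous over an arbitrary field $\field$ is itself delicate, so I would treat the direct diagrammatic recursion as the primary route.
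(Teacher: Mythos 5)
The paper does not prove this statement at all: it is quoted directly from Benkart--Halverson~\cite{BH14}, so there is no internal proof to compare against. What you have written is, in outline, the route that \cite{BH14} itself takes --- reduce semisimplicity to nondegeneracy of every bilinear form $\Phi_\lambda$ via the Graham--Lehrer theory~\cite{GL96}, then compute the Gram determinants by a recursion driven by the branching $M_{n,k} = M_{n-1,k-1} + M_{n-1,k} + M_{n-1,k+1}$, with the shift $\beta \mapsto \beta - 1$ entering through the singleton contribution --- so your plan is aimed in the right direction and is consistent with the known answer.

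However, as a proof it has a genuine gap, and it sits exactly where you locate your ``main obstacle'': the factorization $\det G_\lambda \doteq \prod_k u_k(\beta-1)^{e_{\lambda,k}}$ is asserted (``I expect''), not derived, and everything nontrivial in the theorem lives in that step. Two concrete problems. First, the filtration of the restricted cell module with subquotients $W_{n-1}(\lambda-1)$, $W_{n-1}(\lambda)$, $W_{n-1}(\lambda+1)$ does not by itself make the Gram matrix block-structured: $\Phi_\lambda$ is $\mcM_n$-invariant but has no reason to respect an $\mcM_{n-1}$-module filtration, and forcing it to (your Jones--Wenzl-type diagonalization) requires invertibility of lower-order Gram forms, i.e.\ nonvanishing of the very Chebyshev evaluations you are trying to control; this circularity must be broken by a careful induction that also treats the degenerate parameter values, which is the bulk of the work in~\cite[Sec.~5]{BH14}. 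Second, the ``only if'' direction needs each $u_k(\beta-1)$ with $1 \leq k < n$ to occur with multiplicity $m_k \geq 1$ in $\prod_\lambda \det G_\lambda$; you assert this but give no argument, and without it the vanishing of some factor would not obstruct semisimplicity. (Your fallback via the $U_q(\gl_2)$ Schur--Weyl duality has the additional defect that it is only available generically over characteristic $0$, so it cannot deliver the characteristic-free statement the footnote insists on.) In short: correct strategy, essentially the one in the cited source, but the central computation that constitutes the proof is not carried out.
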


The first definition in~\cite{BH14} has the Motzkin algebra with two parameters $\mcM(\beta, \gamma)$ as it is a subalgebra of the rook Brauer algebra.
There is a similar misprint in~\cite{BH14} as in~\cite{HdM14} as described in Example~\ref{ex:nonassoc_product}.
We have the analogs of Proposition~\ref{prop:two_param_rook_brauer_iso} and Problem~\ref{prob:two_param_rook_brauer_semisimple}.

\begin{prop}
For all $\gamma \neq 0$, we have $\mcR\mcB_n(0, \gamma) \iso \mcR\mcB_n(0, 1)$.
\end{prop}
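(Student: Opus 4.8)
The plan is to reuse verbatim the explicit diagram rescaling that proves $\mcR\mcB_n(\beta, \beta) \iso \mcR\mcB_n(\beta, \gamma)$ for $\beta, \gamma \neq 0$ (see~\cite{HdM14,MM14} and~\cite[Lemma~7.3]{DG22II}), observing that this rescaling only ever redistributes powers of the path-parameter $\gamma$ and is completely insensitive to the value of the loop-parameter $\beta$; this is precisely why the argument is unchanged at $\beta = 0$.

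First I would introduce the statistic $f(d) := \frac{1}{2} s(d)$ on rook Brauer diagrams, where $s(d)$ denotes the number of singleton blocks of $d$. Since every block of $d$ has size $1$ or $2$ and the size-$2$ blocks cover an even number of the $2n$ vertices, $s(d)$ is even and $f(d) \in \ZZ_{\geq 0}$. I would then define the linear map $\phi \colon \mcR\mcB_n(0, \gamma) \to \mcR\mcB_n(0, 1)$ on the natural diagram basis by $\phi(d) = \gamma^{f(d)} d$. Because $\gamma \neq 0$, the assignment $d \mapsto \gamma^{-f(d)} d$ is a two-sided inverse, so $\phi$ is a linear bijection and it remains only to check that $\phi$ is multiplicative.

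Writing the product in $\mcR\mcB_n(\beta, \gamma)$ as $d_1 \cdot d_2 = \beta^{\ell} \gamma^{c} d_3$, where $\ell$ counts the interior loops and $c$ the contractible interior paths created by the concatenation, the equality $\phi(d_1 \cdot d_2) = \phi(d_1)\, \phi(d_2)$ holds exactly when
\[
f(d_1) + f(d_2) = f(d_3) + c,
\]
since the resulting diagram $d_3$ and the loop exponent $\ell$ are identical in both algebras and only the power of the second parameter changes when $\gamma$ is set to $1$. At $\beta = 0$ a nonzero product forces $\ell = 0$, so this single identity governs every nonvanishing structure constant, the vanishing products matching trivially on both sides, exactly as in the $\beta \neq 0$ case.

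The main obstacle is therefore the purely combinatorial identity $s(d_1) + s(d_2) = s(d_3) + 2c$: each contractible interior path has exactly two endpoints, each an interface singleton consumed in the concatenation, while every other singleton of $d_1$ or $d_2$ is either preserved or created afresh in $d_3$ in a balanced fashion (in particular when a propagating block of one diagram degenerates to a singleton against a singleton of the other). Verifying this bookkeeping is a finite diagrammatic check that makes no reference whatsoever to $\beta$, so the identical computation applies at $\beta = 0$; and since the statistic $s(d)$ and the map $\phi$ respect planarity, the same rescaling restricts to the Motzkin subalgebra and yields $\mcM_n(0, \gamma) \iso \mcM_n(0, 1)$ as well.
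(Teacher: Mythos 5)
Your proposal is correct and is essentially the paper's own argument: the paper's proof of this proposition is literally ``Same as the case $\beta \neq 0$,'' i.e., the diagram rescaling of~\cite{HdM14,MM14}, which is exactly the map $d \mapsto \gamma^{s(d)/2} d$ you construct. Your key observation—that the singleton-counting identity $s(d_1) + s(d_2) = s(d_3) + 2c$ involves only the path parameter and never the loop parameter, so the rescaling is unaffected by setting $\beta = 0$—is precisely why the paper can invoke the $\beta \neq 0$ case verbatim.
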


\begin{problem}
\label{prob:two_param_motzkin}
Determine when the two-parameter Motzkin algebra $\mcM(\beta, 0)$ is semisimple.
Are the points where it is not semisimple described as roots to some specialization of a higher level generalization of Chebyshev polynomials in the Askey scheme, more specifically as a specialization of Jacobi polynomials?
\end{problem}

We remark on another curious appearance of Chevyshev polynomials of the second kind with the branching rule of the Brauer algebra $\mcB_n(\beta)$ to $\mcT\mcL_n(\beta)$ that was given in~\cite{BM05}.

\begin{problem}
Determine if there is a relationship between the branching rule from the Motzkin algebra to the Brauer algebra and the semisimplicity of the Motzkin algebra.
\end{problem}

One way to define a half integer Motzkin algebra would be to mandate that, \textit{e.g.}, $1$ is always a singleton.
Indeed, this is a subalgebra of dimension $M_{2n-1}$, but it is not immediately clear it is cellular since the basis is not invariant under $\iota$.

\subsection{Partial Temperley--Lieb algebra}

The partial Temperley--Lieb algebra $\mcP\mcT\mcL_n(\beta)$ was recently introduced in~\cite{DG22} as a subalgebra of the Motzkin algebra $\mcM_n(\beta)$.
Specifically, its basis is indexed by \defn{balanced} diagrams: For a diagram $\rho$, the number of pairs of $\rho \cap [n]$ equals those of $\rho \cap [n]'$.
Each balanced diagram corresponds to a basis element given as an alternating sum of diagrams by removing edges (but leaving the vertices).
The proof of cellularity by Doty and Giaquinto is using the following stronger result.

\begin{thm}[{\cite[Thm.~4.2]{DG22}}]
\label{thm:PTL_Morita}
We have the Morita equivalence
\[
\mcP\mcT\mcL_n(\beta) \simeq \bigoplus_{k=1}^n \mcT\mcL_k(\beta - 1).
\]
\end{thm}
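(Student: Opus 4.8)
The plan is to realize $\mcP\mcT\mcL_n(\beta)$ as an iterated inflation of the Temperley--Lieb algebras $\mcT\mcL_k(\beta-1)$ and to check that the inflation data is nondegenerate, so that the standard Morita equivalence for such stratified algebras collapses the inflation layers and leaves precisely the direct sum of the input algebras. This parallels the Brauer-level statement of Theorem~\ref{thm:rook_Brauer}, the difference being that the \emph{balanced} condition here produces one Temperley--Lieb factor for each $k$ rather than collapsing the layers into a single Brauer factor.

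First I would stratify the balanced diagram basis by the number $k$ of non-singleton vertices on each side; note that the balanced condition forces the number of top singletons to equal the number of bottom singletons, so this common value is $n-k$. For a balanced diagram, deleting the singleton vertices leaves a planar perfect matching on the remaining $k+k$ vertices, that is, a basis diagram of $\mcT\mcL_k$, and conversely a diagram of $\mcT\mcL_k$ together with a choice of singleton positions $S,S'\subseteq[n]$ with $|S|=|S'|=n-k$ reconstructs the balanced diagram (planarity is inherited in both directions, since a degree-zero vertex never obstructs an arc). At the level of vector spaces this identifies the $k$-th layer with $Z_k\otimes\mcT\mcL_k(\beta-1)\otimes Z_k$, where $Z_k$ records the singleton placements; this is the inflation datum.

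Next I would pass to the alternating-sum (balanced) basis of Doty--Giaquinto and compute the induced multiplication on each layer, where two points must be verified. For the \emph{parameter shift}, when two balanced basis elements are composed a singleton stacked against a singleton produces an interior component weighted by $\beta$ in the diagram basis, but the alternating sum over edge deletions subtracts the term in which that edge is absent, so the net weight is $\beta-1$; thus the Temperley--Lieb part of each layer is governed by $\mcT\mcL_k(\beta-1)$ rather than $\mcT\mcL_k(\beta)$, exactly the mechanism by which the loop parameter shifts by one for the quasi-partition and rook Brauer algebras. For the \emph{separation}, in this basis the multiplication is triangular for the filtration by $k$ and the strictly-lower contributions cancel, so that $k$ becomes a grading by orthogonal central idempotents; concretely one produces idempotents $\epsilon_k$ (a fixed $k$-through-strand diagram with the remaining vertices singletons, normalized within the balanced basis so that degenerate values of $\beta$ cause no trouble) with $\epsilon_k\,\mcP\mcT\mcL_n(\beta)\,\epsilon_k\cong\mcT\mcL_k(\beta-1)$ and $\epsilon_k\,\mcP\mcT\mcL_n(\beta)\,\epsilon_{k'}=0$ for $k\neq k'$, and, setting $e=\sum_k\epsilon_k$, the fullness $\mcP\mcT\mcL_n(\beta)\,e\,\mcP\mcT\mcL_n(\beta)=\mcP\mcT\mcL_n(\beta)$.

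Granting these, the idempotent-truncation criterion for Morita equivalence gives $\mcP\mcT\mcL_n(\beta)\simeq e\,\mcP\mcT\mcL_n(\beta)\,e\cong\bigoplus_{k=1}^n\mcT\mcL_k(\beta-1)$, the lower bound $k\geq1$ coming from the fact that the all-singletons configuration is not an independent element of the alternating-sum basis. The main obstacle is the separation step: diagram multiplication only yields a filtration in which the number of non-singleton vertices can drop, so on its own it produces an iterated inflation rather than a direct sum, and it is precisely the balanced alternating-sum basis that forces the off-diagonal and strictly-lower structure constants to vanish (equivalently, makes the inflation bilinear forms nondegenerate). Carrying out this cancellation, together with the loop bookkeeping behind the shift $\beta\mapsto\beta-1$, is where essentially all of the work lies. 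The cellular structure from Theorem~\ref{thm:planar_cellular} and the triangular change of basis of Proposition~\ref{prop:cellular_triangle_basis} organize the argument, but the nondegeneracy is the crux, and it has no analogue for the non-partial algebras $\mcT\mcL_n(\beta)$ or $\mcB_n(\beta)$, which is why those do not split.
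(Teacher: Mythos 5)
First, a point of comparison: the paper does not prove this statement at all. Theorem~\ref{thm:PTL_Morita} is imported verbatim from Doty--Giaquinto (their Theorem~4.2), and the surrounding text only records its consequences, together with an alternative proof of the weaker cellularity claim via Proposition~\ref{prop:subcellular} and Proposition~\ref{prop:cellular_triangle_basis}. So your proposal has to stand on its own; its architecture (stratify balanced diagrams by the number $k$ of non-singleton vertices per side, realize the $k$-th layer as an inflation of $\mcT\mcL_k(\beta-1)$, construct idempotents, and apply the full-idempotent Morita criterion) is the natural route and is in the spirit of the cited proof.

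However, two of your specific assertions are false, and one is fatal to the argument as you set it up. First, the all-singletons diagram \emph{is} a balanced diagram (zero pairs on each side) and \emph{is} an independent element of the alternating-sum basis: having no edges, its alternating sum is itself. So your justification for the lower bound $k \geq 1$ is wrong, and your construction, carried out honestly, produces a $k=0$ summand $\mcT\mcL_0(\beta-1) \iso \field$. This is not cosmetic: already for $n=1$ one has $\mcP\mcT\mcL_1(\beta) = \Span_{\field}\{1, e\}$ with $e^2 = \beta e$, where $e$ is the all-singletons diagram; for $\beta \neq 0$ this is $\field \oplus \field$ via the orthogonal idempotents $e/\beta$ and $1 - e/\beta$, which has two simple modules and hence is not Morita equivalent to $\mcT\mcL_1(\beta-1) \iso \field$. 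In other words, what your method proves is an equivalence with $\bigoplus_{k=0}^{n} \mcT\mcL_k(\beta-1)$, and no bookkeeping can remove the $k=0$ block (this also indicates the displayed statement is missing that summand). Second, the parenthetical claim that the idempotents can be normalized so that ``degenerate values of $\beta$ cause no trouble'' is also false: any such normalization divides by powers of $\beta$ (already $\epsilon_0 = e/\beta$, $\epsilon_1 = 1 - e/\beta$ above), and at $\beta = 0$ the conclusion genuinely fails, since $\mcP\mcT\mcL_1(0) \iso \field[e]/(e^2)$ is local and non-semisimple, hence Morita equivalent to neither $\field$ nor $\field \oplus \field$; thus $\beta \neq 0$ is an unavoidable hypothesis, not a removable nuisance. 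Two lesser points: the $\epsilon_k$ you describe cannot be central --- centrality would upgrade the conclusion to an algebra isomorphism $\mcP\mcT\mcL_n(\beta) \iso \bigoplus_k \mcT\mcL_k(\beta-1)$, impossible by dimension count for $n \geq 2$ --- though your truncation argument never actually uses centrality; and the separation/nondegeneracy step, which you correctly identify as carrying essentially all of the content, is left unproven, so even apart from the errors above this is a plan rather than a proof.
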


As a consequence of Theorem~\ref{thm:PTL_Morita}, we have a complete classification of the semisimplicity of the partial Temperley--Lieb algebra from the Temperley--Lieb one.
Furthermore, their description of the modules of $\mcP\mcT\mcL_n(\beta)$ is essentially the same as given here by using half diagrams with the cellular algebra structure.
We remark that Proposition~\ref{prop:subcellular} and Proposition~\ref{prop:cellular_triangle_basis} give an alternative simple proof that $\mcP\mcL_n(\beta)$ is a cellular algebra (using that $\mcM_n(\beta)$ is a cellular algebra).

\subsection{Planar even algebra}
\label{sec:planar_even}

If we take the planar version of the parity matching algebra, we obtain a cellular algebra with interesting combinatorial properties.
We call the planar subalgebra of $\mcP\mcM_n(\beta)$ the \defn{planar even algebra} and denote it by $\mcP\mcE_n(\beta)$.

The dimension of this subalgebra is known~\cite[A001764]{OEIS}:
\begin{equation}
\label{eq:even_planar_set_partitions}
\dim \mcP\mcE_n(\beta) = \frac{1}{2n+1}\binom{3n}{n},
\end{equation}
as well as many other combinatorial interpretations.
It would be interesting to see what the product structure in $\mcP\mcE_n(\beta)$ is on other such interpretations, and if these have other natural algebraic structures, what is there intrepretation in terms of $\mcP\mcE_n(\beta)$. 
Next, we explicitly describe a parameterization of the cell modules for $\beta \neq 0$.

\begin{thm}
\label{thm:planar_even_cell_modules}
The set $\Lambda$ for $\mcP\mcE_n(\beta)$ is given by all words $\lambda = \lambda_1 \dotsm \lambda_{\ell}$ in the alphabet $\{1, 2\}$ such that the sum $S = \sum_{i=1}^{\ell} \lambda_i \leq n$ and $n \equiv S \pmod{2}$.
Furthermore, the bilinear form $\Phi_{\lambda} \neq 0$ for all $\lambda \in \Lambda$.
\end{thm}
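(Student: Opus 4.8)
The plan is to first pin down the set $\Lambda$ using the cellular structure furnished by Theorem~\ref{thm:planar_cellular} together with the half-diagram description of $\mcP\mcM_n(\beta)$, and then to deduce the nonvanishing of $\Phi_\lambda$ from the hypothesis $\beta \neq 0$ by a diagonal argument. Throughout I would use that, by Theorem~\ref{thm:planar_cellular}, the cell basis is the natural diagram basis, the grading is by number of defects, and we always take the trivial $\field[\Sym_k]$-module.

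For the identification of $\Lambda$, I would note that a basis diagram of $\mcP\mcE_n(\beta)$ is a planar even set partition, so each block has even total size and, since there are no crossings, the propagating blocks are linearly ordered from left to right. To each diagram I attach the word $\lambda = \lambda_1 \cdots \lambda_\ell$ in $\{1,2\}$ recording, in this order, whether the $i$-th propagating block has odd top part ($\lambda_i = 1$) or even top part ($\lambda_i = 2$); this is precisely the planar refinement of the pair-of-partitions data $(\mu,\nu)$ of the conditions~\eqref{eq:PM_conditions}, in which the trivial module replaces the Specht modules and the ordered sequence of block types replaces the multiplicities $(k_1,k_2)$. Since $\lambda_i = 1$ records an odd (hence size $\geq 1$) top part and $\lambda_i = 2$ an even (hence size $\geq 2$) top part, the quantity $S = \sum_i \lambda_i$ is the minimal number of top vertices consumed by the propagating blocks, giving $S \leq n$. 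Moreover, each non-propagating block and each even propagating block has even top part while each odd propagating block flips parity, so $n \equiv \#\{i : \lambda_i = 1\} \equiv S \pmod 2$. Hence every word occurring satisfies the two stated conditions.

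Conversely, to see that each admissible word labels a nonempty cell, I would exhibit one half-diagram: reading left to right, realize each $\lambda_i = 1$ as a propagating singleton and each $\lambda_i = 2$ as a propagating adjacent pair, then fill the remaining $n - S$ vertices (an even number, by the parity condition) with non-propagating adjacent pairs. This is a planar even half-diagram with defect word $\lambda$, so $M(\lambda) \neq \emptyset$ and $\lambda \in \Lambda$. Distinct words give distinct cells, since in the absence of crossings the left-to-right sequence of propagating-block parities is a genuine invariant (a $1$ cannot be permuted past a $2$), and this pins down $\Lambda$ exactly.

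For the nonvanishing, I would invoke the pictorial rule of Section~\ref{sec:crt} together with the planarity observation of Theorem~\ref{thm:planar_cellular}: in a planar algebra the induced permutation is always trivial, so for half-diagrams $T, U$ one has $\Phi_\lambda(C_T, C_U) = \beta^D$ whenever reflecting $U$ and gluing it to $T$ induces a bijection of defects, and $0$ otherwise. Taking $U = T$, the defects match via the identity, and the glued picture has some number $D \geq 0$ of interior loops, so $\Phi_\lambda(C_T, C_T) = \beta^{D}$, which is nonzero because $\beta \neq 0$; hence $\Phi_\lambda \neq 0$. The only genuinely delicate step is the bookkeeping of the first part—correctly matching the planar block-type word to the $(\mu,\nu)$ data and verifying both the inequality $S \leq n$ and the congruence $n \equiv S \pmod 2$—whereas the form computation is immediate once planarity collapses everything to a power of $\beta$.
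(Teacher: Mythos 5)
Your identification of $\Lambda$ follows essentially the paper's own route: you construct the same minimal representative for each admissible word (the paper's $\rho_{\lambda}$, singletons and adjacent pairs followed by caps), and your observation that ``a $1$ cannot be permuted past a $2$'' is exactly the paper's claim that an even propagating block cannot cross an odd one. However, you only prove half of what is needed to ``pin down $\Lambda$ exactly.'' Your invariance argument shows the cells are not \emph{coarser} than words (multiplication cannot merge two different words); it does not show they are not \emph{finer}, i.e.\ that any two half diagrams with the \emph{same} defect word lie in the same cell. The paper's proof devotes its first and longest claim to precisely this: every half diagram can be multiplied by an element of $\mcP\mcE_n(\beta)$, without losing defects, down to the standard form $\rho_{\lambda}$, so every cell with word $\lambda$ contains $\rho_{\lambda}$ and there is exactly one such cell. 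This can alternatively be extracted from the square structure of the diagram basis (each basis diagram has equal top and bottom words, so the basis is $\bigsqcup_{w} M(w) \times M(w)$), but your write-up neither proves the reduction nor makes that structural argument, so the final sentence of your first part is an assertion rather than a proof.

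The genuine gap is in the second part. The theorem carries no hypothesis $\beta \neq 0$; you introduced one. Your diagonal computation gives $\Phi_{\lambda}(C_T, C_T) = \beta^{D}$, where $D$ is the number of non-defect blocks of $T$, and this is $0$ at $\beta = 0$ whenever $T$ contains a cap -- so your argument proves nothing in exactly the case where the claim has real content (compare the planar rook algebra, where $\Phi_{\lambda}$ \emph{does} vanish at $\beta = 0$ for every $\lambda > 0$). The paper's proof is engineered to survive $\beta = 0$: it takes the particular element $\widetilde{\rho}_{\lambda}$ obtained from $\rho_{\lambda}$ by merging all caps into the last defect block, so that the glued picture has \emph{no} interior components and $\Phi_{\lambda}(\widetilde{\rho}_{\lambda}, \widetilde{\rho}_{\lambda}) = 1$ identically in $\beta$; it also notes separately that cellularity persists at $\beta = 0$. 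Your proof is repaired by the same device -- the parity condition $n \equiv S \pmod{2}$ guarantees a half diagram with word $\lambda$ and no non-defect blocks exists, giving $D = 0$ -- but as written your argument establishes the theorem only for $\beta \neq 0$.
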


\begin{proof}
We first assume $\beta \neq 0$.
The basis elements of all cell modules are given by all planar set partitions of $[n]'$, where the choice of defect blocks must not be nested.
Our first claim that for any such basis diagram $\nu \in W$, we can multiply it multiplied by a diagram $\rho \in \mcP\mcE_n(\beta)$ such that we get some sequence $\lambda$ of defect blocks of sizes $1$ and $2$ in some order followed by a sequence of caps $\{i', (i+1)'\}$.
We denote such an element by $\rho_{\lambda}$.

If there are no defects (necessarily, we must have $n$ being even), we can simply multiply by the cap and cup element:
\begin{equation}
\label{eq:cap_cup_element}
\begin{gathered}
\{\{1,2\},\dotsc,\{2n-1,2n\}, \{1',2'\},\dotsc,\{(2n-1)',(2n)'\} = b_1 b_3 \dotsm b_{2n-1}
\\ =
\begin{tikzpicture}[scale = 0.5,thick, baseline={(0,-1ex/2)}] 
\tikzstyle{vertex} = [shape=circle, minimum size=2pt, inner sep=1pt]
\foreach \i in {1,3,4,5,7} {
    \node[vertex] (G\i) at (2*\i, 1) [shape=circle, draw] {};
    \node[vertex] (Gp\i) at (2*\i+1, 1) [shape=circle, draw] {};
    \node[vertex] (G-\i) at (2*\i, -1) [shape=circle, draw] {};
    \node[vertex] (Gp-\i) at (2*\i+1, -1) [shape=circle, draw] {};
    \draw[-] (G\i) .. controls +(0, -.5) and +(0, -.5) .. (Gp\i);
    \draw[-] (G-\i) .. controls +(0, .5) and +(0, .5) .. (Gp-\i);
}
\draw (4.5,1) node {$\cdots$};
\draw (4.5,-1) node {$\cdots$};
\draw (12.5,1) node {$\cdots$};
\draw (12.5,-1) node {$\cdots$};
\end{tikzpicture}
\end{gathered}
\end{equation}
We can do a similar multiplication but reordering all elements in the defects of $\nu$ so that they become $[m']$ for some $m$.
In particular, if the defects are on the elements $i_1 < \cdots i_m$, then we can multiply by the diagram in $\mcP\mcE_n(\beta)$ consisting of $\{i_1,1'\}, \dotsc, \{i_m,m'\}$ and all remaining elements are cups or caps.
Thus, we can assume every element in $[n]'$ belongs to a defect of $\nu$.

It is sufficient to prove it for when $\nu = \{[n]'\}$ with a single defect block.
If $n$ is odd, the we use the same diagram in Equation~\eqref{eq:cap_cup_element} except we make the last block a propagating block $\{n, n'\}$.
If $n$ is odd, we make the rightmost block a propagating block instead of a cap-cup pair.
Thus, we have shown the first claim.

Next, we claim that an even propagating block cannot cross an odd propagating block.
This follows from a straightforward parity argument.
Hence, the elements $\rho_{\lambda}$ uniquely determine the cell modules up to isomorphism, which is clearly in bijection with the claimed $\Lambda$.
This also naturally extends to a description of the cellular basis by connecting a pair of such cell module basis elements (essentially undoing the decomposition~\eqref{eq:partition_decomposition}).

For $\beta = 0$, we have some additional terms equal to $0$, which does not change the proof above that the resulting basis is cellular.

To see that $\Phi_{\lambda} \neq 0$ for some fixed $\lambda \in \Lambda$, consider the element $\widetilde{\rho}_{\lambda}$, which connects all of the caps in $\rho_{\lambda}$ to the last defect block.
The pairing $\Phi_{\lambda}(\widetilde{\rho}_{\lambda}, \widetilde{\rho}_{\lambda}) = 1$.
\end{proof}

Define $E_P(m) := \dim \mcP\mcE_{m/2}$ given in Equation~\eqref{eq:even_planar_set_partitions} if $m \in 2\ZZ$ and $0$ otherwise.
Thus, $E_P(m)$ counts the number of planar even set partitions of $[m]$.
Note $E_P(0) = 1$ by Equation~\eqref{eq:even_planar_set_partitions}.

\begin{prop}
\label{prop:even_planar_cell_dim}
Fix $\lambda \in \Lambda$ such that $\lambda$ is a permutation of $1^{k_1} 2^{k_2}$.
Then we have
\begin{equation}
\label{eq:planar_even_cell_dim}
\dim W(\lambda) = \sum_{\{i_1 < \cdots < i_{\ell-1}\} \in \binom{[n-1]}{\ell-1}} \prod_{j=1}^{k_1} (2 - \delta_{i_j,i_{j-1}+1}) E_P(i_j - i_{j-1} - 1) \prod_{j=k_1+1}^{\ell} E_P(i_j - i_{j-1}),
\end{equation}
where $i_0 = 0$.
\end{prop}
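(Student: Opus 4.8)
The plan is to prove the formula by realizing each basis element of $W(\lambda)$ as a planar even half diagram and then cutting it into $\ell$ contiguous pieces, one for each defect block read from left to right. By Theorem~\ref{thm:planar_even_cell_modules}, together with the planar cellular structure of Theorem~\ref{thm:planar_cellular}, a basis element of $W(\lambda)$ is a noncrossing set partition of $[n]$ whose non-defect blocks have even size and whose $\ell$ non-nested defect blocks have odd or even cardinality according as the corresponding letter of $\lambda$ is $1$ or $2$. First I would record, for each such half diagram, a sequence $i_1 < \cdots < i_{\ell-1}$ of cut points obtained by reading left to right and closing off a region as soon as the first $j$ defect blocks (and everything they enclose) have been exhausted; setting $i_0 = 0$ and $i_\ell = n$ then partitions $[n]$ into intervals $I_j = (i_{j-1}, i_j]$ with the $j$-th defect living in $I_j$. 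The vanishing of $E_P$ on odd arguments will automatically enforce the parity of each $|I_j|$.

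The core of the argument is to show that this cutting is a bijection onto the indexing data of the right-hand side and to count the fillings of a single interval. Within an interval carrying an even (type-$2$) defect, I would argue that the defect together with the even non-defect blocks of that interval forms an even planar set partition of the $|I_j|$ points with one distinguished non-nested block, and establish the clean identity that the number of such marked configurations equals $E_P(i_j - i_{j-1})$, the stated factor. For an odd (type-$1$) defect the analogous local count should produce $(2 - \delta_{i_j, i_{j-1}+1}) E_P(i_j - i_{j-1} - 1)$, where $E_P(i_j - i_{j-1}-1)$ counts the even planar partition made of the non-defect blocks (whose total size is forced even) and the binary factor records the two admissible ways to seat the defect against that partition, degenerating to a single choice exactly when $I_j$ is a single point. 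Multiplying the local counts across the intervals and summing over all cut vectors $\{i_1 < \cdots < i_{\ell-1}\} \in \binom{[n-1]}{\ell-1}$ would then assemble the displayed product–sum.

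Two points need care. First, because the formula groups the $k_1$ type-$1$ factors before the $k_2$ type-$2$ factors, I must justify that $\dim W(\lambda)$ depends only on the content $1^{k_1}2^{k_2}$ and not on the order of the letters; I would do this by a local bijection that swaps an adjacent odd/even pair of defects together with their intervals, which combined with the left–right reflection of the line gives invariance under permuting the letters and lets me assume $\lambda = 1^{k_1}2^{k_2}$. Second, and this is the main obstacle, the cutting map must be shown to be \emph{well defined and injective}: the genuine difficulty is that an even non-defect block lying between two consecutive defects can a priori be charged to either neighbour's interval, and that a defect may have non-minimal odd or even size, so the closing-off rule has to be pinned down so that each half diagram arises from exactly one cut vector and one tuple of local fillings. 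I expect the bulk of the work to lie in verifying that this rule never double counts such between-defect blocks while still accounting for the larger defects, and in proving the marked-block identity for the even intervals; once these are secured, the multiplicative structure of the local counts across the intervals yields the formula.
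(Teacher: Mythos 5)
Your plan follows the same route as the paper's own proof: cut each half diagram into one interval per defect and multiply local counts (you close an interval as soon as a defect's span is exhausted, while the paper starts each interval at the minimum of its defect block --- mirror conventions of the same idea). The genuine problem is that the two verifications you defer to the end are not merely ``the bulk of the work'': they are false, under your cutting rule or the paper's, because the right-hand side of \eqref{eq:planar_even_cell_dim} undercounts. Concretely: (i) an odd defect can be ``seated'' in more than the two ways you allow --- in an interval $\{1,\dotsc,5\}$ whose defect is required to contain the point $1$, the filling with defect $\{1,4,5\}$ and non-defect $\{2,3\}$ nested under the arc from $1$ to $4$ is planar with the correct parities, giving $7$ admissible fillings rather than $2E_P(4)=6$ (the defect may absorb \emph{any} exposed part of the even partition, not just the adjacent one); and (ii) the defect of an interval need not contain its first (or, in your convention, last) point at all --- for $n=4$ and $\lambda=(2)$ the basis diagrams have defect $\{1,2\}$, $\{3,4\}$, $\{1,4\}$ or $\{1,2,3,4\}$, so there are $4$ of them, while any marked-block rule tied to a boundary point of the interval counts only $E_P(4)=3$.

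This cannot be repaired within your scheme, because the proposition itself contradicts the paper's own data: Table~\ref{table:even_dim} and the displayed lists of cell-module basis diagrams for $\mcP\mcE_4(\beta)$ give $\dim W(2)=4$ and $\dim W(11)=5$, whereas \eqref{eq:planar_even_cell_dim} yields $3$ and $4$; only the former values satisfy the forced identity \eqref{eq:dim_formula}, since
\[
3^2+4^2+5^2+1^2+1^2+1^2+1^2+1^2 \;=\; 55 \;=\; \frac{1}{9}\binom{12}{4} \;=\; \dim \mcP\mcE_4(\beta),
\]
while the formula's values sum to $39$. In other words, you have faithfully reproduced the paper's argument \emph{including its error}: the paper's steps ``the smallest entry in each set belongs to the propagating block'' and ``join $1'$ to the lexicographically smallest part'' fail on exactly the configurations in (i) and (ii) above. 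A correct count must allow non-defect blocks on both sides of the defects and must let a type-$1$ defect swallow any exposed part of the ambient even partition; the resulting dimensions are genuinely larger than the stated formula.
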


\begin{proof}
Since we cannot cross propagating blocks, we divide $[n]'$ up into $\ell = k_1 + k_2$ sets
\[
\{1', \dotsc, i_1'\} \sqcup \{(i_1+1)', \dotsc, i_2'\} \sqcup \cdots \sqcup \{(i_{\ell-1}+1)', \dotsc, n'\},
\]
where $i_1 < \cdots < i_{\ell-1}$, such that the smallest entry in each block part of the propagating block.
Without loss of generality, we can look at the first set $\{1', \dotsc, i_1'\}$.
If $\lambda_1 = 2$, then if we consider all even set partitions on $[i_1']$, we simply consider the lexicographic smallest part as the propagating block.
If $\lambda_1 = 1$, then we consider all even set partitions on $\{2', \dotsc, i_1'\}$, but we have two cases.
The first is $\{1'\}$ is an isolated propagating block, and the second is we join $1'$ to the lexicographic smallest part.
Note that the second case is impossible when $i_{j-1} + 1 = i_j$.
The claim follows as we can chose these even set partitions independently in each set.
\end{proof}

Consequently, we see that the dimension only depends on the number of $1$'s and $2$'s in the word $\lambda$, and there are $\binom{k_1 + k_2}{k_1}$ such sequences.
This gives the following identity of binomial coefficients
\[
E_P(n) = \sum_{k_1 + k_2 \leq n} \binom{k_1+k_2}{k_1} \bigl( \dim W(1^{k_1}2^{k_2}) \bigr)^2
\]
after substituting in Equation~\eqref{eq:planar_even_cell_dim}.
We give some examples of the dimensions in Table~\ref{table:even_dim}.
We note that $W(\emptyset) = E_P(n)$ since it consists of all noncrossing even set partitions on $[n]'$.

\begin{table}
\begin{center}
\begin{tabular}[t]{cccc}
\toprule
$n=1$ & $n=2$ & $n=3$ & $n=4$
\\ \midrule
\begin{tabular}[t]{r|cc}
 & 0 & 1
\\ \hline 0 & 0 & 0
\\  1 & 0 & 1
\end{tabular}
&
\begin{tabular}[t]{r|ccc}
 & 0 & 1 & 2
\\ \hline 0 & 1 & 0 & 0
\\ 1 & 1 & 0 & 0
\\ 2 & 0 & 0 & 1
\end{tabular}
&
\begin{tabular}[t]{r|cccc}
 & 0 & 1 & 2 & 3
\\ \hline 0 & 0 & 0 & 0 & 0
\\ 1 & 0 & 3 & 0 & 0
\\ 2 & 0 & 1 & 0 & 0
\\ 3 & 0 & 0 & 0 & 1
\end{tabular}
&
\begin{tabular}[t]{r|ccccc}
 & 0 & 1 & 2 & 3 & 4
\\ \hline 0 & 3 & 0 & 0 & 0 & 0
\\ 1 & 4 & 0 & 0 & 0 & 0
\\ 2 & 1 & 0 & 5 & 0 & 0
\\ 3 & 0 & 0 & 1 & 0 & 0
\\ 4 & 0 & 0 & 0 & 0 & 1
\end{tabular}
\\ \bottomrule
\end{tabular}
\end{center}
\caption{The dimension of $W(1^{k_1}2^{k_2})$ in $\mcP\mcE_n(\beta)$ for $n \leq 4$. The value $k_1$ (resp.\ $k_1 + k_2$ or the number of propagating blocks) is given by the column (resp.\ row) in each table.}
\label{table:even_dim}
\end{table}

\begin{problem}
\label{prob:irrep_dim_CE}
Find a recurrence relation, generating function, and more compact (closed) formula for $\dim(1^{k_1} 2^{k_2})$.
\end{problem}

\begin{ex}
The basis diagrams for the cell modules of $\mcP\mcE_4(\beta)$ are
\begin{gather*}
W(\emptyset):
  \begin{tikzpicture}[scale = 0.5,thick, baseline=30pt] 
  \tikzstyle{vertex} = [shape=circle, minimum size=2pt, inner sep=1pt]
  \foreach \i in {1,...,4} {
      \node[vertex] (G\i) at (\i, 2) [shape=circle, draw] {};
  }
  \draw[-,blue] (G1) .. controls +(0, .5) and +(0, .5) .. (G2) .. controls +(0, .5) and +(0, .5) .. (G3) .. controls +(0, .5) and +(0, .5) .. (G4);
  \end{tikzpicture}\,,
  \quad
  \begin{tikzpicture}[scale = 0.5,thick, baseline=30pt] 
  \tikzstyle{vertex} = [shape=circle, minimum size=2pt, inner sep=1pt]
  \foreach \i in {1,...,4} {
      \node[vertex] (G\i) at (\i, 2) [shape=circle, draw] {};
  }
  \draw[-,blue] (G1) .. controls +(0, .5) and +(0, .5) .. (G2); \draw[-,blue] (G3) .. controls +(0, .5) and +(0, .5) .. (G4);
  \end{tikzpicture}\,,
  \quad
  \begin{tikzpicture}[scale = 0.5,thick, baseline=30pt] 
  \tikzstyle{vertex} = [shape=circle, minimum size=2pt, inner sep=1pt]
  \foreach \i in {1,...,4} {
      \node[vertex] (G\i) at (\i, 2) [shape=circle, draw] {};
  }
  \draw[-,blue] (G1) .. controls +(0, 1) and +(0, 1) .. (G4); \draw[-,blue] (G3) .. controls +(0, .5) and +(0, .5) .. (G2);
  \end{tikzpicture}\,,
\\
W(1):
  \begin{tikzpicture}[scale = 0.5,thick, baseline=30pt] 
  \tikzstyle{vertex} = [shape=circle, minimum size=2pt, inner sep=1pt]
  \foreach \i in {1,...,4} {
      \node[vertex] (G\i) at (\i, 2) [shape=circle, draw] {};
  }
  \draw[-,darkred] (G1) + (0,1) -- (G1) .. controls +(0, .5) and +(0, .5) .. (G2) .. controls +(0, .5) and +(0, .5) .. (G3) .. controls +(0, .5) and +(0, .5) .. (G4);
  \end{tikzpicture}\,,
  \quad
  \begin{tikzpicture}[scale = 0.5,thick, baseline=30pt] 
  \tikzstyle{vertex} = [shape=circle, minimum size=2pt, inner sep=1pt]
  \foreach \i in {1,...,4} {
      \node[vertex] (G\i) at (\i, 2) [shape=circle, draw] {};
  }
  \draw[-,darkred] (G1) + (0,1) -- (G1) .. controls +(0, .5) and +(0, .5) .. (G2); \draw[-,blue] (G3) .. controls +(0, .5) and +(0, .5) .. (G4);
  \end{tikzpicture}\,,
  \quad
  \begin{tikzpicture}[scale = 0.5,thick, baseline=30pt] 
  \tikzstyle{vertex} = [shape=circle, minimum size=2pt, inner sep=1pt]
  \foreach \i in {1,...,4} {
      \node[vertex] (G\i) at (\i, 2) [shape=circle, draw] {};
  }
  \draw[-,blue] (G1) .. controls +(0, .5) and +(0, .5) .. (G2); \draw[-,darkred] (G3) + (0,1) -- (G3) .. controls +(0, .5) and +(0, .5) .. (G4);
  \end{tikzpicture}\,,
  \quad
  \begin{tikzpicture}[scale = 0.5,thick, baseline=30pt] 
  \tikzstyle{vertex} = [shape=circle, minimum size=2pt, inner sep=1pt]
  \foreach \i in {1,...,4} {
      \node[vertex] (G\i) at (\i, 2) [shape=circle, draw] {};
  }
  \draw[-,darkred] (G1) + (0,1) -- (G1) .. controls +(0, 1) and +(0, 1) .. (G4); \draw[-,blue] (G3) .. controls +(0, .5) and +(0, .5) .. (G2);
  \end{tikzpicture}\,,
\\
W(22):
  \begin{tikzpicture}[scale = 0.5,thick, baseline=30pt] 
  \tikzstyle{vertex} = [shape=circle, minimum size=2pt, inner sep=1pt]
  \foreach \i in {1,...,4} {
      \node[vertex] (G\i) at (\i, 2) [shape=circle, draw] {};
  }
  \draw[-,darkred] (G1) + (0,1) -- (G1) .. controls +(0, .5) and +(0, .5) .. (G2);
  \draw[-,darkred] (G3) + (0,1) -- (G3) .. controls +(0, .5) and +(0, .5) .. (G4);
  \end{tikzpicture}\,,
\quad
W(11):
  \begin{tikzpicture}[scale = 0.5,thick, baseline=30pt] 
  \tikzstyle{vertex} = [shape=circle, minimum size=2pt, inner sep=1pt]
  \foreach \i in {1,...,4} {
      \node[vertex] (G\i) at (\i, 2) [shape=circle, draw] {};
  }
  \draw[-,darkred] (G1) + (0,1) -- (G1) .. controls +(0, .5) and +(0, .5) .. (G2) .. controls +(0, .5) and +(0, .5) .. (G3);
  \draw[-,darkred] (G4) + (0,1) -- (G4);
  \end{tikzpicture}\,,
  \quad
  \begin{tikzpicture}[scale = 0.5,thick, baseline=30pt] 
  \tikzstyle{vertex} = [shape=circle, minimum size=2pt, inner sep=1pt]
  \foreach \i in {1,...,4} {
      \node[vertex] (G\i) at (\i, 2) [shape=circle, draw] {};
  }
  \draw[-,darkred] (G2) + (0,1) -- (G2) .. controls +(0, .5) and +(0, .5) .. (G3) .. controls +(0, .5) and +(0, .5) .. (G4);
  \draw[-,darkred] (G1) + (0,1) -- (G1);
  \end{tikzpicture}\,,
  \quad
  \begin{tikzpicture}[scale = 0.5,thick, baseline=30pt] 
  \tikzstyle{vertex} = [shape=circle, minimum size=2pt, inner sep=1pt]
  \foreach \i in {1,...,4} {
      \node[vertex] (G\i) at (\i, 2) [shape=circle, draw] {};
  }
  \draw[-,blue] (G3) .. controls +(0, .5) and +(0, .5) .. (G4); \draw[-,darkred] (G1) + (0,1) -- (G1);
  \draw[-,darkred] (G2) + (0,1) -- (G2);
  \end{tikzpicture}\,,
  \quad
  \begin{tikzpicture}[scale = 0.5,thick, baseline=30pt] 
  \tikzstyle{vertex} = [shape=circle, minimum size=2pt, inner sep=1pt]
  \foreach \i in {1,...,4} {
      \node[vertex] (G\i) at (\i, 2) [shape=circle, draw] {};
  }
  \draw[-,blue] (G2) .. controls +(0, .5) and +(0, .5) .. (G3); \draw[-,darkred] (G1) + (0,1) -- (G1);
  \draw[-,darkred] (G4) + (0,1) -- (G4);
  \end{tikzpicture}\,,
  \quad
  \begin{tikzpicture}[scale = 0.5,thick, baseline=30pt] 
  \tikzstyle{vertex} = [shape=circle, minimum size=2pt, inner sep=1pt]
  \foreach \i in {1,...,4} {
      \node[vertex] (G\i) at (\i, 2) [shape=circle, draw] {};
  }
  \draw[-,blue] (G1) .. controls +(0, .5) and +(0, .5) .. (G2); \draw[-,darkred] (G3) + (0,1) -- (G3);
  \draw[-,darkred] (G4) + (0,1) -- (G4);
  \end{tikzpicture}\,,
\\
W(211):
  \begin{tikzpicture}[scale = 0.5,thick, baseline=30pt] 
  \tikzstyle{vertex} = [shape=circle, minimum size=2pt, inner sep=1pt]
  \foreach \i in {1,...,4} {
      \node[vertex] (G\i) at (\i, 2) [shape=circle, draw] {};
  }
  \draw[-,darkred] (G1) + (0,1) -- (G1) .. controls +(0, .5) and +(0, .5) .. (G2);
  \draw[-,darkred] (G3) + (0,1) -- (G3);
  \draw[-,darkred] (G4) + (0,1) -- (G4);
  \end{tikzpicture}\,,
\quad
W(121):
  \begin{tikzpicture}[scale = 0.5,thick, baseline=30pt] 
  \tikzstyle{vertex} = [shape=circle, minimum size=2pt, inner sep=1pt]
  \foreach \i in {1,...,4} {
      \node[vertex] (G\i) at (\i, 2) [shape=circle, draw] {};
  }
  \draw[-,darkred] (G2) + (0,1) -- (G2) .. controls +(0, .5) and +(0, .5) .. (G3);
  \draw[-,darkred] (G1) + (0,1) -- (G1);
  \draw[-,darkred] (G4) + (0,1) -- (G4);
  \end{tikzpicture}\,,
\quad
W(112):
  \begin{tikzpicture}[scale = 0.5,thick, baseline=30pt] 
  \tikzstyle{vertex} = [shape=circle, minimum size=2pt, inner sep=1pt]
  \foreach \i in {1,...,4} {
      \node[vertex] (G\i) at (\i, 2) [shape=circle, draw] {};
  }
  \draw[-,darkred] (G3) + (0,1) -- (G3) .. controls +(0, .5) and +(0, .5) .. (G4);
  \draw[-,darkred] (G1) + (0,1) -- (G1);
  \draw[-,darkred] (G2) + (0,1) -- (G2);
  \end{tikzpicture}\,,
\\
W(1^4):
  \begin{tikzpicture}[scale = 0.5,thick, baseline=30pt] 
  \tikzstyle{vertex} = [shape=circle, minimum size=2pt, inner sep=1pt]
  \foreach \i in {1,...,4} {
      \node[vertex] (G\i) at (\i, 2) [shape=circle, draw] {};
      \draw[-,darkred] (G\i) + (0,1) -- (G\i);
  }
  \end{tikzpicture}\,,
\end{gather*}
We can see that $\dim W(211) = \dim W(121) = \dim W(112)$ and $\dim W(\emptyset) = \dim \mcP\mcE_2(\beta)$.
\end{ex}

\subsection{Planar \texorpdfstring{$r$}{r}-color algebra}
\label{sec:planar_color}

Like $\mcP\mcE_n(\beta)$ as a planar version of $\mcP\mcM_n(\beta)$, we can consider a planar version of $\mcG_n^{(r,1,m)}(\beta)$ for any $m \geq n$.
This is again independent of $m$ whenever $m \geq n$.
Let $\mcP\mcC_{r,n}(\beta)$ denote the corresponding algebra, which we call the \defn{planar $r$-color algebra}.
The set $\Lambda$ is similar to the case $r = 2$: We take all compositions $\lambda$ with parts in $[r]$ (\textit{i.e.}, $\lambda_i \in [r]$) such that $\abs{\lambda} \leq n$ and $\abs{\lambda} \equiv n \pmod{r}$.
Note that compositions are equivalent to words.

Recall that $\mcT\mcL_{2n}(\beta) \iso \mcP\mcP_n(\beta^2)$ has dimension equal to the Catalan numbers given by Equation~\eqref{eq:TL_dim}, which is the planar version of $\mcG_n^{(1,1,m)}(\beta^2)$.
Additionally recall Equation~\eqref{eq:even_planar_set_partitions}, which is for the planar version of $\mcG_n^{(1,1,m)}(\beta^2)$.
Thus, a natural guess is $\dim \mcP\mcC_{r,n}(\beta)$ is a \defn{Fuss--Catalan number} (of type $A_n$; see, \textit{e.g.},~\cite{STW15} and references therein or~\cite[A137211]{OEIS}):
\[
C_n^{(r)} := \frac{1}{rn + 1} \binom{(r+1)n}{n},
\]
Indeed, Edelman showed in~\cite{Edelman80} that the Fuss--Catalan numbers count the number of noncrossing set partitions of $[nr]$ with block sizes that are divisible by $r$, with the enumeration dating back to the work of Fuss~\cite{Fuss91}.
Unfortunately, this is not the case, as Table~\ref{table:dim_planar_color} indicates when compared with
\[
\bigl( C_n^{(3)} \bigr)_{n=0}^{\infty} = (1, 1, 4, 22, 140, 969, 7084, 53820, 420732, 3362260, \ldots).
\]

However, we do have that the dimension of the unique cell module with zero defects is equal to the Fuss--Catalan numbers from the description given by Edelman~\cite{Edelman80}.
Indeed, the following is just a rephrasing of this result.

\begin{prop}[{\cite{Edelman80}}]
\label{prop:planar_color_zero_defect_dim}
There is a diagram of $\mcP\mcC_{r,n}(\beta)$ with zero propagating blocks if and only if $r \divides n$.
Moreover, if $r \divides n$, then let $\lambda = \emptyset$ be the unique index for the cell module corresponding to zero defects in $\mcP\mcC_{r,n}(\beta)$.
Then
\[
\dim W(\emptyset) = C_{n/r}^{(r)}.
\]
\end{prop}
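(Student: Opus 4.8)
The plan is to reduce the statement to Edelman's enumeration of noncrossing set partitions with block sizes divisible by $r$, so that the substantive work lies in correctly identifying the basis of $W(\emptyset)$. First I would analyze when a diagram with no propagating blocks can occur. The basis diagrams of $\mcP\mcC_{r,n}(\beta)$ satisfy Condition~(\ref{cond:blocks}), namely $N(\rho_i) \equiv N'(\rho_i) \pmod{r}$ for every block $\rho_i$. If no block is propagating, then each block lies entirely in $[n]$ or entirely in $[n]'$; for a block $\rho_i \subseteq [n]$ we have $N'(\rho_i) = 0$, so the condition forces $\abs{\rho_i} = N(\rho_i) \equiv 0 \pmod{r}$, and symmetrically for blocks contained in $[n]'$. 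Summing the sizes of the top blocks gives $n \equiv 0 \pmod{r}$, which proves the forward direction. Conversely, when $r \divides n$ one partitions $[n]$ and $[n]'$ into consecutive runs of length $r$, which are noncrossing and satisfy the block-size condition, exhibiting a diagram with zero propagating blocks. This settles the first sentence, and is consistent with $\emptyset \in \Lambda$ requiring $\abs{\lambda} \equiv n \pmod{r}$.

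Next I would identify the cell module. By Theorem~\ref{thm:planar_cellular}, the basis $M(\emptyset)$ of the cell module $W(\emptyset)$ is given by the half diagrams with zero defects, that is, with zero designated propagating blocks, and in the planar setting we take the trivial $\field[\Sym_0]$ factor so that no tableau data is attached. Combining this with the block-size analysis above, each such half diagram is precisely a noncrossing set partition of $[n]'$ (equivalently $[n]$) in which every block has size divisible by $r$. Hence $\dim W(\emptyset) = \abs{M(\emptyset)}$ equals the number of noncrossing set partitions of $[n]$ all of whose block sizes are multiples of $r$.

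To finish, I would invoke Edelman's theorem~\cite{Edelman80}: writing $n = r \cdot (n/r)$, the number of noncrossing set partitions of $[n]$ with all block sizes divisible by $r$ is exactly the Fuss--Catalan number $C_{n/r}^{(r)} = \frac{1}{r(n/r)+1}\binom{(r+1)(n/r)}{n/r}$, which yields $\dim W(\emptyset) = C_{n/r}^{(r)}$ as claimed. Since Edelman's count does the heavy combinatorial lifting, I expect the only real care to be in the bookkeeping of the middle step, namely verifying that the zero-defect half diagrams of $\mcP\mcC_{r,n}(\beta)$ are in natural bijection with the noncrossing, $r$-divisible set partitions with no spurious multiplicities; this modest identification, rather than any enumeration, is the main obstacle, and the proposition is in the end a translation of Edelman's result into the cellular language of Section~\ref{sec:crt} and Theorem~\ref{thm:planar_cellular}.
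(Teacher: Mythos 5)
Your proof is correct and takes essentially the same approach as the paper, which presents Proposition~\ref{prop:planar_color_zero_defect_dim} as an immediate rephrasing of Edelman's enumeration~\cite{Edelman80}: non-propagating blocks must have size divisible by $r$ by Condition~(\ref{cond:blocks}) (forcing $r \divides n$), and the zero-defect half diagrams are exactly the noncrossing set partitions of $[n]$ with all block sizes divisible by $r$, counted by $C_{n/r}^{(r)}$. The only difference is that you write out the divisibility argument and the identification of $M(\emptyset)$ explicitly, which the paper leaves implicit.
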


We can compute $\dim W(\lambda)$ by using the analog of Proposition~\ref{prop:even_planar_cell_dim}, where $E_P(j)$ is replaced by the corresponding Fuss--Catalan number.
Thus, we obtain a closed, if somewhat complicated, formula for $\dim \mcP\mcC_{r,n}$ by Equation~\eqref{eq:dim_formula}.
Moreover, given Equation~\eqref{eq:catalan_square} and Equation~\eqref{eq:motzkin_square} (as well as Theorem~\ref{thm:quasi_partition_cellular} below for the Riordan numbers), we have the following.

\begin{problem}
Determine if the dimensions of the cell modules $W(\lambda)$ can be used to define Fuss--Catalan triangle, or an $r+1$ dimensional simplex, numbers.
\end{problem}

For the remainder of this section, we will focus on the case $n \geq r > n/2$ and can give some explicit compact formulas for the dimensions of the planar $r$-color algebra and its cell modules $W(\lambda)$ (as well as $\abs{\Lambda}$).
From the definitions and Equation~\eqref{eq:planar_uniform_dim}, we have $\dim \mcP\mcC_{r,n}(\beta) = \dim \mcP\mcU_n = 2^{n-1}$ for all $r > n$.
It is easy to see that $\dim \mcP\mcC_{n,n}(\beta) = 2^{n-1} + 1$.
Indeed, we have all of the basis elements of $\mcP\mcC_{n+1,n}(\beta) = \mcP\mcU_n$ (where any such block must be propagating) plus the diagram $\{[n], [n]'\}$ (the unique element in $\mcP\mcC_{n,n}(\beta)$ with no propagating blocks).
Similarly, we can show $\dim \mcP\mcC_{n-1,n}(\beta) = 2^{n-1} + 8$ by noting that there are additional diagrams of the form
\[
\begin{array}{c@{\qquad\qquad}c@{\qquad\qquad}c@{\qquad\qquad}c}
\begin{tikzpicture}[scale = 0.5,thick, baseline={(0,-1ex/2)}] 
\tikzstyle{vertex} = [shape=circle, minimum size=2pt, inner sep=1pt]
\foreach \i in {1,2,3,4} {
    \node[vertex] (G\i) at (\i, 1) [shape=circle, draw] {};
    \node[vertex] (G-\i) at (\i, -1) [shape=circle, draw] {};
}
\draw[-,color=darkred] (G1) -- (G-1);
\draw[-,color=blue] (G2) .. controls +(0, -.5) and +(0, -.5) .. (G3) .. controls +(0, -.5) and +(0, -.5) .. (G4);
\draw[-,color=blue] (G-2) .. controls +(0., .5) and +(0, .5) .. (G-3) .. controls +(0., .5) and +(0, .5) .. (G-4);
\end{tikzpicture}\,,
&
\begin{tikzpicture}[scale = 0.5,thick, baseline={(0,-1ex/2)}] 
\tikzstyle{vertex} = [shape=circle, minimum size=2pt, inner sep=1pt]
\foreach \i in {1,2,3,4} {
    \node[vertex] (G\i) at (\i, 1) [shape=circle, draw] {};
    \node[vertex] (G-\i) at (\i, -1) [shape=circle, draw] {};
}
\draw[-,color=darkred] (G4) -- (G-1);
\draw[-,color=blue] (G1) .. controls +(0, -.5) and +(0, -.5) .. (G2) .. controls +(0, -.5) and +(0, -.5) .. (G3);
\draw[-,color=blue] (G-2) .. controls +(0., .5) and +(0, .5) .. (G-3) .. controls +(0., .5) and +(0, .5) .. (G-4);
\end{tikzpicture}\,,
&
\begin{tikzpicture}[scale = 0.5,thick, baseline={(0,-1ex/2)}] 
\tikzstyle{vertex} = [shape=circle, minimum size=2pt, inner sep=1pt]
\foreach \i in {1,2,3,4} {
    \node[vertex] (G\i) at (\i, 1) [shape=circle, draw] {};
    \node[vertex] (G-\i) at (\i, -1) [shape=circle, draw] {};
}
\draw[-,color=darkred] (G1) -- (G-4);
\draw[-,color=blue] (G2) .. controls +(0, -.5) and +(0, -.5) .. (G3) .. controls +(0, -.5) and +(0, -.5) .. (G4);
\draw[-,color=blue] (G-1) .. controls +(0., .5) and +(0, .5) .. (G-2) .. controls +(0., .5) and +(0, .5) .. (G-3);
\end{tikzpicture}\,,
&
\begin{tikzpicture}[scale = 0.5,thick, baseline={(0,-1ex/2)}] 
\tikzstyle{vertex} = [shape=circle, minimum size=2pt, inner sep=1pt]
\foreach \i in {1,2,3,4} {
    \node[vertex] (G\i) at (\i, 1) [shape=circle, draw] {};
    \node[vertex] (G-\i) at (\i, -1) [shape=circle, draw] {};
}
\draw[-,color=darkred] (G4) -- (G-4);
\draw[-,color=blue] (G1) .. controls +(0, -.5) and +(0, -.5) .. (G2) .. controls +(0, -.5) and +(0, -.5) .. (G3);
\draw[-,color=blue] (G-1) .. controls +(0., .5) and +(0, .5) .. (G-2) .. controls +(0., .5) and +(0, .5) .. (G-3);
\end{tikzpicture}\,,
\\[20pt]
\begin{tikzpicture}[scale = 0.5,thick, baseline={(0,-1ex/2)}] 
\tikzstyle{vertex} = [shape=circle, minimum size=2pt, inner sep=1pt]
\foreach \i in {1,2,3,4} {
    \node[vertex] (G\i) at (\i, 1) [shape=circle, draw] {};
    \node[vertex] (G-\i) at (\i, -1) [shape=circle, draw] {};
}
\draw[-,color=darkred] (G-4) -- (G4) .. controls +(0, -.5) and +(0, -.5) .. (G3) .. controls +(0, -.5) and +(0, -.5) .. (G2) .. controls +(0, -.5) and +(0, -.5) .. (G1);
\draw[-,color=blue] (G-1) .. controls +(0., .5) and +(0, .5) .. (G-2) .. controls +(0., .5) and +(0, .5) .. (G-3);
\end{tikzpicture}\,,
&
\begin{tikzpicture}[scale = 0.5,thick, baseline={(0,-1ex/2)}] 
\tikzstyle{vertex} = [shape=circle, minimum size=2pt, inner sep=1pt]
\foreach \i in {1,2,3,4} {
    \node[vertex] (G\i) at (\i, 1) [shape=circle, draw] {};
    \node[vertex] (G-\i) at (\i, -1) [shape=circle, draw] {};
}
\draw[-,color=darkred] (G-1) -- (G1) .. controls +(0, -.5) and +(0, -.5) .. (G2) .. controls +(0, -.5) and +(0, -.5) .. (G3) .. controls +(0, -.5) and +(0, -.5) .. (G4);
\draw[-,color=blue] (G-2) .. controls +(0., .5) and +(0, .5) .. (G-3) .. controls +(0., .5) and +(0, .5) .. (G-4);
\end{tikzpicture}\,,
&
\begin{tikzpicture}[scale = 0.5,thick, baseline={(0,-1ex/2)}] 
\tikzstyle{vertex} = [shape=circle, minimum size=2pt, inner sep=1pt]
\foreach \i in {1,2,3,4} {
    \node[vertex] (G\i) at (\i, 1) [shape=circle, draw] {};
    \node[vertex] (G-\i) at (\i, -1) [shape=circle, draw] {};
}
\draw[-,color=darkred] (G4) -- (G-4) .. controls +(0, .5) and +(0, .5) .. (G-3) .. controls +(0, .5) and +(0, .5) .. (G-2) .. controls +(0, .5) and +(0, .5) .. (G-1);
\draw[-,color=blue] (G1) .. controls +(0., -.5) and +(0, -.5) .. (G2) .. controls +(0., -.5) and +(0, -.5) .. (G3);
\end{tikzpicture}\,,
&
\begin{tikzpicture}[scale = 0.5,thick, baseline={(0,-1ex/2)}] 
\tikzstyle{vertex} = [shape=circle, minimum size=2pt, inner sep=1pt]
\foreach \i in {1,2,3,4} {
    \node[vertex] (G\i) at (\i, 1) [shape=circle, draw] {};
    \node[vertex] (G-\i) at (\i, -1) [shape=circle, draw] {};
}
\draw[-,color=darkred] (G1) -- (G-1) .. controls +(0, .5) and +(0, .5) .. (G-2) .. controls +(0, .5) and +(0, .5) .. (G-3) .. controls +(0, .5) and +(0, .5) .. (G-4);
\draw[-,color=blue] (G2) .. controls +(0., -.5) and +(0, -.5) .. (G3) .. controls +(0., -.5) and +(0, -.5) .. (G4);
\end{tikzpicture}\,,
\end{array}
\]
generalized to arbitrary $n$.
Extending this argument, we obtain the following lemma.

\begin{lemma}
\label{lemma:pc_dim}
There exists a sequence $(a_j)_{j=0}^{\infty}$ such that for $n \geq r > n / 2$, we have
\[
\dim \mcP\mcC_{r,n}(\beta) = 2^{n-1} + a_{n-r}.
\]
\end{lemma}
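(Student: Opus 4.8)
The plan is to compute $\dim \mcP\mcC_{r,n}(\beta)$ through the cellular dimension identity~\eqref{eq:dim_formula}, $\dim \mcP\mcC_{r,n}(\beta) = \sum_{\lambda \in \Lambda} \bigl(\dim W(\lambda)\bigr)^2$, and to show that every term is governed by $j := n-r$ once $n \geq r > n/2$. First I would pin down $\Lambda$ in this range: it consists of compositions $\lambda$ with parts in $[r]$ satisfying $\abs{\lambda} \leq n$ and $\abs{\lambda} \equiv n \pmod r$. Since $n < 2r$, the only values of $\abs{\lambda}$ meeting the congruence are $\abs{\lambda} = n$ and $\abs{\lambda} = n - r$, so $\Lambda = \Lambda_{=n} \sqcup \Lambda_{=n-r}$. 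Dually, in any half diagram a non-propagating block must have size divisible by $r$ and at most $n < 2r$, hence size exactly $r$, with at most one such block on each side.

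Next I would handle the full-propagating family $\Lambda_{=n}$. Here $\abs{\lambda} = n$ forces every block to propagate with size equal to its color (the minimal positive residue modulo $r$), and planarity forces these blocks to be consecutive intervals, so $\dim W(\lambda) = 1$ and the contribution is $\abs{\Lambda_{=n}} = c_r(n)$, the number of compositions of $n$ with all parts $\leq r$. I would then identify $2^{n-1} - c_r(n)$ with the number of compositions of $n$ having a part $> r$; as $n < 2r$ there is exactly one such oversized part, of value $r + i$ with $1 \leq i \leq j$, and deleting it leaves a composition of $j - i < r$ together with a choice of insertion slot. This count is manifestly a function $g(j)$ of $j$ alone, so $c_r(n) = 2^{n-1} - g(j)$.

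For the family $\Lambda_{=n-r}$ note that $\abs{\lambda} = j < r$, so every part is automatically $\leq r$ and $\lambda$ ranges over all compositions of $j$. I would invoke the $r$-color analog of Proposition~\ref{prop:even_planar_cell_dim} (with $E_P$ replaced by the Fuss--Catalan numbers, as indicated after Proposition~\ref{prop:planar_color_zero_defect_dim}) to write $\dim W(\lambda)$ as a sum over ways of cutting $[n]'$ into regions adapted to the propagating blocks, weighted by Fuss--Catalan factors recording the $r$-divisible filling of each region. The decisive point is that when $r > n/2$ every region has size $< 2r$, so each region's $r$-divisible excess is $0$ or $r$ and the only Fuss--Catalan numbers that occur are $C_0^{(r)} = C_1^{(r)} = 1$. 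Hence all Fuss--Catalan weights equal $1$, and $\dim W(\lambda)$ collapses to the purely combinatorial placement count, which depends only on the word $\lambda$ (hence on $j$) and not on $r$ or $n$. Summing, the contribution of $\Lambda_{=n-r}$ is $\sum_{\lambda}\bigl(\dim W(\lambda)\bigr)^2 =: h(j)$, and combining the two families gives $\dim \mcP\mcC_{r,n}(\beta) = \bigl(2^{n-1} - g(j)\bigr) + h(j) = 2^{n-1} + a_j$ with $a_j := h(j) - g(j)$, which is the assertion; one checks $a_0 = 1$ and $a_1 = 8$ against the values recorded before the lemma.

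The main obstacle is establishing the $r$-color analog of Proposition~\ref{prop:even_planar_cell_dim} cleanly and verifying the region-size bound, namely that in the noncrossing setting a single $r$-divisible block cannot straddle a propagating defect strand, forcing every region to have size $< 2r$ with excess in $\{0, r\}$; this is exactly the step that makes the collapse $C_0^{(r)} = C_1^{(r)} = 1$ available and where $r$-independence is genuinely forced rather than assumed. A secondary, more routine point is justifying $\dim W(\lambda) = 1$ for $\lambda \in \Lambda_{=n}$ through the interval argument (no propagating block can nest another), which generalizes the parity statement used in the proof of Theorem~\ref{thm:planar_even_cell_modules}.
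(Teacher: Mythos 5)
Your argument is correct, but it is a genuinely different proof from the paper's. The paper proves Lemma~\ref{lemma:pc_dim} by a direct count of the diagram basis, with no cellular machinery at all: since $2r > n$, any diagram of $\mcP\mcC_{r,n}(\beta)$ outside $\mcP\mcU_n$ has, in each half diagram, exactly one non-propagating block, necessarily of size $r$ and occupying consecutive positions, with all remaining $n-r$ vertices lying in defect blocks; hence the diagrams beyond the $2^{n-1}$ of $\mcP\mcU_n$ are counted by a quantity depending on $n-r$ alone. You instead route everything through the cellular dimension identity \eqref{eq:dim_formula}, splitting $\Lambda$ into the families $\abs{\lambda}=n$ and $\abs{\lambda}=n-r$ and showing the first contributes $2^{n-1}$ minus a function of $j=n-r$ while the second contributes a function of $j$ alone. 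This is essentially the paper's own strategy --- but for the unlabeled corollary \emph{after} the lemma (the closed formula $2^{n-1}+(9j^2+17j+6)2^{j-3}$), not for the lemma itself: your $h(j)-g(j)$ is exactly that closed form, and your numerics check out ($a_0=1$, $a_1=8$, and further $h(2)-g(2)=41-3=38$, $h(3)-g(3)=146-8=138$, matching Table~\ref{table:dim_planar_color}). The trade-off is clear: the paper's proof is shorter and more elementary but yields only the existence of $(a_j)$, whereas yours yields the explicit formula, at the cost of needing the cell-module dimensions --- your flagged ``main obstacle.'' For that step you propose the $r$-color analog of Proposition~\ref{prop:even_planar_cell_dim} with all Fuss--Catalan weights collapsing to $C_0^{(r)}=C_1^{(r)}=1$; this works, but it is heavier than necessary, since in this regime the paper's Proposition~\ref{prop:dim_large_r_PC_cell} obtains $\dim W(\lambda)=j+1+\ell(\lambda)$ directly: the $r$ surplus vertices either form a consecutive non-propagating block ($j+1$ placements) or adjoin one of the $\ell(\lambda)$ defect blocks.
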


\begin{proof}
Since $n \geq r > n/2$, there can be at most one additional block of size $r$ on either side that is not propagating or adjoined to a propagating block.
Considering half diagrams, if every block is a defect, it is a half diagram of $\mcP\mcU_n$, which is our base case since $\mcP\mcC_{r,n} = \mcP\mcU_n$ for $r > n$.
Thus, we can assume that every half diagram has exactly one non-propagating block which occupies consecutive positions $\{j', \dotsc, (j+r)'\}$ for $j \in [n-r+1]$.
On the remaining $n-r$ elements, we must have defect blocks.
Therefore, the number of additional diagrams not in $\mcP\mcU_n \subseteq \mcP\mcC_{r,n}$ depends only on $n-r$.
\end{proof}

Expanding slightly on the proof of Lemma~\ref{lemma:pc_dim}, we can obtain explicit dimension formulas for the cell modules.

\begin{prop}
\label{prop:dim_large_r_PC_cell}
Let $r > n/2$.
Then the dimensions of the cell modules of $\mcP\mcC_{r,n}$ is given by
\[
\dim W(\lambda) = \begin{cases}
1 & \text{if } \abs{\lambda} = n, \\
n - r + 1 + \ell(\lambda) & \text{if } \abs{\lambda} = n - r.
\end{cases}
\]
\end{prop}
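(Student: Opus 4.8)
The plan is to compute $\dim W(\lambda) = \abs{M(\lambda)}$ by counting the half diagrams that index the cell module, exactly as in Proposition~\ref{prop:even_planar_cell_dim} and the proof of Lemma~\ref{lemma:pc_dim}. Recall that such a half diagram is a planar set partition of $[n]'$ whose defect (propagating) blocks, read left to right by smallest element, have colors (sizes reduced modulo $r$ into $\{1, \dots, r\}$) equal to $\lambda_1, \dots, \lambda_{\ell}$, whose remaining blocks have size divisible by $r$, and whose defect blocks are pairwise non-nested (this last condition being forced by planarity, since a defect nested inside another has its upward strand blocked). Writing each defect size as $\lambda_i + r a_i$ with $a_i \geq 0$ and letting $rb$ be the number of non-defect elements, we get $n = \abs{\lambda} + r(\sum_i a_i + b)$. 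Since $r > n/2$, the nonnegative integer $\sum_i a_i + b$ is at most $1$, so $\abs{\lambda} \in \{n, n-r\}$, matching the two cases of the statement.

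First I would record the elementary observation used throughout: if a planar half diagram has all of $[n]'$ contained in defect blocks of prescribed exact sizes $s_1, \dots, s_{\ell}$ in left-to-right order, then it is unique, namely the partition into consecutive intervals $\{1', \dots, s_1'\}, \{(s_1+1)', \dots\}, \dotsc$. Indeed, the block containing $1'$ cannot skip an element, since the skipped element would lie in a later block that is either nested inside it or crosses it, both excluded; the claim then follows by induction. This settles the case $\abs{\lambda} = n$, where $\sum_i a_i + b = 0$ forces every defect to have its exact size $\lambda_i$ and leaves no non-defect block, so $\dim W(\lambda) = 1$.

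For $\abs{\lambda} = n - r$ we have $\sum_i a_i + b = 1$, yielding two disjoint families of half diagrams. In the first, $b = 0$ and a single $a_j = 1$: every element lies in a defect, one defect has the inflated size $\lambda_j + r$ (still of color $\lambda_j$) and the rest have exact sizes, so by the consecutive-interval observation there is exactly one diagram for each $j \in \{1, \dots, \ell\}$, contributing $\ell$ diagrams. In the second, $b = 1$: there is a unique non-defect block $N$, of size exactly $r$, and all defects have their exact sizes. Planarity forces $N$ to be a consecutive interval $\{j', \dots, (j+r-1)'\}$, since no defect element may lie strictly between $\min N$ and $\max N$ (its upward strand would cross an arc of $N$) and $N$ is the only non-defect block; this gives $n - r + 1$ choices of $j$.

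The remaining point, which I expect to be the main obstacle, is to show that for each position of $N$ the surrounding defects can be arranged in exactly one planar non-nested way. I would contract $N$ to a point: the elements of $[n]' \setminus N$ inherit a linear order of length $\abs{\lambda}$, a defect wrapping around $N$ (using elements on both sides) becomes a single block straddling the contraction point, and every other defect lies entirely on one side. This gives a bijection between valid arrangements around a fixed $N$ and planar non-nested partitions of a linearly ordered $\abs{\lambda}$-element set into defects of exact sizes $\lambda_1, \dots, \lambda_{\ell}$, of which there is exactly one by the consecutive-interval observation. Translating back, the straddling block becomes an arc enclosing $N$, which is planar and encloses no defect, so the correspondence is valid. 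Hence the second family contributes $n - r + 1$ diagrams, and as the two families—and the diagrams within each—are manifestly distinct, $\dim W(\lambda) = (n - r + 1) + \ell = n - r + 1 + \ell(\lambda)$.
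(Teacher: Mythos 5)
Your proof is correct and takes essentially the same route as the paper: both split the $\abs{\lambda} = n-r$ case into the $n-r+1$ placements of a consecutive non-defect block of size $r$ plus the $\ell(\lambda)$ ways of attaching the extra $r$ elements to one of the defect blocks, with the $\abs{\lambda}=n$ case reducing to the uniform-block situation. The only difference is that you make explicit the uniqueness arguments the paper leaves implicit --- the consecutive-interval observation and the contraction bijection, which correctly accounts for a defect wrapping around the non-defect block (a configuration that does occur, e.g.\ the diagram $\{1',5'\}$, $\{2',3',4'\}$ in $W(2)$ for $\mcP\mcC_{3,5}$).
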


\begin{proof}
If $\abs{\lambda} = n$, then this is the same as $\mcP\mcU_n$; thus we assume $\abs{\lambda} = n - r$.
As mentioned above, there are $n - r + 1$ places to place the consecutive positions of the nondefect block of size $r$.
Then there are also $\ell(\lambda)$ to attach the additional (consecutive) $r$ elements.
\end{proof}

Furthermore, we can obtain a simple formula for $\dim \mcP\mcC_{r,n}(\beta)$ when $r > n/2$.
We separate the cases when $r = n$ and $r > n$ as the formula we give breaks down and these cases have already been given above.

\begin{cor}
Let $n > r > n / 2$.
Then, we have
\[
\dim \mcP\mcC_{r,n} = 
2^{n-1} + (9(n-r)^2 + 17(n-r) + 6) 2^{n-r-3}.
\]
\end{cor}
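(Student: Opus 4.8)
The plan is to apply the dimension identity~\eqref{eq:dim_formula}, which holds since $\mcP\mcC_{r,n}(\beta)$ is cellular by Theorem~\ref{thm:planar_cellular}, and then evaluate the resulting sum over $\Lambda$ using the explicit cell-module dimensions from Proposition~\ref{prop:dim_large_r_PC_cell}. First I would observe that the hypothesis $n/2 < r < n$ collapses the grading to exactly two values: since every $\lambda \in \Lambda$ satisfies $\abs{\lambda} \leq n$ and $\abs{\lambda} \equiv n \pmod{r}$, and since $n - 2r < 0$, the only possibilities are $\abs{\lambda} = n$ and $\abs{\lambda} = n - r$. Writing $t := n - r$ (so that $1 \leq t < r$), Equation~\eqref{eq:dim_formula} together with Proposition~\ref{prop:dim_large_r_PC_cell} gives
\[
\dim \mcP\mcC_{r,n}(\beta) = N_n + \sum_{\abs{\lambda} = t} \bigl( t + 1 + \ell(\lambda) \bigr)^2,
\]
where $N_n$ denotes the number of compositions of $n$ with all parts in $[r]$ (these index the top layer, each contributing $1^2$), and the second sum runs over all compositions of $t$, the constraint "parts in $[r]$" being vacuous as $t < r$.

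For the second sum I would group compositions of $t$ by their number of parts: there are $\binom{t-1}{\ell-1}$ compositions of $t$ into $\ell$ parts, so the sum equals $\sum_{\ell \geq 1} \binom{t-1}{\ell-1} (t+1+\ell)^2$. Reindexing by $j = \ell - 1$ and expanding the square, this is a linear combination of $\sum_j \binom{t-1}{j}$, $\sum_j j \binom{t-1}{j}$, and $\sum_j j^2 \binom{t-1}{j}$, which evaluate to $2^{t-1}$, $(t-1)2^{t-2}$, and $(t-1)t\,2^{t-3}$ respectively; collecting terms yields $2^{t-3}\bigl( 9t^2 + 19t + 8 \bigr)$.

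The main work is computing $N_n$, and this is the step I expect to be the only genuinely delicate one. The key point is that $r > n/2$ forces at most one part of a composition of $n$ to exceed $r$, so $N_n = 2^{n-1}$ minus the number of compositions of $n$ with exactly one part larger than $r$. I would count the latter bijectively: deleting the unique large part $s \in \{r+1, \dotsc, n\}$ leaves a composition of $w := n - s \in \{0, \dotsc, t-1\}$, and conversely a composition of $w$ together with a choice of one of its $\ell + 1$ gaps recovers the original, giving $\sum_{w=0}^{t-1} \sum_{\mu} (\ell(\mu) + 1)$; a computation analogous to the one above evaluates this to $(t+1)2^{t-2}$. Thus $N_n = 2^{n-1} - (t+1)2^{t-2} = 2^{n-1} - (2t+2)2^{t-3}$, and substituting this together with the second sum into the displayed identity gives
\[
\dim \mcP\mcC_{r,n}(\beta) = 2^{n-1} + 2^{t-3}\bigl( 9t^2 + 17t + 6 \bigr),
\]
since the linear discrepancy $-(2t+2)2^{t-3}$ turns $9t^2 + 19t + 8$ into $9t^2 + 17t + 6$. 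Rewriting $t = n - r$ is the claimed formula. Both binomial evaluations are routine, and the edge case $t = 1$ (that is, $r = n-1$) can be checked directly to confirm the identities for $\sum_j j^2\binom{t-1}{j}$ and the like remain valid there.
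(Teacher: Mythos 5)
Your proposal is correct and follows essentially the same route as the paper: both split $\Lambda$ into the two layers $\abs{\lambda} = n$ and $\abs{\lambda} = n-r$, compute the top layer as $2^{n-1}$ minus the number of compositions of $n$ containing a part exceeding $r$, and finish by evaluating the same binomial sums via Equation~\eqref{eq:dim_formula} and Proposition~\ref{prop:dim_large_r_PC_cell}. The only (immaterial) difference is the bijection used for the overcount---you delete the unique large part and mark a gap, while the paper adds $r$ to a marked part of a composition of $n-r$---and both counts evaluate to $(n-r+1)2^{n-r-2}$.
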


\begin{proof}
Note that number of compositions of $N$ of length $\ell$ are $\binom{N-1}{\ell-1}$.
Every composition of $n - r$ corresponds to a composition of $r$ containing a part of size strictly greater than $r$ and choosing a part of the composition.
So we remove these from our count of $2^{n-1}$.
Then from Proposition~\ref{prop:dim_large_r_PC_cell} and Equation~\eqref{eq:dim_formula}, we have
\begin{equation}
\label{eq:first_dim_PC}
\dim \mcP\mcC_{r,n} = 2^{n-1} - \sum_{\ell=1}^{n-r} \binom{n-r-1}{\ell-1} \ell + \sum_{\ell=1}^{n-r} \binom{n-r-1}{\ell-1} (n - r + 1 + \ell)^2.
\end{equation}
Then by well-known binomial coefficient sums (from the binomial theorem), we have
\begin{align*}
\sum_{\ell=0}^N \binom{N}{\ell} (M + \ell) = (N + 2M) 2^{N-1},
\quad
\sum_{\ell=0}^N \binom{N}{\ell} (M + \ell)^2
& = (4M^2 + 4MN + N + N^2) 2^{N-2},
\end{align*}
which applied to Equation~\eqref{eq:first_dim_PC} yields the claim after some simple manipulations.
\end{proof}

\begin{table}
\[
\begin{array}{c|ccccccc}
& 1 & 2 & 3 & 4 & 5 & 6 & 7 \\\hline
1& 2 & 14 & 132 & 1430 & 16796 & 208012 & 2674440 \\
2& 1 & 3 & 12 & 55 & 273 & 1428 & 7752\\
3& \cdot & 2 & 5 & 16 & 54 & 186 & 689 \\
4& \cdot & \cdot & 4 & 9 & 24 & 70 & 202 \\
5& \cdot & \cdot & \cdot & 8 & 17 & 40 & 102 \\
6& \cdot & \cdot & \cdot & \cdot & 16 & 33 & 72 \\
7& \cdot & \cdot & \cdot & \cdot & \cdot & 32 & 65 \\
8& \cdot & \cdot & \cdot & \cdot & \cdot & \cdot & 64 \\
\end{array}
\]
\caption{The $(r,n)$-th entry is equal to $\dim \mcP\mcC_{r,n}(\beta)$. Note that every entry marked with a $\cdot$ is equal to the entry above it, which equals $\dim \mcP\mcU_n = 2^{n-1}$ (and also holds for the subdiagonal entries; see Equation~\eqref{eq:planar_uniform_dim}).}
\label{table:dim_planar_color}
\end{table}

\begin{ex}
Consider $\mcP\mcC_{3,5}$.
We have
\[
\dim W(\lambda) = 1
\quad (\abs{\lambda} = 5),
\qquad
\dim W(2) = 4,
\qquad
\dim W(11) = 5,
\]
with the half diagrams spanning $W(2)$ and $W(11)$ being
\begin{align*}
W(2) : & \qquad
\begin{tikzpicture}[scale = 0.5,thick, baseline={(0,2ex/2)}] 
\tikzstyle{vertex} = [shape=circle, minimum size=2pt, inner sep=1pt]
\foreach \i in {1,2,3,4,5} {
    \node[vertex] (G-\i) at (\i, 0) [shape=circle, draw] {};
}
\draw[-,color=darkred] (G-2) .. controls +(0., .5) and +(0, .5) .. (G-1) -- ++ (0,1);
\draw[-,color=blue] (G-3) .. controls +(0., .5) and +(0, .5) .. (G-4) .. controls +(0., .5) and +(0, .5) .. (G-5);
\end{tikzpicture}\,,
\qquad\qquad
\begin{tikzpicture}[scale = 0.5,thick, baseline={(0,2ex/2)}] 
\tikzstyle{vertex} = [shape=circle, minimum size=2pt, inner sep=1pt]
\foreach \i in {1,2,3,4,5} {
    \node[vertex] (G-\i) at (\i, 0) [shape=circle, draw] {};
}
\draw[-,color=darkred] (G-5) .. controls +(0., 1.0) and +(0, 1.0) .. (G-1) -- ++ (0,1);
\draw[-,color=blue] (G-2) .. controls +(0., .5) and +(0, .5) .. (G-3) .. controls +(0., .5) and +(0, .5) .. (G-4);
\end{tikzpicture}\,,
\qquad\qquad
\begin{tikzpicture}[scale = 0.5,thick, baseline={(0,2ex/2)}] 
\tikzstyle{vertex} = [shape=circle, minimum size=2pt, inner sep=1pt]
\foreach \i in {1,2,3,4,5} {
    \node[vertex] (G-\i) at (\i, 0) [shape=circle, draw] {};
}
\draw[-,color=darkred] (G-5) .. controls +(0., .5) and +(0, .5) .. (G-4) -- ++ (0,1);
\draw[-,color=blue] (G-1) .. controls +(0., .5) and +(0, .5) .. (G-2) .. controls +(0., .5) and +(0, .5) .. (G-3);
\end{tikzpicture}\,,
\qquad\qquad
\begin{tikzpicture}[scale = 0.5,thick, baseline={(0,2ex/2)}] 
\tikzstyle{vertex} = [shape=circle, minimum size=2pt, inner sep=1pt]
\foreach \i in {1,2,3,4,5} {
    \node[vertex] (G-\i) at (\i, 0) [shape=circle, draw] {};
}
\draw[-,color=darkred] (G-5) .. controls +(0., .5) and +(0, .5) .. (G-4) .. controls +(0., .5) and +(0, .5) .. (G-3) .. controls +(0., .5) and +(0, .5) .. (G-2) .. controls +(0., .5) and +(0, .5) .. (G-1) -- ++ (0,1);
\end{tikzpicture}\,,
\\
W(11) : & \quad
\begin{tikzpicture}[scale = 0.5,thick, baseline={(0,2ex/2)}] 
\tikzstyle{vertex} = [shape=circle, minimum size=2pt, inner sep=1pt]
\foreach \i in {1,2,3,4,5} {
    \node[vertex] (G-\i) at (\i, 0) [shape=circle, draw] {};
}
\draw[-,color=darkred] (G-1) -- ++ (0,1);
\draw[-,color=darkred] (G-2) -- ++ (0,1);
\draw[-,color=blue] (G-3) .. controls +(0., .5) and +(0, .5) .. (G-4) .. controls +(0., .5) and +(0, .5) .. (G-5);
\end{tikzpicture}\,,
\qquad
\begin{tikzpicture}[scale = 0.5,thick, baseline={(0,2ex/2)}] 
\tikzstyle{vertex} = [shape=circle, minimum size=2pt, inner sep=1pt]
\foreach \i in {1,2,3,4,5} {
    \node[vertex] (G-\i) at (\i, 0) [shape=circle, draw] {};
}
\draw[-,color=darkred] (G-1) -- ++ (0,1);
\draw[-,color=darkred] (G-5) -- ++ (0,1);
\draw[-,color=blue] (G-2) .. controls +(0., .5) and +(0, .5) .. (G-3) .. controls +(0., .5) and +(0, .5) .. (G-4);
\end{tikzpicture}\,,
\qquad
\begin{tikzpicture}[scale = 0.5,thick, baseline={(0,2ex/2)}] 
\tikzstyle{vertex} = [shape=circle, minimum size=2pt, inner sep=1pt]
\foreach \i in {1,2,3,4,5} {
    \node[vertex] (G-\i) at (\i, 0) [shape=circle, draw] {};
}
\draw[-,color=darkred] (G-4) -- ++ (0,1);
\draw[-,color=darkred] (G-5) -- ++ (0,1);
\draw[-,color=blue] (G-1) .. controls +(0., .5) and +(0, .5) .. (G-2) .. controls +(0., .5) and +(0, .5) .. (G-3);
\end{tikzpicture}\,,
\qquad
\begin{tikzpicture}[scale = 0.5,thick, baseline={(0,2ex/2)}] 
\tikzstyle{vertex} = [shape=circle, minimum size=2pt, inner sep=1pt]
\foreach \i in {1,2,3,4,5} {
    \node[vertex] (G-\i) at (\i, 0) [shape=circle, draw] {};
}
\draw[-,color=darkred] (G-5) -- ++ (0,1);
\draw[-,color=darkred] (G-4) .. controls +(0., .5) and +(0, .5) .. (G-3) .. controls +(0., .5) and +(0, .5) .. (G-2) .. controls +(0., .5) and +(0, .5) .. (G-1) -- ++ (0,1);
\end{tikzpicture}\,,
\qquad
\begin{tikzpicture}[scale = 0.5,thick, baseline={(0,2ex/2)}] 
\tikzstyle{vertex} = [shape=circle, minimum size=2pt, inner sep=1pt]
\foreach \i in {1,2,3,4,5} {
    \node[vertex] (G-\i) at (\i, 0) [shape=circle, draw] {};
}
\draw[-,color=darkred] (G-1) -- ++ (0,1);
\draw[-,color=darkred] (G-5) .. controls +(0., .5) and +(0, .5) .. (G-4) .. controls +(0., .5) and +(0, .5) .. (G-3) .. controls +(0., .5) and +(0, .5) .. (G-2) -- ++ (0,1);
\end{tikzpicture}\,.
\end{align*}
We see that there are $13 = 2^4 - 1 - 2$ compositions of $5$ with parts in $\{1,2,3\}$, and we verify that
\[
\dim \mcP\mcC_{3,5} = 54 = 13 + 1 \cdot 4^2 + 1 \cdot 5^2 = \sum_{\lambda \in \Lambda} \bigl( \dim W(\lambda) \bigr)^2.
\]
\end{ex}

Additional examples of the dimensions of $\mcP\mcC_{r,n}(\beta)$ is given in Table~\ref{table:dim_planar_color}, which was computed by directly counting the number of diagrams in the basis using \textsc{SageMath}.
Furthermore, we see that the sequence in Lemma~\ref{lemma:pc_dim} is given by
\[
a_j = (9j^2 + 17j + 6) 2^{j-3}
\]
for $j > 0$ with $a_0 = 1$.
Some initial terms are
\[
(1, 8, 38, 138, 436, 1264, 3456, 9056, 22976, \ldots).
\]

\subsection{Planar quasi-partition algebra}

By~\cite[Lemma~2.2]{DO14}, the basis of the quasi-partition algebra $\mcQ\mcP_n(\beta)$ is given inside of $\mcP_n(\beta-1)$ by looking at certain refinements of the indexing diagram.
From this and~\cite[Cor.~2.7]{DO14}, we can restrict to the set of planar diagrams without singletons and this can be constructed as a subalgebra of $\mcP\mcP_n(\beta-1)$.
We call this the \defn{planar quasi-partition alagebra} and denote it by $\mcP\mcQ\mcP_n(\beta)$.
By~\cite[Thm.~5.12]{BE18} (see also~\cite[A099251]{OEIS}), we have
\[
\dim \mcP\mcQ\mcP_n(\beta) = R_N = \frac{1}{N+1} \sum_{k=1}^{\lfloor N/2 \rfloor} \binom{N+1}{k} \binom{N-k-1}{k-1} = \frac{N-1}{N+1} \left(2 R_{N-1} + 3 R_{N-2} \right),
\]
where $N = 2n$ and $R_N$ are the Riordan numbers~\cite{Riordan75} (see also~\cite[A005043]{OEIS}), with $R_0 = 1$ and $R_1 = 1$.
This can be described as the number of Motzkin paths from $(0,0)$ to $(n,0)$ that do not have any horizontal steps on the $y=0$ line.

Let us consider the algebra given in~\cite[Sec.~5]{BE18}, which we call the \defn{tangle algebra} and denote it by $\mcT_n(\gamma)$; here we add the parameter $\gamma$ that counts the number of interior components in the product.
Here we consider $\mcT_n(\gamma)$ as an abstract algebra defined by a basis (in~\cite[Sec.~5]{BE18}, these are the normalized $3$-tangles) and relations.
As noted in~\cite{BE18}, the tangle algebra has the same dimension as $\mcP\mcQ\mcP_n(\beta)$.
Furthermore, the tangle algebra for $\gamma = 1$ has a Schur--Weyl duality property~\cite[Thm.~5.7]{BE18} (see also~\cite[Rem.~5.13]{BE18}) similar to the cases considered here.
This leads to the following conjecture.

\begin{conj}
\label{conj:tangle_pqp}
Let $\field$ be a field with $\abs{\field} > n + 1$.
Let $\gamma \neq 0$.
There exists a $\beta \in \field$ such that
\[
\mcT_n(\gamma) \iso \mcP\mcQ\mcP_n(\beta).
\]
\end{conj}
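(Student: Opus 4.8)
The plan is to construct an explicit isomorphism at the level of diagram bases, with $\beta$ determined from $\gamma$ by matching the scalars produced when interior components are contracted. First I would fix compatible bases: on the one side the tangle basis of $\mcT_n(\gamma)$ from~\cite{BE18}, whose product stacks two diagrams and multiplies by $\gamma^D$ for the $D$ interior components created; on the other the inclusion--exclusion basis of $\mcP\mcQ\mcP_n(\beta)$ inside $\mcP\mcP_n(\beta-1)$ supplied by~\cite[Lemma~2.2]{DO14} and~\cite[Cor.~2.7]{DO14}, restricted to planar singleton-free diagrams. Both sets are enumerated by $R_{2n}$, so there is a bijection $\Psi_0$ between them; the natural choice pairs a tangle with the noncrossing singleton-free set partition recording only which strands connect which boundary points. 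I would take $\Psi$ to be the $\field$-linear extension of $\Psi_0$, which is automatically a linear isomorphism since $\dim \mcT_n(\gamma) = \dim \mcP\mcQ\mcP_n(\beta) = R_{2n}$.

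Next I would pin down the relation between the two parameters. Multiplication in $\mcP\mcQ\mcP_n(\beta)$ is controlled by $\beta - 1$ through the base algebra $\mcP\mcP_n(\beta-1)$, but the signed, inclusion--exclusion nature of the basis means that closing off an interior component does not simply yield a power of $\beta - 1$; instead it contributes a polynomial weight $g(\beta)$ fixed by the refinement coefficients of~\cite[Lemma~2.2]{DO14}. Setting this weight equal to the single tangle scalar $\gamma$ gives the equation that defines $\beta$, in the spirit of the squared parameter in Theorem~\ref{thm:TL_planer_iso}. The hypotheses $\gamma \neq 0$ and $\abs{\field} > n + 1$ are exactly what should be needed to solve this equation inside $\field$ (avoiding the finitely many degenerate specializations), possibly after the harmless extension of scalars permitted by Remark~\ref{rem:algebraic_closure}.

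With $\beta$ so chosen, the remaining task is to verify that $\Psi$ respects multiplication. Since $\Psi$ is already a linear bijection, it suffices to check the homomorphism property on a generating set. I would use a presentation of $\mcT_n(\gamma)$ by Temperley--Lieb-type cap/cup generators together with the identity, and send each generator to the image of the corresponding singleton-free planar diagram under the inclusion $\mcP\mcQ\mcP_n(\beta) \subseteq \mcP\mcP_n(\beta-1)$; one then confirms that each defining relation of the tangle algebra holds after applying $\Psi$, with the loop-removal relations reproduced precisely by the identity $g(\beta) = \gamma$. As an independent consistency check, once the isomorphism is in hand it would transport the cellular structure of $\mcP\mcQ\mcP_n(\beta)$ from Theorem~\ref{thm:planar_cellular} to $\mcT_n(\gamma)$, and the Schur--Weyl duality of~\cite[Thm.~5.7]{BE18} at $\gamma = 1$ pins down where the $\gamma = 1$ specialization must land.

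The hard part will be the second step: computing $g(\beta)$ and proving that \emph{every} interior component arising in a product of inclusion--exclusion basis elements contributes the same scalar, independent of the surrounding diagram. This is the same delicacy that made Theorem~\ref{thm:quasi_cellular} nontrivial, since the basis is a signed sum over refinements rather than a single diagram. A sensible reduction is to first establish the purely combinatorial bijection $\Psi_0$ and its compatibility with the \emph{unweighted} composition of diagrams, and only afterwards track the parameter, so that the scalar bookkeeping is isolated from the matching of diagram shapes.
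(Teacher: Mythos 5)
This statement is a \emph{conjecture} in the paper: there is no proof to compare against. The only evidence the paper offers is a brute-force computation for $n = 2$ exhibiting an explicit isomorphism $\mcP\mcQ\mcP_2(\beta) \to \mcT_2(\gamma)$, together with dimension counts and the Schur--Weyl duality considerations surrounding~\cite[Thm.~5.7]{BE18}. Your proposal is therefore not being measured against an existing argument, and it would need to be a complete proof on its own terms --- which it is not: you yourself flag that the central step (computing the weight $g(\beta)$ and showing every interior component contributes the same scalar, independently of the ambient diagram) is unresolved. That step is precisely the open content of the conjecture, so the proposal as written is a research plan, not a proof.

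Beyond the admitted gap, one structural assumption in your plan already conflicts with the paper's own $n = 2$ evidence. You propose to define $\Psi$ as the linear extension of a bijection $\Psi_0$ between diagram-indexed bases, with the parameter relation absorbed into a uniform scalar. But in the explicit $n = 2$ isomorphism recorded in the paper, the basis element of $\mcP\mcQ\mcP_2(\beta)$ corresponding to the single propagating block $\{\{1,2,1',2'\}\}$ is sent to a genuine linear combination of \emph{two} tangle diagrams, with coefficients $-\tfrac{2\beta}{\beta-2}$ and $\tfrac{2}{\beta-2}$; moreover the scalars appearing on the other generators ($1$ and $\tfrac{\gamma}{\beta-1}$) are not powers of a single quantity. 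So no map sending each basis diagram to a scalar multiple of a single tangle diagram can be an algebra isomorphism, and the parameter matching cannot be a single equation $g(\beta) = \gamma$ in the sense you describe. Any viable approach must allow basis elements to map to nontrivial linear combinations (as the inclusion--exclusion structure of the Doty--Orellana basis already suggests), and must explain how the various rational-function coefficients fit together for general $n$ --- for instance via the Schur--Weyl route the paper sketches in the follow-up conjecture, by showing both algebras act as the full centralizer of $U(\fsl_2)$ on $V(2)^{\otimes n}$, rather than by direct diagram bookkeeping.
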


By a brute-force computation, we can see that
\[
\begin{tikzpicture}[scale = 0.5,thick, baseline={(0,-1ex/2)}] 
\tikzstyle{vertex} = [shape=circle, minimum size=2pt, inner sep=1pt]
\foreach \i in {1,2} {
    \node[vertex] (G\i) at (\i, 1) [shape=circle, draw] {};
    \node[vertex] (G-\i) at (\i, -1) [shape=circle, draw] {};
}
\draw[-] (G-1) -- (G1);
\draw[-] (G-2) -- (G2);
\end{tikzpicture}
\, \mapsto \,
\begin{tikzpicture}[scale = 0.5,thick, baseline={(0,-1ex/2)}] 
\tikzstyle{vertex} = [shape=circle, minimum size=2pt, inner sep=1pt, fill=OCUenji]
\foreach \i in {1,2} {
    \node[vertex] (G\i) at (\i, 1) [shape=circle, draw] {};
    \node[vertex] (G-\i) at (\i, -1) [shape=circle, draw] {};
}
\draw[-] (G-1) -- (G1);
\draw[-] (G-2) -- (G2);
\end{tikzpicture}\,,
\qquad
\begin{tikzpicture}[scale = 0.5,thick, baseline={(0,-1ex/2)}] 
\tikzstyle{vertex} = [shape=circle, minimum size=2pt, inner sep=1pt]
\foreach \i in {1,2} {
    \node[vertex] (G\i) at (\i, 1) [shape=circle, draw] {};
    \node[vertex] (G-\i) at (\i, -1) [shape=circle, draw] {};
}
\draw[-] (G1) .. controls +(0, -.5) and +(0, -.5) .. (G2);
\draw[-] (G-1) .. controls +(0, .5) and +(0, .5) .. (G-2);
\end{tikzpicture}
\, \mapsto \,
\frac{\gamma}{\beta - 1}\,
\begin{tikzpicture}[scale = 0.5,thick, baseline={(0,-1ex/2)}] 
\tikzstyle{vertex} = [shape=circle, minimum size=2pt, inner sep=1pt, fill=OCUenji]
\foreach \i in {1,2} {
    \node[vertex] (G\i) at (\i, 1) [shape=circle, draw] {};
    \node[vertex] (G-\i) at (\i, -1) [shape=circle, draw] {};
}
\draw[-] (G1) -- ++(.5, -.5) -- (G2);
\draw[-] (G-1) -- ++(.5, .5) -- (G-2);
\end{tikzpicture}\,,
\qquad
\begin{tikzpicture}[scale = 0.5,thick, baseline={(0,-1ex/2)}] 
\tikzstyle{vertex} = [shape=circle, minimum size=2pt, inner sep=1pt]
\foreach \i in {1,2} {
    \node[vertex] (G\i) at (\i, 1) [shape=circle, draw] {};
    \node[vertex] (G-\i) at (\i, -1) [shape=circle, draw] {};
}
\draw[-] (G-1) -- (G1) .. controls +(0, -.5) and +(0, -.5) .. (G2) -- (G-2) .. controls +(0, .5) and +(0, .5) .. (G-1);
\end{tikzpicture}
\, \mapsto \,
- \frac{2 \beta}{\beta - 2}\,
\begin{tikzpicture}[scale = 0.5,thick, baseline={(0,-1ex/2)}] 
\tikzstyle{vertex} = [shape=circle, minimum size=2pt, inner sep=1pt, fill=OCUenji]
\foreach \i in {1,2} {
    \node[vertex] (G\i) at (\i, 1) [shape=circle, draw] {};
    \node[vertex] (G-\i) at (\i, -1) [shape=circle, draw] {};
}
\draw[-] (G1) -- ++(.5, -.5) -- (G2);
\draw[-] (G-1) -- ++(.5, .5) -- (G-2);
\draw[-] (1.5,-.5) -- (1.5,.5);
\end{tikzpicture}
+ \frac{2}{\beta - 2}\,
\begin{tikzpicture}[scale = 0.5,thick, baseline={(0,-1ex/2)}] 
\tikzstyle{vertex} = [shape=circle, minimum size=2pt, inner sep=1pt, fill=OCUenji]
\foreach \i in {1,2} {
    \node[vertex] (G\i) at (\i, 1) [shape=circle, draw] {};
    \node[vertex] (G-\i) at (\i, -1) [shape=circle, draw] {};
}
\draw[-] (G1) -- ++(.5, -.5) -- (G2);
\draw[-] (G-1) -- ++(.5, .5) -- (G-2);
\end{tikzpicture}\,,
\]
defines an isomorphism $\mcP\mcQ\mcP_2(\beta) \to \mcT_2(\gamma)$.
It would be interesting to determine for which values $\gamma$ and $\beta$ we have an isomorphism.
(Clearly when $\beta = 1, 2$, the above map is not valid.)

We define the \defn{Riordan triangle numbers} recursively by
\begin{equation}
\label{eq:riordan_triangle_recursion}
R_{n,\lambda} = \begin{cases}
R_{n-1,\lambda+1} + R_{n-1,\lambda} + R_{n-,\lambda-1} & \text{if } \lambda > 0, \\
R_{n-1,1} & \text{if } \lambda = 0,
\end{cases}
\end{equation}
with $R_{n,n} = 1$ and $R_{1,0} = 0$~\cite{Bernhart97,MRSV97}.

\begin{thm}
\label{thm:quasi_partition_cellular}
The planar quasi-partition algebra $\mcP\mcQ\mcP_n(\beta)$ is a cellular algebra of dimension $R_{2n}$ with $\Lambda = \{0, 1, \dotsc, n\}$.
Furthermore, for any $\lambda \in \Lambda$, we have
\[
\dim W(\lambda) = R_{n,\lambda}.
\]
Moreover, we have
\[
R_{2n} = \sum_{\lambda=0}^n R_{n,\lambda}^2.
\]
\end{thm}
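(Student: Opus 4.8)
The plan is to obtain the identity $R_{2n} = \sum_{\lambda=0}^n R_{n,\lambda}^2$ as a direct specialization of the general cellular dimension formula~\eqref{eq:dim_formula}, in exact parallel with how Equation~\eqref{eq:catalan_square} and Equation~\eqref{eq:motzkin_square} were derived for the Temperley--Lieb and Motzkin algebras. Granting the remaining assertions of the theorem---that $\mcP\mcQ\mcP_n(\beta)$ is cellular with $\Lambda = \{0,1,\dots,n\}$, that $\dim \mcP\mcQ\mcP_n(\beta) = R_{2n}$, and that $\dim W(\lambda) = R_{n,\lambda}$---Equation~\eqref{eq:dim_formula} applied to $\mcA = \mcP\mcQ\mcP_n(\beta)$ gives
\[
R_{2n} = \dim \mcP\mcQ\mcP_n(\beta) = \sum_{\lambda \in \Lambda} \bigl( \dim W(\lambda) \bigr)^2 = \sum_{\lambda=0}^n R_{n,\lambda}^2,
\]
with no further combinatorial input. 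The real content therefore lies in supplying those three ingredients, which I outline below.

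First I would establish cellularity and the shape of $\Lambda$. Since $\mcP\mcQ\mcP_n(\beta)$ is a planar subalgebra whose diagrams have no singletons---a condition invariant under the vertical reflection $\iota$---Theorem~\ref{thm:planar_cellular} applies verbatim, producing a cell basis from the natural diagram basis graded by the number of defects. A planar diagram with all blocks of size at least $2$ admits between $0$ and $n$ propagating blocks, giving $\Lambda = \{0,1,\dots,n\}$, while the total dimension $\dim \mcP\mcQ\mcP_n(\beta) = R_{2n}$ is the count recorded immediately above the theorem statement.

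The substantive step is the identification $\dim W(\lambda) = R_{n,\lambda}$. The cell module $W(\lambda)$ has a basis of half-diagrams: noncrossing set partitions of $[n]'$ with every block of size at least $2$ together with a distinguished set of $\lambda$ non-nested defect blocks. I would read each half-diagram from left to right and encode it as a lattice path ending at height $\lambda$ in the manner of the Motzkin bijection in~\cite{BH14}, with arc openings giving up-steps and arc closings giving down-steps. The absence of singletons removes all interior horizontal steps, and the non-nesting condition on defects forces the path to avoid horizontal steps at height $0$, which is precisely the combinatorial signature of the Riordan triangle. One then verifies that the three local moves at the rightmost node reproduce the recursion~\eqref{eq:riordan_triangle_recursion}, including the asymmetric boundary case $R_{n,0} = R_{n-1,1}$.

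The main obstacle is this bijective identification, and in particular pinning down the $\lambda = 0$ boundary behavior: a zero-defect half-diagram must cap the block containing $n'$, eliminating one candidate ground-level step and explaining why the $\lambda = 0$ recursion differs from the Motzkin analogue. Once $\dim W(\lambda) = R_{n,\lambda}$ and $\dim \mcP\mcQ\mcP_n(\beta) = R_{2n}$ are secured, the sum-of-squares identity is immediate from~\eqref{eq:dim_formula} and requires no independent argument.
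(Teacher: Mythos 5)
Your reduction of the sum-of-squares identity to Equation~\eqref{eq:dim_formula} matches the paper, but both of your supporting steps have genuine gaps. First, cellularity: you claim Theorem~\ref{thm:planar_cellular} applies \emph{verbatim}, with the natural diagram basis as cell basis and $\iota$-invariance. But $\mcP\mcQ\mcP_n(\beta)$ is \emph{not} spanned by planar no-singleton diagrams: by \cite[Lemma~2.2]{DO14} its basis elements are alternating sums over refinements inside $\mcP\mcP_n(\beta-1)$, and these refinements involve diagrams with singletons and crossings. Indeed, the span of the planar no-singleton diagrams is not even closed under multiplication: for $n=3$, take $\rho = \{\{1,2\},\{3,3'\},\{1',2'\}\}$ and $\sigma = \{\{1,1',2'\},\{2,3,3'\}\}$; then $\rho \cdot \sigma = \{\{1\},\{2,3,3'\},\{1',2'\}\}$ contains a singleton. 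So there is no ``planar algebra'' in the sense of Theorem~\ref{thm:planar_cellular} here, and moreover the correct anti-involution is the map $\widetilde{\iota}$ of Proposition~\ref{prop:cellular_triangle_basis}, not $\iota$ --- the example following Theorem~\ref{thm:quasi_cellular} exhibits a quasi-partition basis element fixed by $\widetilde{\iota}$ but not by $\iota$. This is why the paper's proof must combine Theorem~\ref{thm:quasi_cellular} (triangularity of the refinement sums, iterated inflation) with Theorem~\ref{thm:planar_cellular}; an appeal to the latter alone does not establish cellularity.

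Second, the dimension count: your path encoding mis-models the half diagrams. Blocks in $\mcP\mcQ\mcP_n(\beta)$ can have \emph{any} size at least $2$ (e.g.\ $\{1,2,3\}$ is a legal block), so ``arc openings and closings'' do not exhaust the vertices: the middle elements of blocks of size $\geq 3$, and the non-initial elements of defect blocks, still need a step assigned to them, naturally a horizontal step at positive height. Your claim that ``the absence of singletons removes all interior horizontal steps'' is therefore false for this algebra --- it is the Motzkin picture, where blocks have size at most $2$ --- and the no-level-step-at-height-$0$ condition comes from every level step lying inside a currently open block, not from non-nesting of defects. The bijection can be repaired, but that repair is exactly the missing content. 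The paper sidesteps paths entirely and runs your ``local moves at the rightmost node'' directly on the half diagrams: if the block $\rho_1 \ni n$ has $\abs{\rho_1} > 1$, delete $n$ and declare $\rho_1 \setminus \{n\}$ a defect (the defect count stays at $\lambda$ or rises to $\lambda+1$ according to whether $\rho_1$ was a defect), while if $\rho_1 = \{n\}$ it must itself be a defect and is deleted ($\lambda - 1$); this yields the recursion~\eqref{eq:riordan_triangle_recursion} including the boundary case $R_{n,0} = R_{n-1,1}$. Had you applied that case analysis to the half diagrams rather than to lattice paths, the counting half of your proposal would essentially coincide with the paper's proof.
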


\begin{proof}
From Theorem~\ref{thm:quasi_cellular} and Theorem~\ref{thm:planar_cellular}, the planar quasi-partition algebra is cellular.
We note that any singleton in a half diagram must be a propagating block.
We can also move all of the propagating blocks to the front and have size $1$.
This proves the indexing set $\Lambda = \{0, 1, \dotsc, n\}$.
The last claim is simply Equation~\eqref{eq:dim_formula}.
Thus, we only need to prove the dimension of the cell modules.

We show that the half diagrams satisfy the same recurrence relation as the Riordan triangle numbers.
Let $\rho$ be a half diagram, and suppose $n \in \rho_1$.
If $\abs{\rho_1} > 1$, then construct a new half diagram by having $\rho_1 \setminus \{n\}$ be a defect block and keeping the other parts the same.
Thus the number of defects has increased by one if and only if $\rho_1$ is a defect (otherwise the number of defects does not change).
This is bijective as we simply take the rightmost defect block in the half diagram of $[n-1]$ to reconstruct $\rho$.
Lastly, if $\abs{\rho_1} = 1$, then it necessarily must be a defect.
We simply remove $\rho_1$ from the half diagram to form the new half diagram.
This decreases the number of defects and is clearly bijective.
\end{proof}

For some other interesting appearances of the Riordan triangle numbers, see~\cite{KLO17,OS19} (there is an unfortunate misprint in the definition of the Riordan triangle numbers in~\cite{OS19}).

It is a straightforward exercise to show that Theorem~\ref{thm:quasi_partition_cellular} also holds for the tangle algebra.
Hence, Conjecture~\ref{conj:tangle_pqp} is true when both $\mcT_n(\gamma)$ and $\mcP\mcQ\mcP_n(\beta)$ are semisimple as they have the same dimension and all simple modules (which are the cell modules) have the same dimensions.
Therefore, we have an abstract isomorphism by the Artin--Wedderburn theorem.
The author thanks Hyohe Miyachi for noting this.
It would be good to have an explicit (combinatorial) isomorphism.

It would also be interesting to show they are both satisfy the same Schur--Weyl duality directly (and could lead to such an isomorphism).
To this, we believe there is a minor misprint in~\cite{BE18}, as the module should be $V(2)$ instead of the adjoint representation.
In particular, the dimension of the irreducible module $V(\lambda)$, which equals the cell module $W(\lambda)$ in this case by semisimplicity, is the multiplicity of $V(2\lambda)$ in the decomposition of $V(2)^{\otimes n}$ (as $U(\fsl_2)$-modules).
This can be seen by counting the multiplicities inductively on $n$, where the Pieri rule yields the Riordian triangle number recursion relation~\eqref{eq:riordan_triangle_recursion}.
Alternatively, if it was built using the adjoint representation, then~\cite{BH14} implies that the tangle algebra is isomorphic to the Motzkin algebra, but they (and their cell modules) have different dimensions.

\begin{conj}
The tangle algebra $\mcT_n(\gamma)$ and the quasi-partition algebra $\mcP\mcQ\mcP_n(\beta)$ satisfy Schur--Weyl duality with $U_q(\fsl_2)$ for the module $V(2)^{\otimes n}$ for some $\gamma$ and $\beta$ when $\field$ is a field of characteristic $0$.
\end{conj}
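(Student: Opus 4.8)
The plan is to prove the conjecture for a generic quantum parameter by realizing each algebra as the full centralizer of $U_q(\fsl_2)$ acting on $V(2)^{\otimes n}$ and then matching dimensions against Theorem~\ref{thm:quasi_partition_cellular}. Throughout I work over a characteristic-$0$ field $\field$ with $q$ not a root of unity, so that the category of finite-dimensional $U_q(\fsl_2)$-modules is semisimple and $V(2)$ is the three-dimensional simple module. First I would decompose $V(2)^{\otimes n} \iso \bigoplus_{\lambda=0}^{n} V(2\lambda)^{\oplus m_{n,\lambda}}$ into isotypic components. The quantum Clebsch--Gordan rule gives $V(2) \otimes V(2\lambda) \iso V(2\lambda+2) \oplus V(2\lambda) \oplus V(2\lambda-2)$ for $\lambda \geq 1$, while $V(2) \otimes V(0) \iso V(2)$. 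Reading off multiplicities yields $m_{n,\lambda} = m_{n-1,\lambda-1} + m_{n-1,\lambda} + m_{n-1,\lambda+1}$ for $\lambda > 0$ and $m_{n,0} = m_{n-1,1}$, with $m_{n,n} = 1$; this is exactly the Riordan triangle recursion~\eqref{eq:riordan_triangle_recursion}, so $m_{n,\lambda} = R_{n,\lambda} = \dim W(\lambda)$ by Theorem~\ref{thm:quasi_partition_cellular}.

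Next I would compute the centralizer dimension. By the double commutant theorem and semisimplicity, $\dim \End_{U_q(\fsl_2)} V(2)^{\otimes n} = \sum_{\lambda=0}^{n} m_{n,\lambda}^2 = \sum_{\lambda=0}^{n} R_{n,\lambda}^2 = R_{2n}$, where the last equality is the final identity of Theorem~\ref{thm:quasi_partition_cellular}. Since $\dim \mcP\mcQ\mcP_n(\beta) = \dim \mcT_n(\gamma) = R_{2n}$, any algebra homomorphism from either algebra into $\End_{U_q(\fsl_2)} V(2)^{\otimes n}$ that is injective must also be surjective onto the centralizer, hence an isomorphism; this is precisely the asserted Schur--Weyl duality. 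The problem therefore reduces to constructing such a homomorphism and proving it is faithful.

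For the construction I would proceed diagrammatically: interpret caps and cups as the $U_q(\fsl_2)$-equivariant evaluation $V(2) \otimes V(2) \to V(0)$ and coevaluation $V(0) \to V(2) \otimes V(2)$, and propagating strands as identities, so that each planar quasi-partition (respectively tangle) diagram acts as the corresponding composite intertwiner on $V(2)^{\otimes n}$. For $\mcT_n(\gamma)$ this is the $V(2)$ version of the action in~\cite[Thm.~5.7]{BE18} (with the correction, noted above, that the relevant module is $V(2)$ rather than the adjoint representation), and $\gamma$ is fixed as the scalar by which a closed interior loop acts, namely the quantum dimension of $V(2)$. For $\mcP\mcQ\mcP_n(\beta)$ I would either transport this action through the isomorphism of Conjecture~\ref{conj:tangle_pqp} or, to keep the argument independent of that conjecture, define the action directly and identify each cell module $W(\lambda)$ with the multiplicity space $\operatorname{Hom}_{U_q(\fsl_2)}\bigl(V(2\lambda), V(2)^{\otimes n}\bigr)$, whose dimension is $m_{n,\lambda} = \dim W(\lambda)$ by the first step.

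The hard part will be faithfulness: one must show that the relations holding among these intertwiners in $\End_{U_q(\fsl_2)} V(2)^{\otimes n}$ are \emph{exactly} those defining the algebra, with no extra relations collapsing the image. For the Temperley--Lieb and Brauer cases this is classical, but $V(2)$ carries the richer $\fsl_2$-spider (type $\SO(3)$ web) structure, whose cup-cap and trivalent morphisms satisfy Jones--Wenzl-type relations that must be reconciled with the planar quasi-partition relations; verifying that none of these is lost under the map is the crux. A clean route is to establish semisimplicity of the algebra for the chosen generic parameter, via non-degeneracy of the cellular bilinear forms $\Phi_\lambda$ associated to the cell structure of Theorem~\ref{thm:quasi_partition_cellular}, and then observe that the map carries each simple cell module $W(\lambda)$ onto the multiplicity space of equal dimension, so it is injective and hence faithful. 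The remaining technical points are pinning down the precise dependence of $\beta$ and $\gamma$ on $q$ --- calibrated by the explicit $\mcP\mcQ\mcP_2(\beta) \iso \mcT_2(\gamma)$ isomorphism computed just above --- and the boundary case $\lambda = 0$, where the truncation $V(2) \otimes V(0) \iso V(2)$ matches the initial value $R_{1,0} = 0$. The root-of-unity and $\beta = 0$ degenerations lie outside the stated hypotheses and would require separate treatment.
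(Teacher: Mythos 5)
This statement is a conjecture in the paper; there is no proof to compare against, only the paper's supporting evidence, namely that the multiplicity of $V(2\lambda)$ in $V(2)^{\otimes n}$ satisfies the Riordan triangle recursion~\eqref{eq:riordan_triangle_recursion} and hence equals $\dim W(\lambda) = R_{n,\lambda}$ from Theorem~\ref{thm:quasi_partition_cellular}. Your first two paragraphs reproduce exactly this evidence (the Clebsch--Gordan computation of $m_{n,\lambda}$, the identity $\sum_\lambda R_{n,\lambda}^2 = R_{2n}$, and the consequent dimension match with the centralizer), which is correct but is not new content: it is the same heuristic the paper records just before stating the conjecture. What you add beyond the paper is a skeleton for upgrading the dimension count to a duality statement, and that skeleton has genuine gaps that you yourself flag but do not close.

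Concretely, two steps are missing and neither is routine. First, the homomorphism $\mcP\mcQ\mcP_n(\beta) \to \End_{U_q(\fsl_2)} V(2)^{\otimes n}$ is never actually constructed: your diagrammatic recipe (caps, cups, identity strands) only assigns intertwiners to blocks of size $2$, whereas planar quasi-partition diagrams have blocks of arbitrary size greater than $1$, and moreover the basis elements of $\mcP\mcQ\mcP_n(\beta)$ are not single diagrams but alternating sums of diagrams inside $\mcP\mcP_n(\beta-1)$ (this is how the algebra is defined via~\cite[Lemma~2.2]{DO14}); so one must specify an intertwiner for every block shape, verify the assignment respects the alternating-sum basis, and verify it is multiplicative --- this is where the $\beta$--$q$ dependence is actually determined, and nothing in the proposal pins it down beyond the $n=2$ calibration, which relies on the still-open Conjecture~\ref{conj:tangle_pqp} in general. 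Second, the faithfulness argument is circular as stated: you propose to deduce injectivity from semisimplicity via non-degeneracy of the forms $\Phi_\lambda$, but non-degeneracy must be established at the \emph{specific} value $\beta = \beta(q)$ forced by the first step, not at a generic parameter, and identifying each cell module with a multiplicity space presupposes the map is already well defined and nonzero on that cell module. Until these two steps are carried out, the proposal remains a plausible strategy --- essentially the one the paper itself suggests --- rather than a proof.
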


Given that the half (integer) quasi-partition algebra exists, we can also do the same for the planar version.
This would lead to an algebra $\mcP\mcQ\mcP_{n-1/2}(\beta)$ whose dimension is $R_{2n-1}$.

\section{Alternative perspectives and generalizations}

In this section, we discuss just a few generalizations of the partition algebra, although there are indubitably many more than we discuss here.
We also give an alternative perspective using tensor categories, which are often strongly linked to combinatorics (which can be seen in, \textit{e.g.},~\cite{BMT21}).

\subsection{Blob algebra}
\label{sec:blob_algebra}

The blob algebra $\mcB_n(\beta, \gamma, \delta)$, for parameters $\beta,\gamma,\delta \in \field$, defined by Martin and Saleur~\cite{MS94} can be considered as the type $B$ analog the Temperley--Lieb algebra, where we can put idempotent blobs on strands that can escape out the left boundary of the diagram.
Alternative, if we unfold the diagram (along the right side), then these are the strands that are not nested in the noncrossing perfect matching.
Multiplication is given as for the Temperley--Lieb algebra except loops with a blob contribute $\delta$ instead of $\beta$ and the blobs are idempotent (which is resolved before loops are removed): when we combine two blobs together, we have a blob remaining and multiply by a factor of $\gamma$.
The blob algebra has also been well-studied (see, \textit{e.g.},~\cite{ILZ18} and references therein) and is known to be cellular~\cite{GL03} (along with some generalizations, such as in~\cite{LRH20} using the version of Martin and Woodcock~\cite{MW00}).

The classical blob algebra also fits into our framework, but now
\[
\Lambda = \{n-2k, \overline{n-2k} \mid k \in \langle n \rangle\}
\]
under the ordering $\overline{1} < 1 < \overline{2} <  2 < \cdots$.
For $\lambda \in \Lambda$ with $\lambda = k$ or $\overline{k}$, the number of defects is equal to $k$.
Furthermore, the barred values indicate that the leftmost defect has a blob on it and unbarred entries have no blobs on the defects.
Otherwise, $M(\lambda)$ is the expected set of blobbed half diagrams.
We can see this is a cellular algebra since blobs are idempotent, and so we cannot remove a blob from a strand/defect once it has been added.

\subsection{Other Schur--Weyl duality algebras}

We briefly remark on some other variations of the partition algebra that have appeared coming from a Schur--Weyl duality.

The first is the rook partition algebra $\mcR\mcP_n(\beta)$ introduced by Grood~\cite{Grood06} that comes from Schur--Weyl duality involving the $\GL_m(\CC)$ module $V = V(1) \oplus V(0)$ restricted to the corresponding $\Sym_m$ action.
This version involves the usual diagrams except we color singletons by two different colors.
Alternatively, we can color the nodes by two different colors, call them red and green, and if a node is colored red, then it must be a singleton.
Since the only difference is coloring singletons, the proof that it is a cellular algebra is the same as for the classical partition algebra.

\begin{prop}
The rook partition algebra is a cellular algebra with the same cell datum as $\mcP_n(\beta)$ except $M(\lambda)$ consists of all half diagrams with singletons colored one of two colors.
\end{prop}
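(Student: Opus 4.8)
The plan is to reproduce the proof of Theorem~\ref{thm:partition_cellular} (\textit{i.e.}, Xi's argument~\cite{Xi99}, with the correction of~\cite{GP18}) essentially verbatim, the one new ingredient being the observation that the two-coloring of singletons is orthogonal to the propagating structure that governs the cellular filtration. The crucial elementary point is that a singleton block lies entirely in $[n]$ or entirely in $[n]'$ and hence is \emph{never} propagating; consequently the color data decorates only non-propagating parts of a diagram and plays no role in determining the middle permutation in the decomposition~\eqref{eq:partition_decomposition}.

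First I would record the colored analog of~\eqref{eq:partition_decomposition}, namely
\[
\mcR\mcP_n(\beta) \iso \bigoplus_{k=0}^n \widehat{V}'_k \otimes \field[\Sym_k] \otimes \widehat{V}_k
\]
as $\field$-modules, where $\widehat{V}_k$ (resp.~$\widehat{V}'_k$) is the free $\field$-module spanned by half diagrams of $[n]$ (resp.~$[n]'$) with $k$ defects in which each singleton additionally carries one of the two colors. Since every propagating block meets both sides of the diagram and so has at least one element on each side, the $\field[\Sym_k]$-factor encoding the crossings of the propagating blocks is unchanged, and the colors merely ride along on the singleton blocks making up the remainder of $\widehat{V}_k$ and $\widehat{V}'_k$.

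Next I would note that the anti-involution $\iota$ (vertical reflection) sends a colored singleton to a colored singleton of the same color, so the natural colored-diagram basis is $\iota$-invariant. It then remains to verify the triangularity condition Definition~\ref{defn:cellular_algebra}(\ref{cell_basis_triangular}). This is the step requiring care: in a product of diagrams a propagating block may be broken and split off as a singleton, and interior singleton components may be created or removed. The point is that any such event strictly decreases the number of propagating blocks and therefore lands in $\mcA^{<\lambda}$; within a fixed $\lambda$ the action is computed exactly as in Section~\ref{sec:crt}, inducing a Specht module $S^{\lambda}$ of $\field[\Sym_k]$ via the same Garnir straightening relations, with the color labels on the surviving singletons transforming by a unitriangular change of basis. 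Hence condition~(\ref{cell_basis_triangular}) holds.

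I expect the only genuine obstacle to lie in the bookkeeping of the previous step: confirming that Grood's color-tracking rule for composing diagrams~\cite{Grood06} is consistent with the claim that colors affect neither the propagating count nor the induced permutation. Granting this, the cell datum is identical to that of Theorem~\ref{thm:partition_cellular} except that $M(\lambda)$ is enlarged to pairs $(\rho, T)$ in which the singleton blocks of $\rho$ are two-colored, as claimed; the poset $\Lambda$, the anti-involution $\iota$, and the Specht-module description~\eqref{eq:specht_decomposition} of the cell modules are all unchanged.
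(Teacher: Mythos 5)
Your proposal is correct and follows essentially the same route as the paper, which justifies the proposition in one line by observing that ``since the only difference is coloring singletons, the proof that it is a cellular algebra is the same as for the classical partition algebra.'' Your elaboration---that singletons can never be propagating, so the colors decorate only the non-propagating data and leave the decomposition~\eqref{eq:partition_decomposition}, the $\field[\Sym_k]$-factor, the $\iota$-invariance, and the triangularity argument untouched---is exactly the content the paper leaves implicit.
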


\begin{cor}
Let $\lambda \in \Lambda$ be a partition of $k$.
Then we have
\[
\dim W(\lambda) = f_{\lambda} \sum_{i=0}^{n-k} \binom{n}{i} \sum_{j=k}^{n-i} \binom{j}{k} \stirling{n-i}{j}.
\]
\end{cor}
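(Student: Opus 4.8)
The plan is to reduce the computation to a count of half diagrams exactly as in the treatment of the ordinary partition algebra in Section~\ref{sec:crt}. By the preceding proposition, $\mcR\mcP_n(\beta)$ is cellular with the cell datum of $\mcP_n(\beta)$ except that $M(\lambda)$ now consists of rook half diagrams, i.e.\ half diagrams whose singletons each carry one of two colors. The decomposition~\eqref{eq:specht_decomposition} carries over verbatim, giving $W(\lambda) \iso V'_{n,k} \otimes_{\field[\Sym_k]} S^{\lambda}$ with $k = \abs{\lambda}$, so that $\dim W(\lambda) = f_{\lambda} \cdot \abs{M(\lambda)}$, and it remains to count the rook half diagrams with exactly $k$ defects.

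First I would separate the two colors. Call the two singleton colors \emph{rook} and \emph{ordinary}, where a rook-colored node is precisely an isolated vertex coming from the $V(0)$ summand; such a node is forced to be a non-propagating singleton and hence can never be a defect. Thus every rook half diagram is obtained uniquely by choosing a subset $R \subseteq [n]$ of rook-colored nodes and then placing an ordinary partition half diagram with $k$ defects on the complementary set $[n] \setminus R$, its singleton blocks being the ordinary-colored singletons. Writing $i = \abs{R}$, there are $\binom{n}{i}$ choices for $R$, and we need $n - i \geq k$ to accommodate the $k$ defects, so $0 \leq i \leq n-k$. On the remaining $n-i$ nodes the number of ordinary half diagrams with $k$ defects is, by the second dimension formula of Section~\ref{sec:crt} with $n$ replaced by $n-i$, equal to $\sum_{j=k}^{n-i} \binom{j}{k} \stirling{n-i}{j}$. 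Summing over $i$ and multiplying by $f_{\lambda}$ yields the claimed formula.

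The only step that is more than bookkeeping is the clean separation asserted above: that a rook half diagram factors uniquely as a set of rook-colored isolated nodes together with an ordinary half diagram on the rest, with all defects lying among the ordinary-colored blocks. This is essentially definitional for Grood's model, since the rook color corresponds to the trivial summand $V(0)$ and so cannot contribute a propagating block; once it is granted, the remaining steps are exactly the counting already carried out for $\mcP_n(\beta)$, now applied on the $n-i$ ordinary nodes. I expect no further obstacle, as the factor $\binom{n}{i}$ simply records the placement of the rook-colored singletons and the inner double sum is the established half-diagram count.
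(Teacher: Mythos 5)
Your proposal is correct and follows essentially the same route as the paper: the paper's proof is precisely ``choose the $i$ red (rook-colored) nodes, then apply the usual partition-algebra half-diagram count $\sum_{j=k}^{n-i}\binom{j}{k}\stirling{n-i}{j}$ on the remaining $n-i$ nodes, and multiply by $f_{\lambda}$.'' Your additional justification that rook-colored singletons can never be defects (so that $i \leq n-k$ and the inner count is unaffected) is exactly the implicit assumption the paper relies on, just made explicit.
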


\begin{proof}
This comes from choosing $i$ nodes to first color red, and the rest is just the usual partition algebra formula.
\end{proof}

Grood also showed~\cite{Grood06} that $\dim \mcR\mcP_n(\beta) = B_{2n+1}$ through an indirect combinatorial argument, but we can give a more straightforward argument.
The red colored nodes are simply one special block, which how we mark it as special is say it contains an extra node $\{0\}$.
This perspective gives us an alternative formula for the cell module dimensions as
\[
\dim W(\lambda) = f_{\lambda} \sum_{j=k}^n \binom{n}{j} \stirling{j}{k} B_{n-j+1}.
\]

Ly in~\cite{Ly19} studied a Schur--Weyl type duality using the supercharacter theory of $U_m(\mathbb{F}_q)$ of upper triangular matrices from~\cite{Thiem10}, which is used to approximate its ``wild'' type representation theory.
However, it is not clear how to apply the techniques of this paper to that construction.

\begin{problem}
Determine if the centralizer algebra in~\cite{Ly19} is cellular.
\end{problem}

Lastly, there is the \defn{walled Brauer algebra} $\mcB_{n,k}(\beta)$ coming from Schur--Weyl duality with $\GL_m(\CC)$ on $\natrepr^{\otimes n} \otimes (\natrepr^*)^{\otimes k}$ that was initially studied in~\cite{Turaev89,Koike89,BCHLLS94}.
This is a subalgebra of $\mcB_{n+k}(\beta)$, where we do not have any propagating strands $\{a, \overline{b}\}$ or $\{b, \overline{a}\}$ for $1 \leq a < n + \frac{1}{2} < b \leq n+k$.
That is, there is a ``wall'' between positions $k$ and $k+1$ that does not allow propagating blocks to pass it (although caps and cups are fine).
The walled Brauer algebra was shown to be cellular in~\cite{CDVDM08}, where they also classified when it is semisimple.
This could also be seen from Proposition~\ref{prop:subcellular}, where we are restricting $\Sym_{n+k}$ that dictates the decomposition~\eqref{eq:partition_decomposition} to the Young subgroup $\Sym_n \times \Sym_k$.
Hence, $\Lambda$ now given by a pair of partitions $(\lambda, \mu)$ of sizes at most $k$ and $n$, respectively (\textit{cf.}~\cite{Halverson96}).
Generalizations have also been considered, such as~\cite{RS15,Sartori14}.

\subsection{Diagram algebras as categories}

Another perspective on the diagram algebras is to view them as (graded) monoidal categories, where the algebra is isomorphic to the Grothendieck ring of the category.
This is known as categorification, and it can bring out new properties both for the category and Grothendieck ring.

One example the Temperley--Lieb category $\mathsf{TL}$, which was first introduced by Graham and Lehrer~\cite{GL98}, with objects $\ZZ_{>0}$ and morphisms $n \to m$ corresponding to Temperley--Lieb diagrams from $[n] \to [m']$ (with $m$ not necessarily equal to $n$).
The composition of diagrams is the composition of morphisms (as per our multiplication convention).
If we restrict to the subcategory corresponding to the object $n$, then the Grothendieck ring is isomorphic to $\mcT\mcL_n(\beta)$.
The category $\mathsf{TL}$ and its representations have been studied with expected applications to conformal field theory; see, \textit{e.g.},~\cite{BSA18II} and references therein.
In~\cite{KMY19}, an interpolation category was constructed between $\mathsf{TL}$ and the corresponding categorification of the Brauer algebra.

A different categorification of $\mcT\mcL_n(1)$ was given by Bernstein, Frenkel, and Khovanov~\cite{BFK99}, although it is just the Karubi envelope of the additive closure the previous construction.
Their construction was later extended to $\mcT\mcL_n(\beta)$ and other types by Stroppel~\cite{Stroppel05}.
Recently, a two parameter analog of $\mathsf{TL}$ was given in~\cite{KS21} as a method to categorify Chebyshev polynomials of the second kind (yet another appearance of these polynomials).

A related but different construction for the invariant spaces of the $U_q(\fsl_2)$-action on $\natrepr^{\otimes n}$ and generalizations was given diagrammatically by Kuperberg webs and spiders~\cite{Kuperberg96}.
These have seen some attention, such as in~\cite{Tymoczko12,Scherer21} with many open questions remaining, such as a basis for $\fsl_m$ for $m \geq 4$.

\subsection{Knots, braids, and ties}

One of the important results from the Temperley--Lieb algebra has been the construction of invariants of knots and links.
By realizing $\mcT\mcL_n\bigl((q+q^{-1})\bigr)$ as a quotient of the braid group algebra and using Kauffman's bracket polynomial~\cite{Kauffman90}, we are naturally led to a Markov trace of connecting the top of a diagram to its bottom.
This is the construction of the famous Jones polynomial~\cite{Jones85}.
Some surveys on the Jones polynomial are~\cite{Abramsky08,FKP14,KL22}.

The Ariki--Koike algebra~\cite{AK94} is a deformation of the group algebra $\field [G(r,1,m)]$ analogous to the usual (Iwahori--)Hecke algebra deformation, but there is another deformation known as the Yokonuma--Hecke algebra $Y_{m,r}(q)$~\cite{Yokonuma67} that originally arose in the context of Chevalley groups.
In particular, $Y_{m,r}(q)$ is the centralizer algebra of the permutation representation of $\GL_m(\mathbb{F}_{p^m})$, where $q^2 = p^m$ for a prime $p$, with respect to a maximal unipotent subgroup.
The Yokonuma--Hecke algebra $Y_{m,r}(q)$ is more naturally attuned to knot theory as it is the quotient of the framed braid group $\ZZ \wr \mathbf{B}_m$ and the modular framed braid group $\ZZ_r \wr \mathbf{B}_m$, with generalizations to other Coxeter systems given by Marin~\cite{Marin18}.
From the braid group construction, an invariant on framed knots was constructed~\cite{JL07,JL07II,JL13} by using the Markov trace given by Juyumaya~\cite{Juyumaya04}.
It also can be extended to define an invariant on classical and singular knots~\cite{JL09,JL11}.
Subsequent results have made further connections between $Y_{m,r}(q)$ and knot theory, such as to the HOMFLYPT polynomial~\cite{PW18}.

Based upon a new presentation for $Y_{m,r}(q)$ introduced by Juyumaya~\cite{Juyumaya98}, a new algebra coined the braids and ties algebra, or bt-algebra, $\mcE_m(q)$ was introduced by Aicardi and Juyumaya~\cite{AJ00,Juyumaya99}.
A basis for $\mcE_m(q)$ was given by Ryom-Hansen~\cite{RH11}, showing its dimension is $m! B_m$.
However, Banjo~\cite{Banjo13} has proven that $\mathcal{E}_m(1)$ is isomorphic to the small ramified partition algebra from~\cite{Martin11}, which allows us to construct a diagrammatic basis for $\mcE_m(q)$.
A Markov trace on $\mcE_m(q)$ was constructed in~\cite{AJ16}, allowing it to be used to construct knot invariants.
There is also a two parameter version of the bt-algebra~\cite{AJ20}, which arose from extending the Markov trace to two parameters~\cite{CJKL20} to construct knot invariants such as the HOMFLYPT polynomial.
The representation theory of $\mcE_m(q)$ has been studied in papers such as~\cite{AJ00,Banjo13,RH11}.
Both $Y_{m,d}(q)$ and $\mcE_m(q)$ were shown to be cellular in~\cite{ERH18} with a combinatorial description than can be translated into the diagrammatic language.

Another way to construct $\mcE_m(q)$ is using the monoid of tied braids, coming from tied links introduced in~\cite{AJ16II} with analogous polynomial invariants~\cite{AJ18,AJ21}.
Therefore, we have a natural description of $\mcE_m(q)$ in terms of diagrams.
This diagrammatic description was later extended to the Brauer algebra and other submonoids of the partition monoid~\cite{AAJ23}, which are referred to as ramified monoids and includes other Coxeter/Artin-type monoids.
We call the corresponding monoid algebras ramified algebras.
The inverse symmetric monoid and planar monoid cases were examined in detail in the recent work~\cite{AAJ22}.

\begin{problem}
Determine if the ramified algebras are cellular.
\end{problem}

\section{Wreath products}
\label{sec:wreath}

Recall that $G(r,1,m) \iso \ZZ_r \wr \Sym_m$, whose group algebra has a cellular basis~\cite{GL96} roughly speaking by taking ``product'' of the cellular basis of $\field [\ZZ_r]$ and $\field [\Sym_m]$.
The cellularity of $\field [\ZZ_r]$ when $\zeta_r \in \field$ (alternatively $\field$ splits $x^r - 1$) follows from a special case of the the Ariki--Koike algebras from~\cite{GL96}, which is done over $\ZZ[q,u_1,\dotsc,u_r]$ but the group algebra is under the specialization $q = 1$ and $u_k = \zeta_r^k$.
A more direct construction was done in, \textit{e.g.},~\cite[Sec.~4]{RX04} (see also~\cite[Ex.~4.14]{ZC06}).
There is also the cyclotomic blob (resp.\ Temperley--Lieb) algebra introduced in~\cite{ZC06} (resp.~\cite{RX04}), which can be constructed as the blob  (resp.\ Temperley--Lieb) algebra with each strand carrying a copy of $\ZZ_r$, where it was shown to be cellular (again, assuming $\field$ splits $x^r - 1$).
Furthermore, the cellular basis of the cyclotomic blob/Termperley--Lieb algebra has the same ``product'' of cellular bases structure.
In this section, we generalize this construction as a method to produce new cellular algebras by taking a wreath product of an arbitrary cellular algebra with certain subalgebras of the partition algebra.

Let $\mcA$ be any finite dimensional $\field$-algebra with basis $B$.
Let $\mcS$ be any subalgebra of the partition algebra such that blocks have size at most $2$.
Define the wreath product $\mcA \wr \mcS$ as the $\field$-span of diagrams of $\mcS$ with an element of $B$ attached to each block.
The multiplication is the natural concatenation of diagrams: where we take the product in $\mcS$, then multiply the elements of $\mcA$ along any strand and expanding this as a linear combination of basis elements in the natural way before removing cycles.
This has a natural involution $\iota_{\wr}$ induced from the involution $\iota$ of $\mcS$.
We call this the \defn{cellular wreath product} of the base $\mcS$ by the algebra $\mcA$ and denote this by $\mcA \wr \mcS$.

\begin{thm}
\label{thm:wreath_product}
Let $\mcA$ be a cellular algebra with cell datum $(\widetilde{\Lambda}, \widetilde{\iota}, \widetilde{M}, \widetilde{C})$.
Let $\mcS$ be any subalgebra of the partition algebra such that blocks have size at most $2$ with cell datum $(\Lambda, \iota, M, C)$.
Then the wreath product $\mcA \wr \mcS$ is a cellular algebra.
If $\mcS = \mcR\mcB_n(\beta, \gamma)$ (resp.~$\mcM_n(\beta, \gamma)$), then the cell datum $(\Lambda_{\wr}, \iota_{\wr}, M_{\wr}, C_{\wr})$ is given by
\begin{itemize}
\item $\Lambda_{\wr} = \{(\lambda, L) \mid \lambda \in \Lambda, L \in \langle \widetilde{\Lambda}, D(\lambda) \rangle \}$, where $D(\lambda)$ is the number of defects corresponding to $\lambda$ and $\langle \widetilde{\Lambda}, \ell \rangle$ is the number of multisets (resp.\ sequences) of size $\ell$ with elements in $\widetilde{\Lambda}$, under the natural lexicographic order;
\item $\iota_{\wr}$ is the natural involution induced from $\iota$ on $\mcS$;
\item $M_{\wr}(\lambda, L) = M(\lambda) \times \widetilde{M}(L) \times (\bigsqcup \widetilde{M})^{N(\lambda)}$, where $N(\lambda)$ equals the number of non-defect blocks corresponding to~$\lambda$, $\widetilde{M}(L) := \prod_{i=1}^{D(\lambda)} \widetilde{M}(L_i)$, and $\bigsqcup \widetilde{M} := \bigsqcup_{\lambda \in \widetilde{\Lambda}} \widetilde{M}(\lambda)$;
\item $C_{\wr}$ is formed by writing the cellular basis element $C_{ST}^{\lambda}$ in terms of the natural diagram basis and attaching the corresponding element in $C_{\widetilde{S}_i\widetilde{T}_i}^{\widetilde{\lambda}_i}$ to the $i$-th strand for all such $i$ determined by (inverse) RSK.
\end{itemize}
Otherwise it is given by the appropriate restriction.
\end{thm}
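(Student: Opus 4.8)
The plan is to verify the two axioms of Definition~\ref{defn:cellular_algebra} directly, building on the half-diagram decomposition of $\mcS$. Since $\mcS$ is a subalgebra of $\mcP_n(\beta)$ with blocks of size at most $2$, every basis diagram decomposes, via the analog of~\eqref{eq:partition_decomposition}, into a lower half diagram, a middle permutation $\sigma$ of the defects, and an upper half diagram; for $\mcS = \mcM_n(\beta,\gamma)$ this middle permutation is always trivial by planarity, whereas for $\mcS = \mcR\mcB_n(\beta,\gamma)$ it is an arbitrary permutation of the $D(\lambda)$ defects. A basis element of $\mcA \wr \mcS$ additionally carries an element of $B$ on each block. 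I would define $C_{\wr}^{(\lambda,L)}$ by taking the cellular basis element $C^{\lambda}_{ST}$ of $\mcS$, expanding it in the natural diagram basis, and decorating the $i$-th defect strand with the cellular basis element $\widetilde{C}^{L_i}_{\widetilde{S}_i \widetilde{T}_i}$ of $\mcA$, while decorating each non-defect block with an arbitrary element of the chosen basis $\bigsqcup\widetilde{M}$. The index $L \in \langle\widetilde{\Lambda}, D(\lambda)\rangle$ records the tuple of cell labels on the defects (a multiset for $\mcR\mcB_n$, since the middle permutation may reorder defects, and a sequence for $\mcM_n$, since planarity forbids this), and the poset $\Lambda_{\wr}$ is ordered lexicographically: first by $\lambda$, then by $L$.

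For the involution axiom, $\iota_{\wr}$ reflects the underlying $\mcS$-diagram and applies $\widetilde{\iota}$ to each attached label. Reflection interchanges the lower and upper half diagrams, hence the $S$- and $T$-components coming from $\mcS$, and on each strand reverses the product order of the $\mcA$-labels; since $\widetilde{\iota}(\widetilde{C}^{L_i}_{\widetilde{S}_i \widetilde{T}_i}) = \widetilde{C}^{L_i}_{\widetilde{T}_i \widetilde{S}_i}$ and $\iota$ satisfies the corresponding property for $\mcS$, one obtains $\iota_{\wr}(C_{\wr}^{(\lambda,L)})$ equal to the same element with $S$- and $T$-indices swapped, which is exactly axiom~(1). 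That $\iota_{\wr}$ is an algebra anti-involution follows because concatenation of diagrams is reversed by reflection and multiplication along strands in $\mcA$ is reversed by $\widetilde{\iota}$.

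The heart of the argument, and the step I expect to be the main obstacle, is the triangularity axiom~(\ref{cell_basis_triangular}). Left multiplication by a diagram $a$ acts in two coupled ways: the $\mcS$-part may fuse strands and thereby destroy propagating blocks, and the $\mcA$-labels along each newly connected strand get multiplied. If the number of propagating blocks drops, then $\lambda$ strictly decreases in $\Lambda$ and the result lies in $(\mcA\wr\mcS)^{<(\lambda,L)}$ regardless of the labels; this is the analog of the ``act by $0$ when defects decrease'' phenomenon used for $\mcP_n(\beta)$ in Section~\ref{sec:crt}. When the propagating blocks are preserved, the action becomes, on each defect strand, a left multiplication in $\mcA$ on the cellular basis element $\widetilde{C}^{L_i}_{\widetilde{S}_i \widetilde{T}_i}$; invoking the cellular triangularity of $\mcA$ rewrites this as a $T$-independent combination of $\widetilde{C}^{L_i}_{\widetilde{U}_i \widetilde{T}_i}$ modulo $\mcA^{<L_i}$, and the latter lowers the $L$-component while keeping $\lambda$ fixed---precisely what the lexicographic order on $(\lambda,L)$ is designed to absorb. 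The delicate points I would need to check carefully are (i) that the coefficients $r_a$ depend only on the $S$-side data and not on the top index $T$, which holds because the multiplication only touches the lower half diagram and the left ends of the strands, and (ii) that the interaction between defect strands and non-defect blocks---when a non-defect block is absorbed into a defect during multiplication without changing $D(\lambda)$---is correctly accounted for by treating the non-defect labels via the free factor $(\bigsqcup\widetilde{M})^{N(\lambda)}$. Once these are established, the construction is an instance of iterated inflation in the sense of~\cite{GP18,Xi99}, and cellularity follows; the general case of arbitrary $\mcS$ with blocks of size at most $2$ then follows by restricting this datum, using Proposition~\ref{prop:subcellular} together with the $\iota$-invariance of the diagram basis.
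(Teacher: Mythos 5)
Your proposal is correct and follows essentially the same route as the paper's own proof: a direct verification of the two cellular axioms, using that multiplication in $\mcS$ cannot increase the number of propagating blocks (so defect-destroying products fall into lower order terms in $\lambda$), applying the cellularity of $\mcA$ strand-by-strand to absorb the remaining terms into lower order in $L$ under the lexicographic order, defining $\iota_{\wr}$ by reflection combined with $\widetilde{\iota}$ on the labels, and handling general $\mcS$ by restriction via Proposition~\ref{prop:subcellular}. The paper compresses your two delicate points (i) and (ii) into an appeal to the Kazhdan--Lusztig basis properties and the view of the partial permutation as a partial mapping $\bigsqcup \widetilde{M} \to \bigsqcup \widetilde{M}$, but the substance is the same.
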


\begin{proof}
Consider some $\lambda_{\wr} \in \Lambda_{\wr}$ and $S_{\wr}, T_{\wr} \in M_{\wr}(\lambda, L)$.
We have $\iota_{\wr}(C^{\lambda}_{S_{\wr}T_{\wr}}) = C^{\lambda}_{T_{\wr}S_{\wr}}$ by construction.
For every $\lambda \in \Lambda$, $S,T \in M(\lambda)$, and $a \in \mcA$, we have
\[
a C^{\lambda_{\wr}}_{S_{\wr}T_{\wr}} = \sum_{U,V \in M(\lambda, L)} r_a(U,S_{\wr},V) C^{\lambda_{\wr}}_{UV} + \mcA^{<\lambda_{\wr}},
\]
since we attach elements of the cellular basis of $\mcA$ to each strand and that multiplication in $\mcS$ cannot increase the number of propagating blocks.
It remains to show that we must have $V = T_{\wr}$ and $r_a(U,S_{\wr},V)$ does not depend on $T$.
Indeed, this follows from the fact that we can think of the (partial) permutation as a (partial) mapping from $\bigsqcup \widetilde{M} \to \bigsqcup \widetilde{M}$, the multiplication properties of cellular basis elements of $\mcA$ to each strand, and properties of the Kazhdan--Lusztig basis (see, \textit{e.g.},~\cite[Ex.~1.2]{GL96}).
The restriction is Proposition~\ref{prop:subcellular}.
\end{proof}

Our proof is essentially a mild generalization of the proof of~\cite[Thm.~5.5]{GL96}.
In fact, this suggests that there should be a good set of commutative elements that would play the role of the Jucys--Murphy elements in a Hecke algebra analog (\textit{cf.}~\cite[Sec.~2.2]{Mathas04}).

\begin{problem}
Show this can be extended to the Hecke algebra action on the diagrams of $\mcS$ (instead of $\field [\Sym_n]$).
Furthermore, construct a set of elements $L_2, \dotsc, L_N$ such that:
\begin{itemize}
    \item they generate an abelian subalgebra,
    \item there is a basis given by monomials in $L_i$ times a Hecke algera basis element $T_w$, and
    \item the center contains all symmetric polynomials in $L_2, \dotsc, L_N$.
\end{itemize}
\end{problem}

Towards this, we note that the generalized rook monoid~\cite{Steinberg08} is constructed as the wreath product $\ZZ_r \wr R_n$, where $R_n$ is the rook monoid.
Thus, our cellular wreath product says the corresponding monoid algebra is a cellular algebra by using the rook algebra $\mcR_n(\beta)$, and Jucys--Murphy elements (for $\beta = 1$) were recently constructed in~\cite{MS22}.

For the product to be well-defined, it is necessary that the blocks have the same size.
We have used the particular case that each block has size $2$, but this could be extended for the block partition algebra by associating to each block of size $k$ a bundle of $k$ (ordered) strands.
In this case, we get a natural Hecke algebra analog by taking the corresponding product of (type $A$) Hecke algebras, analogous to how we have used a product of symmetric group algebras (equivalently the group algebra of a product of symmetric groups).

Let us remark on two other constructions based on wreath products.
The first is the coloring of the partition algebras by a finite group $G$ in the work by Bloss~\cite{Bloss03} to understand Schur--Weyl duality with the wreath product $G \wr \Sym_m$.
When we restrict to subalgebras of the rook Brauer algebra, we obtain our construction.
The second is the ramified partition algebra of Martin and Elgamal~\cite{ME04}, which was further studied in~\cite{Martin11} using the symmetric group and the partition algebra.
However, those constructions are distinct from $\field [\Sym_n] \wr \mcP_n(\beta)$ since this has the same dimension as $\field [\Sym_n] \otimes \mcP_n(\beta)$ but they are not isomorphic for $n  > 1$ since there are more irreducible representations of $\field [\Sym_n] \wr \mcP_n(\beta)$ (\textit{e.g.}~$12$ for $n = 2$) than $\field [\Sym_n] \otimes \mcP_n(\beta)$ (resp. $6$).
Moreover, the algebra $\mcP_n^{\ltimes}(\beta)$ in~\cite{Martin11} clearly has smaller dimension than $\field [\Sym_n] \otimes \mcP_n(\beta)$.

It is clear that the wreath product construction extends to graded cellular algebras defined in~\cite{HM10}.
It is expected that the wreath product construction will extend to generalizations of cellular algebras, such as affine cellular algebras~\cite{KX12}, the recently defined skew graded cellular algebras~\cite{HMR21}.
It should also work by replacing the underlying diagram algebra; for example, the base algebra could use the blob algebra $\mcB_n(\beta, \gamma, \delta)$, generalizations of the Temperley--Lieb algebra (see, \textit{e.g.},~\cite{BCF22} and references therein), the higher genus diagram algebras~\cite{TV21}, the BMW algebra~\cite{BW89,Murakami87} or the associated tangle algebra~\cite{FG95}, or quiver Hecke algebras (also known as Khovanov--Lauda--Rouquier (KLR) algebras~\cite{KL09,Rouquier08})~\cite{HM10}.

One important algebra could be the cellular wreath product of the symmetric group with itself.
Indeed, the cell modules for this are given by composing a $\Sym_n$-representation with a $\Sym_k$-representation.
This would be a restriction of the corresponding $\GL_n$-representation with a $\GL_k$-representation that defines the plethysm $s_{\lambda}[s_{\mu}]$.
This leads to the following problem.

\begin{problem}
\label{prob:plethysm}
Determine the relationship between the representation theory of $\field [\Sym_n] \wr \field [\Sym_k]$ and the plethysm coefficients $a_{\lambda\mu}^{\nu}$ given by $s_{\lambda}[s_{\mu}] = \sum_{\nu} a_{\lambda\mu}^{\nu} s_{\nu}$.
\end{problem}

We refer the reader to~\cite{COSSZ22} for some recent information on plethysm coefficients.

\bibliographystyle{alpha}
\bibliography{cellular}

\end{document}